\crefname{subsection}{Section}{Sections}
\newcommand\eps{\varepsilon}
\newcommand\FF{\mathcal{F}}
\newcommand\MM{\mathcal{M}}
\newcommand\OO{\mathcal{O}}
\renewcommand\SS{\mathcal{S}}
\newcommand\VV{\mathcal{V}}
\newcommand\WW{\mathcal{W}}
\def\set#1#2{\big\{ #1 \, \big| \, #2 \big\}}
\crefname{hypothesis}{Hypothesis}{Hypotheses}
\title{Singular perturbation analysis of a regularized MEMS model\thanks{\funding{This work was funded by the Fonds zur F\"orderung der 
wissenschaftlichen Forschung (FWF) via the doctoral school ``Dissipation and Dispersion in Nonlinear PDEs'' (project number W1245).}}}
\author{Annalisa Iuorio\thanks{Institute for Analysis and Scientific Computing, 
Vienna University of Technology, Austria
  (\email{annalisa.iuorio@tuwien.ac.at}).}
\and Nikola Popovic\thanks{School of Mathematics and Maxwell Institute for Mathematical Sciences, University of Edinburgh, United Kingdom
  (\email{nikola.popovic@ed.ac.uk}, \url{http://www.maths.ed.ac.uk/~npopovic/}).}
\and Peter Szmolyan\thanks{Institute for Analysis and Scientific Computing, 
Vienna University of Technology, Austria
  (\email{peter.szmolyan@tuwien.ac.at}).}}
\begin{document}

\maketitle

% REQUIRED
\begin{abstract}
Micro-Electro Mechanical Systems (MEMS) are defined as very small structures that combine electrical and mechanical components on a common substrate. Here, the electrostatic-elastic case is considered, where an elastic membrane is allowed to deflect above a ground plate under the action of an electric potential, whose strength is proportional to a parameter $\lambda$. Such devices are commonly described by a parabolic partial differential equation that contains a singular nonlinear source term. The singularity in that term corresponds to the so-called ``touchdown" phenomenon, where the membrane establishes contact with the ground plate. Touchdown is known to imply the non-existence of steady-state solutions and blow-up of solutions in finite time.

We study a recently proposed extension of that canonical model, where such singularities are avoided due to the introduction of a regularizing term involving a small ``regularization" parameter $\varepsilon$.
Methods from dynamical systems and geometric singular perturbation theory, in particular the desingularization technique known as ``blow-up", allow for a precise description of steady-state solutions of the regularized model, as well as for a detailed resolution of the resulting bifurcation diagram. The interplay between the two principal model parameters $\eps$ and $\lambda$ is emphasized; in particular, the focus is on the singular limit as both parameters tend to zero.
\end{abstract}

% REQUIRED
\begin{keywords}
Micro-Electro Mechanical Systems, touchdown, boundary value problem, regularization, bifurcation diagram, saddle-node bifurcation, geometric singular perturbation theory, blow-up method
\end{keywords}

% REQUIRED
\begin{AMS}
34B16, 34C23, 34E05, 34E15, 34L30, 35K67, 74G10
\end{AMS}

\section{Introduction}\label{sec:intro}
Micro-Electro Mechanical Systems (MEMS) are very small structures that combine electrical
and mechanical components on a common substrate to perform various tasks. 
In particular, electrostatic-elastic devices have found important applications in 
drug delivery~\cite{Ts07}, micro pumps~\cite{Iv08}, optics~\cite{Do04}, and 
micro-scale actuators~\cite{Wa09}. In these devices, an elastic 
membrane is allowed to deflect above a ground plate under the action of an electric potential 
$V$, where the distance between plate and membrane is typically much smaller than their diameter; see \cref{fig:eem}. When a
critical voltage threshold $V^\ast$ (``\emph{pull-in voltage}'') is reached, a phenomenon called 
\emph{touchdown} or \emph{snap-through} can occur, {\it i.e.}, the membrane touches the ground plate, which
may cause a short circuit. 

\begin{figure}[!ht]
\centering
\includegraphics[scale=1.0]{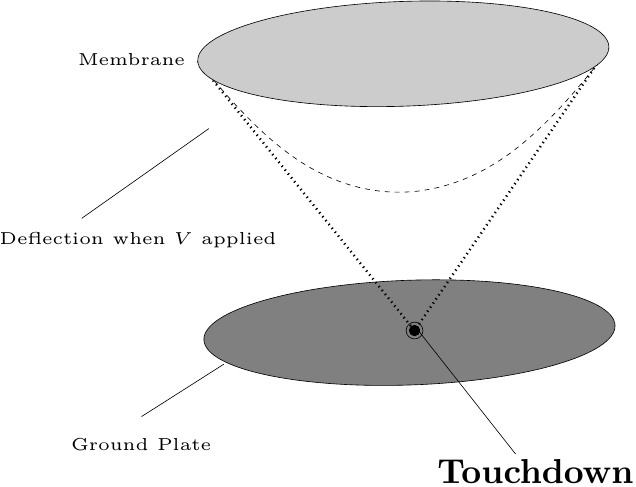}
\caption{Schematic representation of an electrostatic-elastic MEMS device. The elastic membrane deflects towards the ground plate when an electric potential $V$ is applied (dashed curve). If $V$ exceeds a critical value $V^*$
(the so-called ``pull-in voltage''), the membrane touches the ground plate, causing touchdown (dotted line).}
\label{fig:eem}      
\end{figure}

The physical forces 
acting between the elastic components of the device -- which can, {\it e.g.}, be of Casimir or Van der 
Waals type -- may lead to \emph{stiction}, which causes complications in reverting the process in order to 
return to the original state. In the canonical mathematical models proposed in the 
literature~\cite{GW05, LY07, Pe02, PB02}, such systems are described by partial differential equations involving the Laplacian or the bi-Laplacian and a singular source term. The touchdown phenomenon leads to 
non-existence of steady states, or blow-up of solutions in finite time, or both. Hence, no information on 
post-touchdown configurations can be captured by these models.

Recently, an extension of the canonical model has been proposed, where the introduction of a 
potential mimicking the effect of a thin insulating layer above the ground plate prevents physical 
contact between the elastic membrane and the substrate \cite{Li14}.
Mathematically, a nonlinear source term that depends on a small ``regularization'' parameter $\eps$ is added to the partial differential equation. The resulting regularized models 
have been studied in relevant work by Lindsay {\it et al.}; see {\it e.g.} \cite{Li14,Li15} for the membrane case, while the case where the elastic structure is modelled as a beam is  
discussed in \cite{LL12,Li14,Li15}. In one spatial dimension, the governing equations are given by
\begin{align}
\begin{split}
u_t &=u_{xx}-\frac\lambda{(1+u)^2} + \frac{\lambda \eps^{m-2}}{(1+u)^{m}} \\
& \quad \text{for }x\in [-1,1],\text{ with }u=0\text{ when }x=\mp 1\quad (\text{membrane}) \\\label{LiLap}
\end{split}
\end{align}
and
\begin{align}
\begin{split}
u_t &=-u_{xxxx}-\frac\lambda{(1+u)^2} + \frac{\lambda \eps^{m-2}}{(1+u)^{m}} \\
& \quad \text{for }x\in [-1,1], \text{ with }u = \partial_n u = 0\text{ when }x=\mp 1\quad (\text{beam}), \label{LiBiLap}
\end{split}
\end{align}
respectively.
Physically speaking, the variable $u$ denotes the (dimensionless) deflection of the surface, while the 
parameter $\lambda$ is proportional to the square of the applied voltage $V$. The regularizing
term $\lambda \eps^{m-2}(1+u)^{-m}$ with $\eps>0$ and $m>2$, as introduced in \cite{Li14}, accounts for various physical effects that are 
of particular relevance in the vicinity of the ground plate, {\it i.e.}, at $u=-1$; that term induces a 
potential which simulates the effect of an insulating layer whose 
non-dimensional width is proportional to $\eps$. In the following, 
we will consider $m=4$, which corresponds to 
a Casimir effect; alternative choices describe other physical phenomena and can be
studied in a similar fashion. 

Here, we focus on steady-state solutions of the Laplacian case corresponding to a membrane; see \Cref{LiLap}:
\begin{align}\label{eq-2}
u_{xx}=\frac\lambda{(1+u)^2}\bigg[1-\frac{\eps^2}{(1+u)^2}\bigg]\qquad\text{ for }x\in[-1,1],
\text{ with }u=0\text{ when }x=\mp1.
\end{align}
For literature on the bi-Laplacian case, \Cref{LiBiLap}, we refer to~\cite{Li14,Li16,Li15}.

\begin{remark}\label{rem:even}
Due to the symmetry of the boundary value problem \cref{eq-2} under the transformation 
$x\mapsto -x$, all solutions thereof must be even; the proof is straightforward, and is omitted here.
\end{remark}
Before addressing the novel features of the regularized model which are the focus of the 
present article, we briefly summarize the main properties of the non-regularized case 
corresponding to $\eps=0$ in \cref{eq-2}, which are well 
understood~\cite{Pe02,PB02}. The numerically computed bifurcation diagram associated to~\cref{eq-2} for $\eps=0$ is shown in 
\cref{fig:Lin0:a}; it contains two branches of steady-state solutions, where the 
lower branch is stable and 
the upper one is unstable. The upper branch limits on the $\Vert u \Vert_2^2$-axis in the point $B=\left( 0, \frac{2}{3} \right)$, 
which plays a crucial role in the bifurcation diagram of the regularized problem. The two branches are separated by a fold point that is located at 
$\lambda=\lambda^\ast$. For $\lambda > \lambda^\ast$, steady-state solutions of~\cref{LiLap} 
cease to exist, with the transient dynamics leading to a blow-up in finite time. 
Sample solutions along the two branches are plotted in \cref{fig:Lin0:b}; in addition, the piecewise
linear singular solution corresponding to the point $B$ is shown. That singular solution undergoes
touchdown at $x=0$.

\begin{figure}[!ht]
\centering
\hspace{-1cm}
\subfigure{ %Bifurcation diagram.
\includegraphics[scale=0.6]{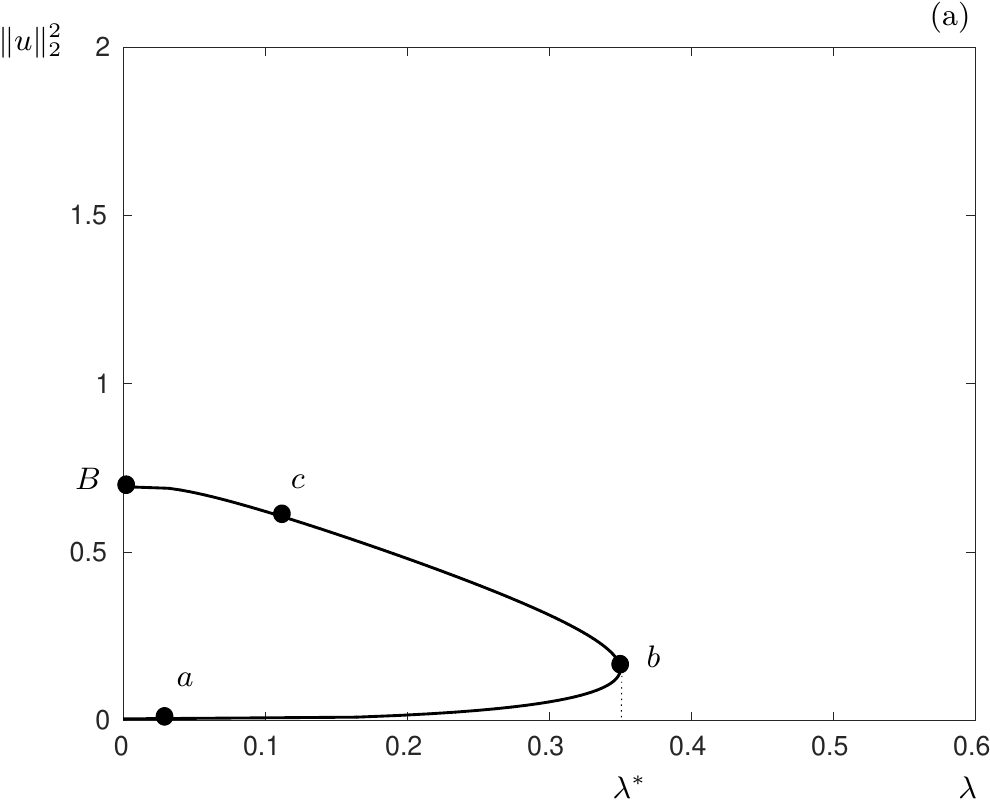}
\label{fig:Lin0:a} 
}
\hspace{.4cm}
\subfigure{ %Solutions in $(x,u)$-space.
\includegraphics[scale=0.6]{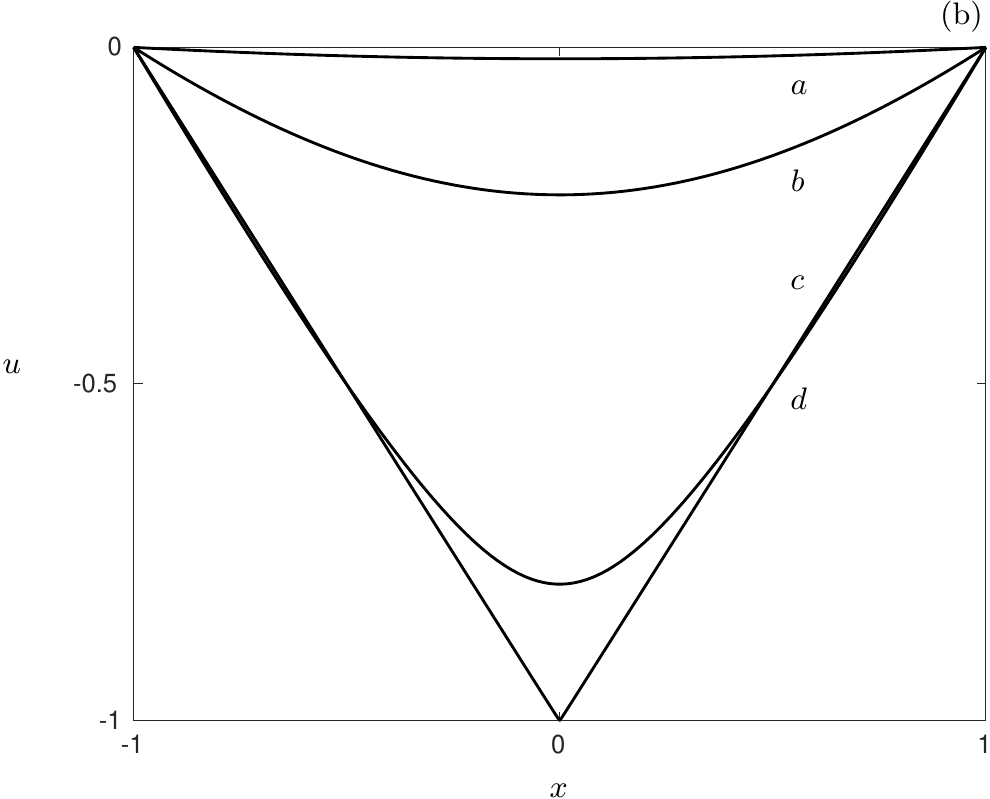}
\label{fig:Lin0:b} 
}
\caption{(a) Bifurcation diagram of the membrane model, \Cref{eq-2}, for $\eps=0$.
The lower and upper branches consist of stable and unstable steady-state solutions, respectively.
The solution labeled $d$ corresponds to the point $B$ and represents the singular solution for $\lambda=0$.
(b) Corresponding solutions in $(x,u)$-space.}
\label{fig:Lin0}      
\end{figure}

The inclusion of the $\eps$-dependent regularizing term, where $0<\eps\ll 1$, considerably alters 
the structure of the bifurcation diagram in \cref{fig:Lin0:a}. The principal new feature is the 
emergence of a third branch of stable steady-state solutions, resulting in the $S$-shaped curve 
shown in \cref{fig:Lin:a}; that diagram was established numerically and via matched asymptotics in~\cite{Li14}. 
In addition to the fact that the fold point at $\lambda^\ast$ now depends on 
$\eps$, there exists another fold point at $\lambda_\ast$ -- which is also $\eps$-dependent -- such 
that, for $\lambda_\ast < \lambda < \lambda^\ast$, there are three branches of steady states, the 
middle one of which is unstable. Solutions on that newly emergent branch are in fact 
bounded below by $u=-1+\eps$. With increasing $\lambda$, 
solutions exhibit a growing ``flat'' portion close to $u=-1+\eps$; cf.~the solution labeled $d$ 
in~\cref{fig:Lin:b}. For $\lambda < \lambda_\ast$ and $\lambda > \lambda^\ast$, there exists a unique stable
steady state; in particular, and in contrast to
the non-regularized case, numerical simulations indicate that a stable steady state exists
for every value of $\lambda>0$. 
% \begin{remark}
% Since \Cref{eq-2} only depends on $\eps^2$, the bifurcation diagram in \cref{fig:Lin:a} is
% invariant under the reflection $\eps\mapsto -\eps$; however, for physical reasons, we only consider
% $\eps$ positive here.
% \end{remark}

\begin{figure}[!ht]
\centering
\subfigure{ %Bifurcation diagram.
\includegraphics[scale=0.8]{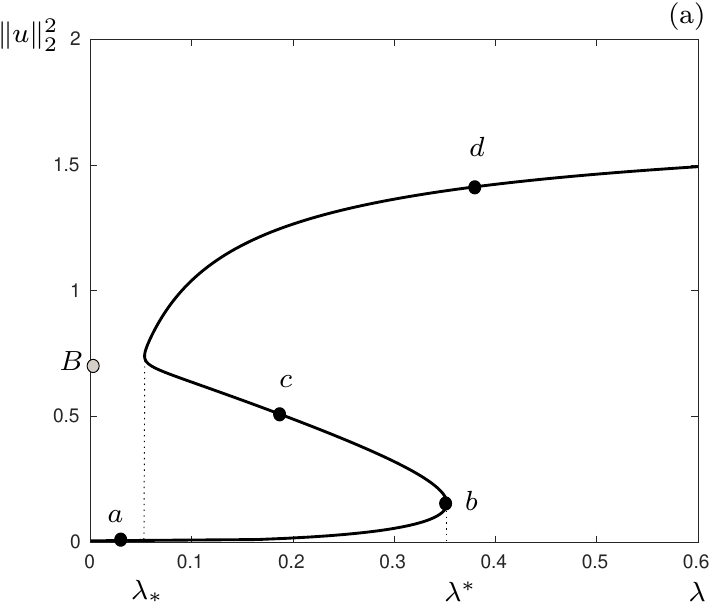}
\label{fig:Lin:a} 
}
\hspace{.5cm}
\subfigure{ %Solutions in $(x,u)$-space.
\includegraphics[scale=0.8]{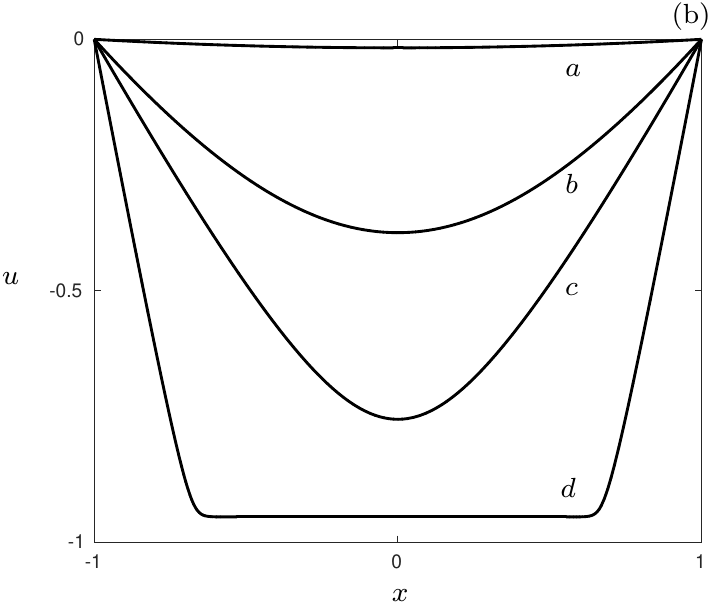}
\label{fig:Lin:b} 
}
\caption{(a) Numerically computed bifurcation diagram of the one-dimensional membrane model, \Cref{eq-2}, for $\eps=0.05$. The gray circle indicates the point $B=\left(0,\frac23\right)$.
(b) Corresponding solutions in $(x,u)$-space.}
\label{fig:Lin}      
\end{figure}

% \begin{remark}
% \cref{fig:Lin0,fig:Lin} show results of numerical simulations, reproducing those presented in~\cite{Li14}.
% \end{remark}

For very small values of $\eps$, the bifurcation diagram in \cref{fig:Lin:a} is difficult to resolve, even 
numerically. These difficulties are particularly prominent in the vicinity of the upper branch and the fold point at $\lambda_\ast(\eps)$; see, {\it e.g.},~\Cref{eq-normu} and \cref{rem-slope} for details. The highly singular nature of the bifurcation diagram in \cref{fig:Lin:a}, as well as the influence of the 
regularization parameter $\eps$ on the structure thereof, are the principal features of interest to us here. \\
In the present work, we will give a detailed geometric analysis of \Cref{eq-2} for small values of $\varepsilon$; in particular, we will prove that the (numerically computed) bifurcation diagram, as shown in \cref{fig:Lin:a}, is correct. Moreover, we will explain the underlying structure of that diagram. In summary, our main result can be expressed as follows:
\begin{theorem}\label{thm-1}
For $\eps\in(0,\eps_0)$, with $\eps_0 > 0$ sufficiently small, and 
\mbox{$\lambda\in[0,\Lambda]$}, with $\Lambda=\OO(1)$ positive and fixed, the bifurcation 
diagram for the boundary value problem \cref{eq-2} has the following properties:
\begin{itemize}
 \item[(i)] In the $(\lambda, \Vert u \Vert_2^2)$-plane, the set of solutions to \cref{eq-2} corresponds to an $S$-shaped curve emanating from the origin. The curve consists of three branches -- lower, middle, and upper -- that are separated by two fold points which are located at $\lambda=\lambda_\ast(\varepsilon)$ and $\lambda=\lambda^\ast(\varepsilon)$. Specifically,
there exists one steady-state solution to \cref{eq-2} for $\lambda<\lambda_\ast(\varepsilon)$ and $\lambda>\lambda^\ast(\varepsilon)$, while for $\lambda_\ast(\varepsilon)<\lambda< \lambda^\ast(\varepsilon)$, there exist three steady-state solutions.
 \item[(ii)] Along the lower and upper branches in \cref{fig:Lin:a}, $\Vert u \Vert_2^2$ is a strictly increasing function of $\lambda$, whereas $\Vert u \Vert_2^2$ is a decreasing function of $\lambda$ along the middle branch.
 \item[(iii)] The function $\lambda_\ast(\varepsilon)$ is $C^1$ in $\varepsilon$ and smooth as a function of $(\varepsilon, \ln \varepsilon)$, and admits the expansion
 \begin{align*}
  \lambda_\ast(\varepsilon) = \frac34\eps-\bigg(\sqrt{\frac32}+\frac98\bigg)\eps^2\ln\eps+ \OO(\eps^2).
 \end{align*}
  Moreover, $\lambda^\ast(\varepsilon)$ is smooth in $\varepsilon$ and admits the expansion
 \begin{align*}
  \lambda^\ast (\varepsilon) = \lambda_0^\ast + \lambda_1^\ast \varepsilon^2 + \mathcal{O}(\varepsilon^4),
 \end{align*}
 with appropriately chosen coefficients $\lambda_0^\ast$ and $\lambda_1^\ast$.
 \item[(iv)] Outside of a fixed neighborhood of the point $B$, the lower and middle branches in \cref{fig:Lin:a} are smooth perturbations of the non-regularized bifurcation curve illustrated in \cref{fig:Lin0:a}, while the upper branch has the following expansion:
 \begin{equation} \label{eq-normu}
  \Vert u \Vert_2^2 = 2 \bigg(1-\frac{\sqrt3}{3}\sqrt{\frac{\eps}{\lambda}} - 2 \eps + \OO(\eps^{\frac32} \ln\eps)\bigg).
 \end{equation}
\end{itemize}
\end{theorem}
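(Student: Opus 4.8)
The plan is to exploit the conservative (Hamiltonian) structure of the steady-state equation \cref{eq-2} together with the evenness guaranteed by \cref{rem:even}, and then to resolve the resulting singular integral by geometric means. First I would write the second-order equation as the planar system $u' = v$, $v' = f(u)$ with $f(u) = \lambda(1+u)^{-2}\big[1 - \eps^2(1+u)^{-2}\big]$, which is autonomous and hence admits the first integral $H(u,v) = \tfrac12 v^2 + \lambda(1+u)^{-1} - \tfrac13\lambda\eps^2(1+u)^{-3}$. By \cref{rem:even} every solution satisfies $u'(0)=0$, so it lies on the level set through its center value; writing $a := 1 + u(0)$ for the minimal gap and $s := 1+u$, the boundary condition $u(1)=0$ becomes the time-of-flight relation
\begin{equation*}
  \sqrt{2\lambda} = G(a,\eps) := \int_a^1 \frac{ds}{\sqrt{\Phi(s;a,\eps)}}, \qquad \Phi(s;a,\eps) := \Big(\tfrac1a - \tfrac1s\Big) - \tfrac{\eps^2}{3}\Big(\tfrac1{a^3}-\tfrac1{s^3}\Big),
\end{equation*}
with the companion formula $\Vert u\Vert_2^2 = \tfrac{2}{G(a,\eps)}\int_a^1 (s-1)^2\,\Phi(s;a,\eps)^{-1/2}\,ds$. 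This reduces the entire bifurcation diagram to the single-parameter family $a \mapsto \big(\lambda(a,\eps),\Vert u\Vert_2^2(a,\eps)\big)$ with $a \in (\eps,1)$, the lower endpoint being forced by the requirement $\Phi>0$ on $(a,1)$, equivalently that the center lie above the saddle equilibrium $u=-1+\eps$ of the flow (since $\partial_s\Phi = (s^2-\eps^2)s^{-4}$ changes sign at $s=\eps$).

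With this reduction in hand, I would split the parameter range into a regular and a singular regime. For $a$ bounded away from $\eps$, the integrand depends analytically on $\eps^2$, so $\lambda(a,\eps)$ and $\Vert u\Vert_2^2(a,\eps)$ are smooth $\eps^2$-perturbations of their $\eps=0$ counterparts; the classical non-regularized branches of \cref{fig:Lin0:a} persist, which gives the lower and middle branches of (i)--(ii) away from $B$ and the smoothness-in-$\eps^2$ statement for $\lambda^\ast(\eps)$ in (iii). The upper fold $\lambda^\ast(\eps)$ itself I would obtain by the implicit function theorem applied to $\partial_a G(a,\eps)=0$ at the classical fold of the $\eps=0$ problem, yielding $\lambda^\ast = \lambda_0^\ast + \lambda_1^\ast\eps^2 + \OO(\eps^4)$.

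The substance of the proof lies in the touchdown regime $a \to \eps^+$, where the saddle $u=-1+\eps$ collides with the singularity $u=-1$ of the vector field as $\eps\to 0$, and where $\Phi$ develops a double zero so that $G$ diverges logarithmically. Here I would pass to the extended phase space $(u,v,\eps)$ and desingularize the degenerate point $(1+u,v,\eps)=(0,0,0)$ by a weighted blow-up, the weights being read off from the inner balance $1+u = \eps z$, under which $f = \lambda\eps^{-2}z^{-4}(z^2-1)$ and $v = \OO(\sqrt{\lambda/\eps}\,)$. Analyzing the blown-up field chart by chart -- in particular tracking the stable and unstable manifolds of the blown-up saddle and the passage through the rescaling chart -- delivers uniform asymptotics for $G(a,\eps)$ and for the companion integral as $a\downarrow\eps$. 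The logarithmic contributions ($\eps^2\ln\eps$ in $\lambda_\ast$ and $\eps^{3/2}\ln\eps$ in \cref{eq-normu}) arise, as is typical for blow-up, from a resonance in the transition chart, and they account precisely for the claim that $\lambda_\ast(\eps)$ is only $C^1$ in $\eps$ while being smooth as a function of $(\eps,\ln\eps)$.

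Finally I would assemble the pieces. The uniform asymptotics show that $a\mapsto\Vert u\Vert_2^2$ is strictly monotone across the whole range (giving (ii) once the branches are labelled) and that $a\mapsto\lambda(a,\eps)$ possesses exactly one interior maximum ($\lambda^\ast$) and one interior minimum ($\lambda_\ast$), which produces the $S$-shape and the solution count of (i); evaluating the minimum then gives $\lambda_\ast(\eps) = \tfrac34\eps - (\sqrt{3/2}+\tfrac98)\eps^2\ln\eps + \OO(\eps^2)$, and expanding $\Vert u\Vert_2^2$ on the upper branch gives \cref{eq-normu}. The hard part will be the count of critical points of $\lambda(a,\eps)$ through the singular regime: proving that $\partial_a\lambda$ changes sign exactly twice requires controlling $\partial_a G$ uniformly as both $a$ and $\eps$ tend to zero, which is exactly what the blow-up is designed to deliver, together with the delicate extraction of the logarithmic coefficients from the matching.
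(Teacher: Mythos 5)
Your reduction is correct, and it takes a genuinely different route from the paper. You exploit the fact that the original first-order system \cref{eq-3} is conservative: the first integral $H$ you write down is indeed preserved, and by \cref{rem:even} every solution crosses $\{u'=0\}$ at $x=0$, so the full solution set of \cref{eq-2} is parametrized by the midpoint gap $a=1+u(0)\in(\eps,1)$ through the time map $\sqrt{2\lambda}=G(a,\eps)$; all of (i)--(iv) then become statements about critical points and asymptotics of explicit integrals. The paper never uses this global quadrature structure (it observes the Hamiltonian structure only for the non-regularized limit \cref{eq:nrm}); instead it desingularizes the vector field by the factor $(1+u)^4$, blows up the non-hyperbolic manifold $\SS^0$, and constructs the branches by shooting in the charts $K_1$ and $K_2$, using the Exchange Lemma, normal forms, and three overlapping regions (\cref{prop-1,prop-2,prop-3}). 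What your route buys: existence, uniqueness, and the curve structure of the diagram come essentially for free, since each admissible $a$ determines exactly one candidate solution; no transversality or Exchange Lemma arguments are needed; and the folds $\lambda_\ast,\lambda^\ast$ are literally the interior minimum and maximum of the scalar function $a\mapsto\tfrac12 G(a,\eps)^2$. What the paper's route buys: it does not rely on integrability at all, so it extends to the bi-Laplacian (beam) model and to radially symmetric higher-dimensional versions, where no time map exists; and it exposes the dynamical origin of the logarithmic terms (a resonance in chart $K_1$ plus the saddle passage in $K_2$), which in your formulation reappear as the logarithmic divergence of $G$ as $a\downarrow\eps$.

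Two caveats on what remains to be done in your plan. First, essentially all of the quantitative content of \cref{thm-1} --- the coefficients $\tfrac34$ and $\sqrt{3/2}+\tfrac98$ in (iii), the expansion \cref{eq-normu}, and above all the global count of critical points of $\lambda(\cdot,\eps)$ --- lives in the uniform asymptotics of $G(a,\eps)$ and $\partial_a G(a,\eps)$ as $a$ and $\eps$ tend to zero jointly; you correctly identify this as the hard part but defer it, and it is comparable in labor to the paper's three-region analysis. Note also that your ``regular regime'' requires $\eps^2/a^2\ll1$ (the relative size of the $\eps^2$-correction in $\Phi$ near $s=a$), not merely $a$ bounded away from $\eps$; for the lower and middle branches away from $B$ this is harmless since $a$ is bounded below there, but the intermediate regime $\eps\lesssim a\ll1$ still has to be resolved by the blow-up/matching step. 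Second, statement (ii) requires strict monotonicity of $a\mapsto\Vert u\Vert_2^2(a,\eps)$, i.e., monotonicity of the ratio $\int_a^1(s-1)^2\Phi^{-1/2}\,{\rm d}s\big/\int_a^1\Phi^{-1/2}\,{\rm d}s$ in $a$; this is plausible and provable, but it is asserted rather than proved in your outline and does not follow formally from the asymptotics alone in the overlap regions.
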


\medskip
% 
% \new{
% \begin{remark}
% Our analysis suggests that the expansion for the upper solution branch in \Cref{eq-normu} is still valid outside of an $\OO(\eps)$-neighborhood of the point $B$.
% \end{remark}
% }

The detailed asymptotic resolution of the bifurcation diagram associated to the boundary value problem~\cref{eq-2}, carried out in the proof of~\Cref{thm-1}, is accomplished through separate investigation of three distinct, yet overlapping, regions in the diagram, both in the singular limit of $\eps=0$ and for $\eps$ positive and sufficiently small.
To that end, we first reformulate \cref{eq-2} in a dynamical systems framework; 
then, identification of two principal parameters in the resulting equations yields a two-parameter singular 
perturbation problem. Careful asymptotic analysis of that problem will allow us to identify the corresponding
limiting solutions, and to show how the third branch in the diagram found for non-zero $\eps$ emerges from the singular limit of $\eps=0$. On that basis, we will prove the 
existence and uniqueness of solutions close to these limiting solutions. While the three regions 
in the diagram share some common features, they need to be investigated 
separately for the structure of the diagram to be fully resolved.

Our analysis is based on a variety of dynamical systems techniques and, principally, on geometric 
singular perturbation theory \cite{Fe79,GSPT,Kue} and the blow-up method, or 
``geometric desingularization''~\cite{Du93,DR96,KS01}. In particular, a combination of 
these techniques will allow us to 
perform a detailed study of the saddle-node bifurcation at the fold point at $\lambda_\ast$, and to obtain an asymptotic expansion (in $\eps$) for $\lambda_\ast(\eps)$. While such an expansion has been derived 
by Lindsay via the method of matched asymptotic expansions~\cite{Li14}, 
cf.~Figure~12 therein, as well as our \cref{fig:Lin:a}, the leading-order coefficients in that expansion 
are calculated explicitly here. In the process, it is shown that the occurrence of logarithmic 
switchback terms in the steady-state asymptotics for \Cref{eq-2}, which has also been observed via asymptotic matching in~\cite{Li14}, is due to a resonance phenomenon in one 
of the coordinate charts after blow-up~\cite{Po05, PS041, PS042, SS04}; cf.~\cref{sec:logsw}.

Without loss of generality, we fix $\Lambda=1$ in~\Cref{thm-1}. The proof of \cref{thm-1} follows from a combination of \cref{prop-1,prop-2,prop-3} below; each of these 
pertains to one of the three above-mentioned regions in the bifurcation diagram.

The article is structured as follows: in \Cref{sec:dynfor}, we reformulate the boundary value problem \cref{eq-2} as a 
dynamical system. In \Cref{sec:blup}, we introduce the principal blow-up transformation on which 
our analysis of the dynamics of \cref{eq-2} close to touchdown is based. In 
\Cref{sec:bifdiag}, we describe in detail the structure of the bifurcation diagram in \cref{fig:Lin:a} by 
investigating separately three main regions therein, as illustrated in
\cref{fig:bdsegm} below. Finally, in \Cref{sec:diou}, we discuss our findings, and we present an 
outlook to future research. 

%----------------------------------------------------------------------------
\section{Dynamical Systems Formulation}\label{sec:dynfor}
%----------------------------------------------------------------------------

For our analysis, we reformulate \Cref{eq-2} as a boundary value problem for a corresponding first-order
system by introducing the new variable $w=u'$; here, it is useful to keep in mind that $w$ represents the
slope of the solution $u$ to \Cref{eq-2}. Moreover, we append the trivial dynamics of 
both the spatial variable $x$, which we relabel as $\xi$, and the regularizing 
parameter $\eps$, to the resulting system:
\begin{subequations}\label{eq-3}
\begin{align}
u' &=w, \\
w' &=\frac\lambda{(1+u)^2}\bigg[1-\frac{\eps^2}{(1+u)^2}\bigg], \\
\xi' &=1, \\
\eps' &=0;
\end{align}
\end{subequations}
here, the prime denotes differentiation with respect to $x$. Next, we multiply the right-hand
sides in \Cref{eq-3} with a factor of $(1+u)^4$, which allows us to desingularize the flow 
near the touchdown singularity at $u=-1$\footnote{That desingularization 
corresponds to a transformation of the independent variable which leaves the phase portrait of 
\cref{eq-3} unchanged for $u>-1$, since the factor $(1+u)^4$ is positive throughout then.}. Finally, 
we define a shift in $u$ via
\begin{equation} \label{eq-shiftu}
\tilde u=1+u,
\end{equation}
which translates that singularity to $\tilde u=0$.

Omitting the tilde and denoting differentiation with respect to the new independent variable 
by a prime, as before, we obtain the system
\begin{subequations}\label{eq-4}
\begin{align}
u' &=u^4w, \label{eq-4a} \\
w' &=\lambda(u^2-\eps^2), \label{eq-4b} \\
\xi' &=u^4, \label{eq-4c} \\
\eps' &=0
\end{align}
\end{subequations}
in $(u,w,\xi,\eps)$-space, with parameter $\lambda$ and subject to the boundary conditions
\begin{align}\label{eq-5}
u=1\qquad\text{for }\xi=\mp1.
\end{align}

Since $\eps$ is small, it seems natural to attempt a perturbative construction of solutions
to the boundary value problem \{\cref{eq-4},\cref{eq-5}\}, which turns out to be non-trivial in spite of the apparent simplicity 
of the governing equations. For $\eps=0$, \Cref{eq-4} can be solved explicitly and
admits degenerate equilibria at $u=0$, which corresponds to the touchdown singularity at $u=-1$
in the original model, \Cref{eq-2}. We denote the resulting manifold of equilibria for
\cref{eq-4} as
\begin{align}\label{S0}
\SS^0=\set{(0,w,\xi,0)}{w\in\mathbb{R},\ \xi\in\mathbb{R}}.
\end{align}

\begin{figure}[!ht]
\centering
\includegraphics[scale=0.9]{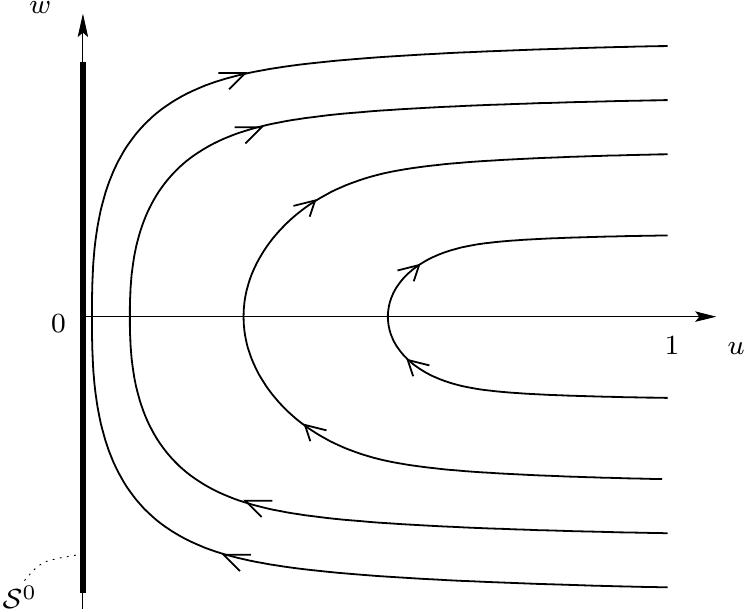}
\caption{Projection of the singular flow of \Cref{eq-4} into $(u,w)$-space
for $\eps=0$ and $\lambda \neq 0$. The solid black line represents the invariant 
manifold $\SS^0$ defined in~\cref{S0}. In view of the boundary conditions in~\cref{eq-5}, 
solutions that originate and terminate at $u=1$ are shown. All such solutions stay to the right of the manifold
$\SS^0$; those with large initial $w$-value tend arbitrarily close to $\SS^0$ without ever reaching it.
Hence, the singular flow is not transverse to $\SS^0$.
}      
\label{fig:SysNTT:a}      
\end{figure}

One complication is introduced by the fact that, for $\lambda \neq 0$, the singular flow of \cref{eq-4}
in $(u,w)$-space that is obtained for $\eps=0$ is not transverse to $\SS^0$; cf.~\cref{fig:SysNTT:a}.
As transversality is a necessary requirement of geometric singular perturbation theory~\cite{Fe79, Kue}, we need to find a way to 
remedy the lack thereof.

For $\lambda=0$ in~\cref{eq-4}, the singular flow becomes even more degenerate; see 
\cref{fig:SysNTT:b}. Furthermore, the set
\begin{align}\label{M0}
\MM^0:=\set{(u,0,\xi,0)}{u\in\mathbb{R}^+,\ \xi\in\mathbb{R}}
\end{align}
now also represents a manifold of equilibria for \Cref{eq-4a,eq-4b}.

\begin{figure}[!ht]
\centering
\includegraphics[scale=0.9]{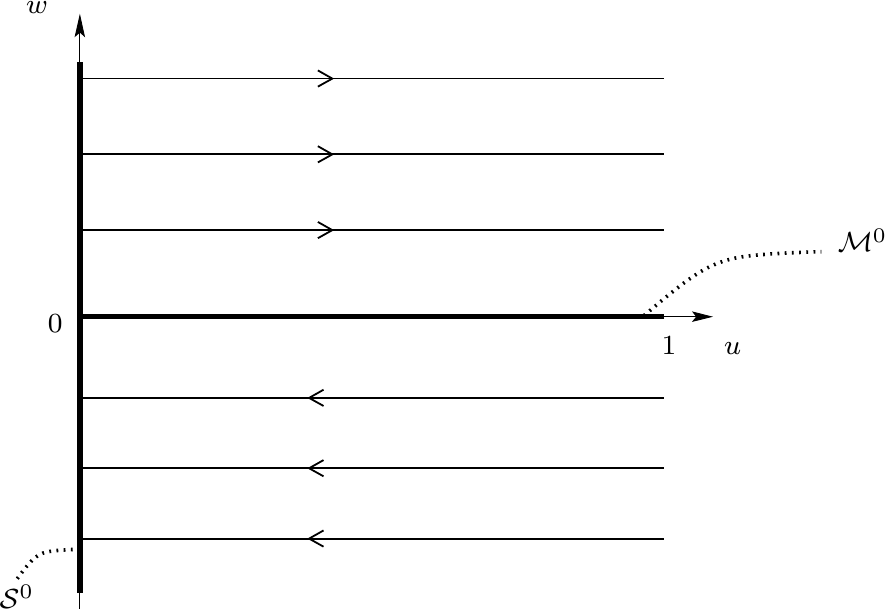}
\caption{Singular flow of \Cref{eq-4} in $(u,w)$-space for $\lambda=0$.
Solid black lines represent the invariant manifolds $\SS^0$ and $\MM^0$ that are
defined in~\cref{S0} and~\cref{M0}, respectively.
Orbits with $w \neq 0$ in $(u,w)$-space are now transverse to $\SS^0$; for $w<0$, these orbits
tend towards $\SS^0$, whereas they tend away from $\SS^0$ for $w>0$. All equilibria on
$\SS^0$ are non-hyperbolic, as the corresponding linearization
of the $(u,w)$-subsystem \{\cref{eq-4a},\cref{eq-4b}\} has a double 
zero eigenvalue.
}      
\label{fig:SysNTT:b}      
\end{figure}

As it turns out, it is beneficial to introduce the following rescaling of $w$ first:
\begin{align}\label{eq-6}
w=\frac{\tilde w}{\delta},
\end{align}
where
\begin{align}\label{delta}
\delta=\sqrt{\frac{\eps}{\lambda}}
\end{align}
is a new, non-negative parameter.
\begin{remark}
The scaling of $w$ by $\sqrt{\lambda}$ in~\cref{eq-6} shifts $\lambda$ from~\cref{eq-4b} to 
\cref{eq-4c}, the $\xi$-equation, after a rescaling of time. The scaling with $\eps^{-\frac12}$ in~
\cref{eq-6} reflects the fact that, for $\lambda=\OO(1)$, $w=\OO(\eps^{-\frac12})$, in agreement
with numerical simulations and asymptotic analysis performed in \cite{Li14}.
\end{remark}
\begin{remark}
Some parts of our analysis are conveniently carried out in the parameters $\eps$ and 
$\lambda$, while others are naturally described in terms of $\eps$ and $\delta$. Hence, we 
will alternate between these two descriptions, as needed.
\end{remark}

Substituting \cref{eq-6} into \cref{eq-4}, multiplying the right-hand sides in the resulting equations
with a factor of $\delta$, omitting the tilde and retaining the prime for differentiation
with respect to the new independent variable, as before, we find
\begin{subequations}\label{eq-7}
\begin{align}
u' &=u^4w, \label{eq-7a} \\
w' &=\eps(u^2-\eps^2), \label{eq-7b} \\
\xi' &=\delta u^4, \label{eq-7c} \\
\eps' &= 0,
\end{align}
\end{subequations}
still subject to the boundary conditions 
\begin{align}\label{eq-5b}
u=1\qquad\text{for }\xi=\mp1.
\end{align}
We remark that the fast-slow structure of 
\Cref{eq-7} is very simple, since \Cref{eq-7a,eq-7b} decouple from 
\Cref{eq-7c}; the latter induces a slow drift in $\xi$.

\Cref{eq-7,eq-5b} will form the basis for the 
subsequent analysis.
Two strategies suggest themselves for constructing solutions to the boundary value problem \{\cref{eq-7},\cref{eq-5b}\}. 
The first such strategy involves two sets of boundary conditions, corresponding 
to suitable intervals of $w$-values that are defined at $\xi=-1$ and $\xi=1$, respectively. Flowing 
these two sets of boundary conditions forward and backward, respectively, we verify the transversality 
of the intersection of the two resulting manifolds at $\xi=0$. Each initial $w$-value $w_0$ for which 
these two manifolds intersect gives a solution to the boundary value problem \{\cref{eq-7},\cref{eq-5b}\}. In particular, that strategy will be used to prove \cref{prop-1}.

Since all solutions to \{\cref{eq-7},\cref{eq-5b}\} are even, by \cref{rem:even}, another possible strategy consists of considering \Cref{eq-7} on the $\xi$-interval $[-1,0]$, with 
boundary conditions $u(-1)=1$ and $w(0)=0$. The set of initial conditions at $\xi=-1$ and $u=1$, but 
with 
arbitrary initial $w$-value $w_0$, is then tracked forward to the hyperplane $\{w=0\}$. The resulting 
manifold is naturally parametrized by $u(w,\eps,\delta,w_0)$ and $\xi(w,\eps,\delta,w_0)$; the unique 
``correct'' value $w_0(\eps,\delta)$ corresponding to a solution to the boundary value problem \{\cref{eq-7},\cref{eq-5b}\} is obtained by solving $\xi(w_0,\eps,\delta)=0$ under the constraint that $w(w_0,\eps,\delta)=0$. Details will be presented in the individual proofs below, in particular in those of \cref{prop-2} and \cref{prop-3}. Given \cref{rem:even}, any solution can be obtained via that second strategy; in fact, the intrinsic symmetry of the problem is also clearly visible in \cref{fig:Lin:b}.

\Cref{eq-7} constitutes a two-parameter fast-slow system in its fast formulation. The small
parameter $\eps$ represents the principal singular perturbation parameter here, while the limit of 
$\delta\to 0$ is also singular. For $\delta=\OO(1)$, the variables $u$ and $\xi$ are fast, while $w$ is 
slow; however, for $\delta$ small, the variable $\xi$ is slow, as well. The manifold $\SS^0$ defined in 
\cref{S0} is still invariant under the flow of \cref{eq-7}. Furthermore, for $\delta=0$, the manifold 
$\MM^0$ defined in~\cref{M0} also represents a set of equilibria for~\cref{eq-7}. 
(We remark that the same scenario occurs for $\lambda=0$ in~\cref{eq-4}.)

Setting $\eps=0$ in \Cref{eq-7}, we obtain the so-called {\it layer problem}
\begin{subequations}\label{eq-113}
\begin{align}
u' &=u^4w, \\
w' &=0, \\
\xi' &=\delta u^4, \\
\eps' &= 0;
\end{align}
\end{subequations}
see \cref{fig:SysNTT:b} for an illustration of the corresponding phase portrait in~$(u,w)$-space and, 
in particular, of the transversality of orbits of the layer problem to $\SS^0$. Rescaling the 
independent variable in \cref{eq-7} by multiplying it with $\eps$ yields the slow formulation
\begin{subequations}\label{eq-110}
\begin{align}
\eps\dot u &=u^4w, \\
\dot w &=u^2-\eps^2, \\
\eps\dot\xi &=\delta u^4, \\
\dot\eps &= 0.
\end{align}
\end{subequations}
The {\it reduced} problem, which is found by taking $\eps\to 0$ in \cref{eq-110}, reads
\begin{subequations}\label{eq-111}
\begin{align}
0 &=u^4w, \\
\dot w &=u^2, \\
0 &=\delta u^4, \\
\dot\eps &= 0.
\end{align}
\end{subequations}

For $\delta=0$, the manifolds $\SS^0$ and $\MM^0$, as defined in~\cref{S0} and \cref{M0},
respectively, now represent 
two branches of the {\it critical manifold} for \Cref{eq-7}; however, neither branch is 
normally hyperbolic, as the Jacobian of the linearization of the layer flow about both $\SS^0$ and 
$\MM^0$ is nilpotent. Moreover, as is obvious from \cref{eq-111}, the reduced flow on 
$\SS^0$ vanishes, and is hence highly degenerate. Therefore, standard geometric theory does not apply directly.

The underlying non-hyperbolicity can be remedied by means of the blow-up method~
\cite{Du93,DR96,KS01,KS01b}. A blow-up with 
respect to $\eps$ will allow us to describe the dynamics of \cref{eq-4} in a neighborhood of 
the manifold $\SS^0$; cf.~\Cref{sec:blup}. 
Our analysis relies on a number of dynamical systems techniques,
such as classical geometric singular perturbation theory \cite{Fe79}, normal form transformations \cite{Wi03}, and the 
Exchange Lemma \cite{JKK96, JK94, Kue}, the
combination of which will result in precise and rigorous asymptotics for \Cref{eq-7}.

To determine the appropriate blow-up transformation, we focus on the $(u,w)$-subsystem \{\cref{eq-7a},\cref{eq-7b}\},
which for $\eps>0$ admits two saddle equilibria at $(\pm\eps,0)$. As we restrict to 
$u\ge 0$, we consider the positive equilibrium only. The scaling $u=\eps\hat u$ transforms 
\{\cref{eq-7a},\cref{eq-7b}\} into 
\begin{align*}
\hat u' &= \eps^3 \hat u^4 w, \\
w' &= \eps^3(\hat u^2-1),
\end{align*}
which yields the integrable system
\begin{subequations} \label{xy}
\begin{align} 
 \hat u' &= \hat u^4 w, \\
 w' &= \hat u^2-1
\end{align}
\end{subequations}
after division through the common factor $\eps^3$. The saddle equilibrium at $(1,0)$, together with its 
stable and unstable manifolds, will play a crucial role in the following; the line $\hat u=0$ is 
invariant, with $w$ decreasing thereon. The corresponding phase portrait is shown in 
\cref{fig:saddle10}.

\begin{figure}[!ht]
\centering
\includegraphics[scale=0.9]{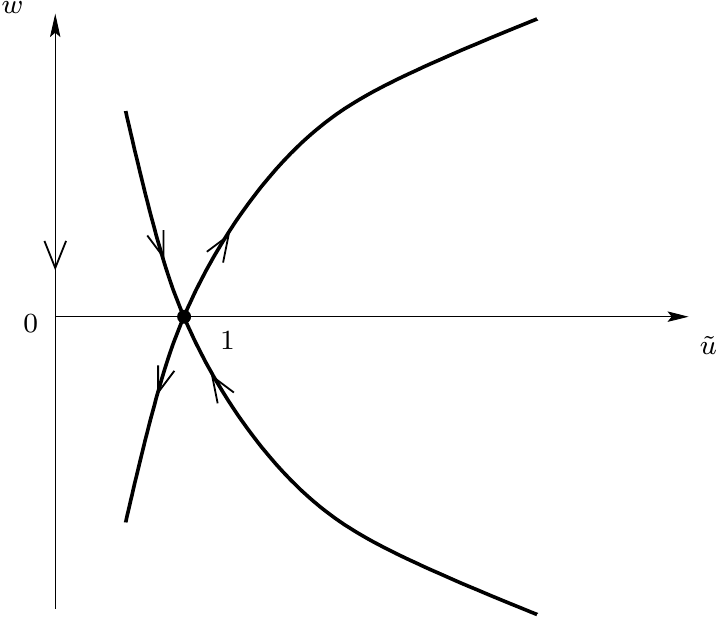}
\caption{The saddle point $(1,0)$ of \Cref{xy} and its stable and unstable manifolds.
}      
\label{fig:saddle10}      
\end{figure}

%----------------------------------------------------------------------------
\section{Geometric Desingularization (``Blow-Up")}\label{sec:blup}
%----------------------------------------------------------------------------

In this section, we introduce the blow-up transformation %$\Phi$
that will allow us to desingularize the flow of \Cref{eq-7} near the non-hyperbolic 
manifold $\SS^0$. The discussion at the end of \Cref{sec:dynfor} suggests the following blow-up:
\begin{align}\label{eq-8}
u=\bar r\bar u,\quad w=\bar w,\quad\xi=\bar \xi,\quad\text{and}\quad\eps=\bar r\bar\eps,
\end{align}
where $(\bar w, \bar \xi) \in \mathbb{R}^2$ and $(\bar u, \bar \eps) \in S^1$, {\it i.e.}, 
$\bar u^2+\bar\eps^2=1$. Moreover, $\bar{r} \in [0,r_0)$, with $r_0>0$. We note that the equilibrium at $(u,\eps)=(0,0)$ is blown up to the 
circle $\{\bar{r} = 0\}$; here, we emphasize that we do not blow up the variables $w$ and $\xi$.

The vector field that is induced by \cref{eq-7} on the cylindrical manifold
in $(\bar u,\bar w,\bar\xi,\bar\eps,\bar r)$-space is best described in coordinate charts. We require
two charts here, $K_1$ and $K_2$, which are defined by $\bar u=1$ and $\bar\eps=1$, respectively:
\begin{subequations}\label{eq-9}
\begin{align}
K_1:\ & (u,w,\xi,\eps)=(r_1,w_1,\xi_1,r_1\eps_1), \label{eq-9a} \\
K_2:\ & (u,w,\xi,\eps)=(r_2u_2,w_2,\xi_2,r_2). \label{eq-9b}
\end{align}
\end{subequations}
\begin{remark} \label{rem:K1K2}
The phase-directional chart $K_1$ describes the ``outer" regime, which corresponds to the 
transient dynamics from $u=1$ to $u=0$, while the rescaling chart $K_2$ -- 
also known as the \emph{scaling chart} -- covers the ``inner" regime where $u \approx 0$, in the 
context of \Cref{eq-7}; in particular, in chart $K_2$, we recover \Cref{xy}.
\end{remark}

\begin{figure}[!ht]
\centering
\hspace{-.5cm}
\subfigure[Flow in $(u,w,\eps)$-space; the thick gray line represents the critical manifold $\SS^0$.]{
\includegraphics[scale=0.68]{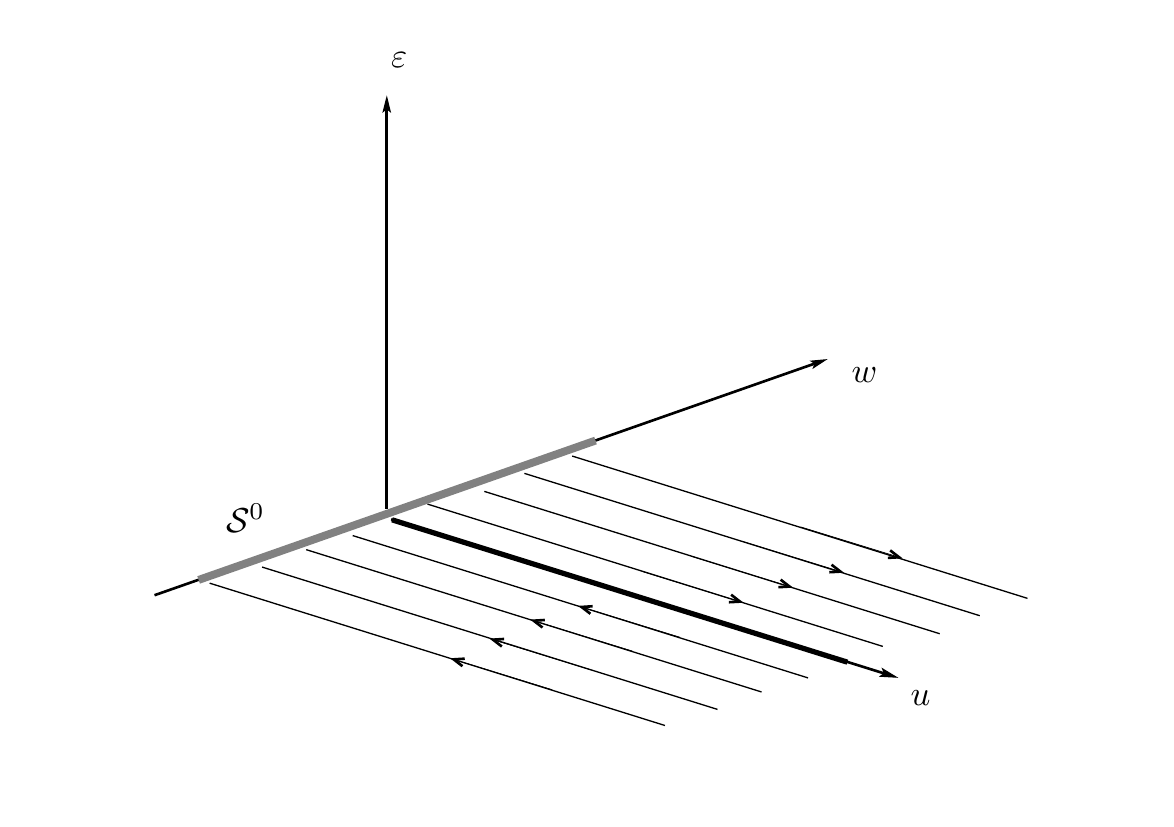} 
}
\subfigure[Geometry in blown-up $(\bar{u},\bar{w},\bar{\eps})$-space; $\SS^0$ is now represented 
by the cylinder corresponding to \mbox{$\bar{u}^2+\bar{\eps}^2=1$}.]{
\includegraphics[scale=0.68]{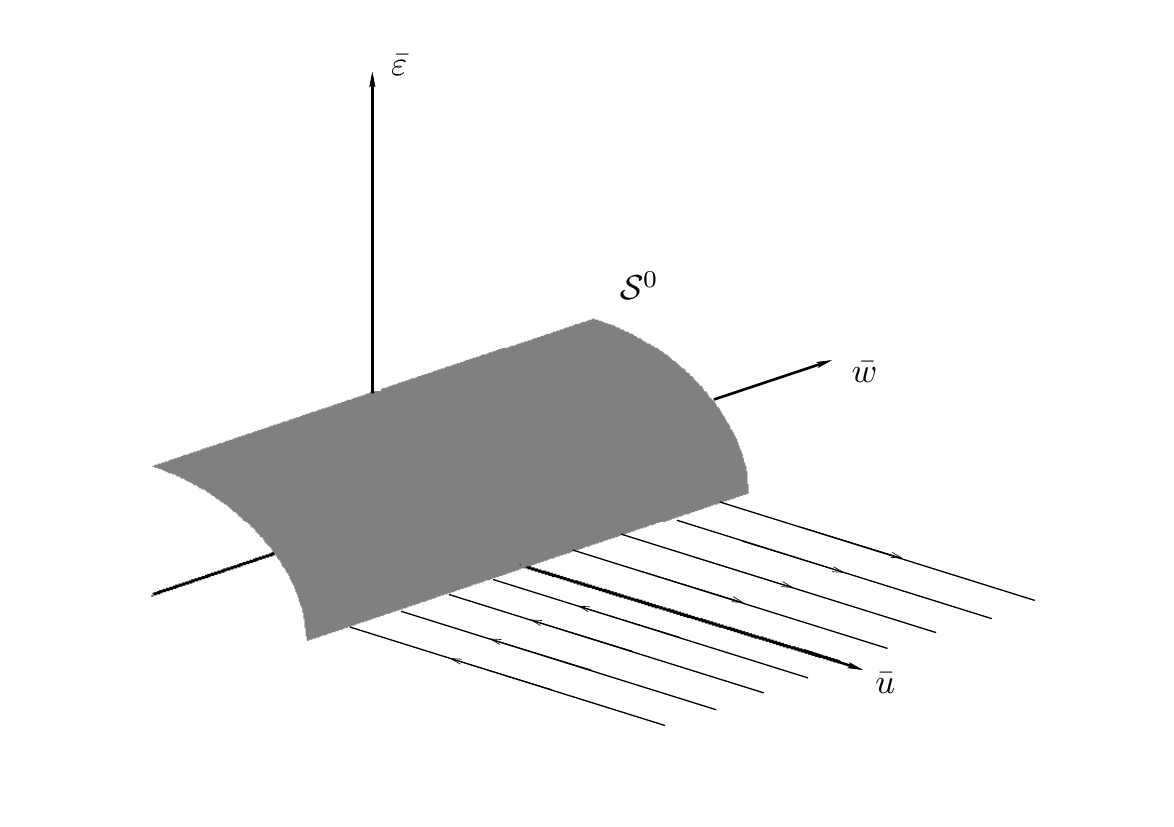}
}
\caption{Flow of \Cref{eq-7} for $\eps=0$ (a) before and (b) after the blow-up in \cref{eq-8}.}
\label{fig:blowup}      
\end{figure}

The change of coordinates between charts $K_1$ and $K_2$, which we denote by $\kappa_{12}$, can be written as
\begin{align}\label{eq-33}
\kappa_{12}:\ (u_2,w_2,\xi_2,r_2)=\big(\eps_1^{-1},w_1,\xi_1,r_1\eps_1\big),
\end{align}
while its inverse $\kappa_{21}$ is given by
\begin{align}\label{eq-34}
\kappa_{21}:\ (r_1,w_1,\xi_1,\eps_1)=\big(r_2u_2,w_2,\xi_2,u_2^{-1}\big).
\end{align}

% \new{
% %----------------------------------------------------------------------------
% \subsection{Dynamics in chart $K_1$}
% %----------------------------------------------------------------------------

To obtain the governing equations in $K_1$, we substitute the transformation from \cref{eq-9a} into \Cref{eq-7}; a straightforward calculation yields
\begin{subequations}\label{eq-10}
\begin{align}
r_1' &=r_1^4w_1, \\
w_1' &=r_1^3\eps_1(1-\eps_1^2), \\
\xi_1' &=\delta r_1^4, \\
\eps_1' &=-r_1^3\eps_1w_1.
\end{align}
\end{subequations}
Since $\eps=r_1 \eps_1$, the singular limit of $\eps=0$ corresponds to the restriction of the flow of 
\cref{eq-10} to one of the invariant planes $\{r_1=0\}$ or $\{\eps_1=0\}$. In order to obtain a 
non-vanishing vector field for $r_1=0$, we desingularize \Cref{eq-10} by dividing out
a factor of $r_1^3$ from the right-hand sides, which again represents a rescaling of 
the corresponding independent variable:
\begin{subequations}\label{eq-11}
\begin{align}
r_1' &=r_1w_1, \label{eq-11a} \\
w_1' &=\eps_1(1-\eps_1^2), \label{eq-11b} \\
\xi_1' &=\delta r_1, \label{eq-11c} \\
\eps_1' &=-\eps_1w_1. \label{eq-11d}
\end{align}
\end{subequations}

% %----------------------------------------------------------------------------
% \subsection{Dynamics in chart $K_2$}\label[subsection]{sec:blup2}
% %----------------------------------------------------------------------------

The governing equations in $K_2$ are obtained by substituting the transformation in \cref{eq-9b} into \cref{eq-7}, which gives
\begin{subequations}\label{eq-12}
\begin{align}
u_2' &=r_2^3u_2^4w_2, \\
w_2' &=r_2^3(u_2^2-1), \\
\xi_2' &=\delta r_2^4u_2^4. \\
r_2' &=0.
\end{align}
\end{subequations}
Desingularizing as before, by dividing out a factor of $r_2^3$ from the right-hand sides in
\cref{eq-12}, we find
\begin{subequations}\label{eq-13}
\begin{align}
u_2' &=u_2^4w_2, \label{eq-13a} \\
w_2' &=u_2^2-1, \label{eq-13b} \\
\xi_2' &=\delta r_2u_2^4, \label{eq-13c} \\
r_2' &=0.
\end{align}
\end{subequations}
Here, we remark that, by construction, the $(u_2,w_2)$-subsystem \{\cref{eq-13a},\cref{eq-13b}\} corresponds to \Cref{xy}.
% }

Finally, we define various sections for the blown-up vector field, which will be used
throughout the following analysis: in $K_1$, we will require the entry and exit sections
\begin{subequations}\label{eq-20}
\begin{align}
\Sigma_1^{\rm in} &:=\set{(\rho,w_1,\xi_1,\eps_1)}{w_1\in[w_-,w_+],\ \xi_1\in[\xi_-,\xi_+],\text{ and }\eps_1\in[0,\sigma]}\quad\text{and} \label{eq-20a} \\
\Sigma_1^{\rm out} &:=\set{(r_1,w_1,\xi_1,\sigma)}{r_1\in[0,\rho],\ w_1\in[w_-,w_+],\text{ and }\xi_1\in[\xi_-,\xi_+]}, \label{eq-20b}
\end{align}
\end{subequations}
respectively, where $0<\rho<1$ and $0<\sigma<1$ are appropriately defined constants, while $w_\mp$ and 
$\xi_\mp$ are real constants, with $w_-<-\frac2{\sqrt3}$ and $w_+>\frac2{\sqrt3}$.
Similarly, in chart $K_2$, we will employ the section
\begin{align}\label{eq-21}
\Sigma_2^{\rm in}:=\set{(\sigma^{-1},w_2,\xi_2,r_2)}{w_2\in[w_-,w_+],\ \xi_2\in[\xi_-,\xi_+],\text{ and }r_2\in[0,\rho\sigma]}; 
% \\
% \Sigma_2^{\rm out}:=\set{(u_2,0,\xi_2,r_2)}{u_2\in[w_-,w_+],\ \xi_2\in[\xi_-,\xi_+],\text{ and }r_2\in[0,\rho\sigma]};
\end{align}
here, we note that $\Sigma_2^{\rm in} = \kappa_{21}\big(\Sigma_1^{\rm out}\big)$.

\Cref{eq-11,eq-13} will allow us to construct solutions of~\{\cref{eq-7},\cref{eq-5b}\}.
Following the strategy outlined in~\Cref{sec:dynfor}, we will focus our attention on the $\xi$-interval $[-1,0]$ with boundary conditions $u(-1)=1$ and $w(0)=0$; in particular, and as indicated in \Cref{rem:K1K2}, the ``outer" regime will be realized in terms of the flow between the sections $\Sigma_1^{\rm in}$ and $\Sigma_1^{\rm out}$ in chart $K_1$. Translating the resulting asymptotics into chart $K_2$ via the transformation in \Cref{eq-33}, we will then construct solutions in the ``inner" regime between the section $\Sigma_2^{\rm in}$ and the hyperplane corresponding to $\{w=0\}$.

\begin{remark} \label{not}
In the following, we will denote a given general variable $z$ in blown-up space with $\bar{z}$.
In charts $K_i$, $i=1,2$ that variable will instead be labeled with the corresponding subscript, as 
$z_i$. 
\end{remark}

%----------------------------------------------------------------------------
\section{Analysis of Bifurcation Diagram -- Proof of \cref{thm-1}} \label{sec:bifdiag}
%----------------------------------------------------------------------------

In this section, we establish the bifurcation diagram in \cref{fig:Lin:a} for $\eps$ positive and 
sufficiently small, proving \cref{thm-1}. To that end, we investigate the existence and uniqueness of solutions to 
\Cref{eq-7}, subject to the boundary conditions in \cref{eq-5b}.

All such solutions arise as perturbations of certain limiting solutions that are obtained in the limit
of $\eps=0$. We denote these limiting solutions as singular solutions, as is usual in geometric singular perturbation theory.
The approach adopted thereby is the following: first, singular solutions are constructed 
by analyzing the dynamics in charts $K_1$ and $K_2$ separately in the limit as $\eps\to0$.
Then, the persistence of singular solutions for non-zero $\eps$ is shown via the 
shooting argument outlined in \Cref{sec:dynfor}, which relies on the transversality of the 
geometric objects involved. That transversality translates into the existence of solutions to the boundary value problem \{\cref{eq-7},\cref{eq-5b}\} along the branches depicted in the bifurcation diagram in \cref{fig:Lin:a}.

\begin{definition}\label{def:soltyp}
\hspace{-.2cm}We distinguish three types of singular solutions to the boundary value problem \{\cref{eq-7},\cref{eq-5b}\}; see \cref{fig:soltyp}:
\begin{description}
\item[\emph{Type I.}] Solutions of type I satisfy $u=0$ for $x \in I$, where $I$ is an interval centered at 
$x=0$. Consequently, the slope of such solutions must initially satisfy $|w|>1$, in terms of the original $w$-variable. Type I-solutions, which will henceforth be illustrated in blue, occur in two subtypes: 
the ones 
corresponding to $\lambda=\OO(\eps)$ have constant finite slope $w$ outside of $I$, while the ones 
corresponding to $\lambda=\OO(1)$ vanish on $I=(-1,1)$.
\item[\emph{Type II.}] Solutions of type II are those of slope $w\equiv\mp1$, in terms of the original 
$w$-variable. These solutions exhibit ``touchdown'', reaching $\{u=0\}$ at one point only, namely at $\xi=0$. 
Type II-solutions will be indicated in green in all subsequent figures.	
\item[\emph{Type III.}] Solutions of type III never reach $\{u=0\}$; hence, no touchdown phenomena 
occur. These solutions correspond to solutions of the non-regularized model, with $\eps=0$ in 
\Cref{eq-2} \cite{Pe02, PB02}.
\end{description}
\end{definition}

\begin{remark} \label{rem:soltyp}
The usage of the plural in the definition of type II-solutions requires additional clarification. 
For \Cref{eq-4}, there exists just one singular solution of type II for $\lambda=0$ with slope 
$w=\mp 1$; see the solution labeled~$d$ in Figure~\cref{fig:Lin0}. However, in our blow-up analysis, 
that singular solution corresponds to a one-parameter family of type II-solutions.
\end{remark}

\begin{figure}[H]
\centering
% \hspace{-.5cm}
\subfigure[]{
\includegraphics[scale=0.75]{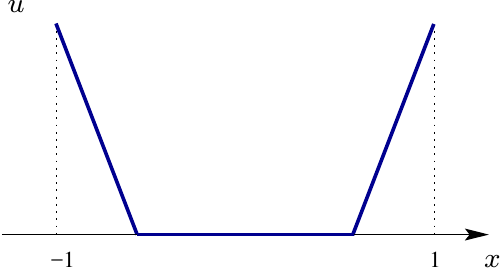} \hspace{.5cm}
\includegraphics[scale=0.75]{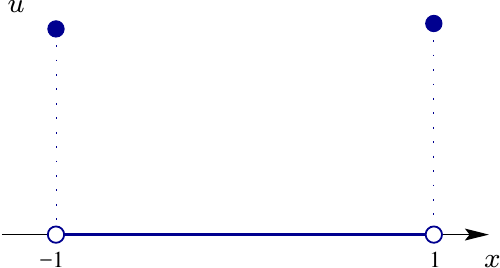} \label{fig:soltypI}
}\\
\subfigure[]{
\includegraphics[scale=0.75]{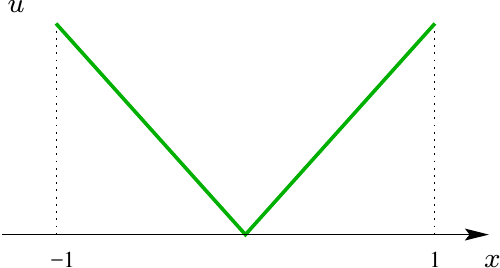} \label{fig:soltypII}
}\\
\subfigure[]{
\includegraphics[scale=0.75]{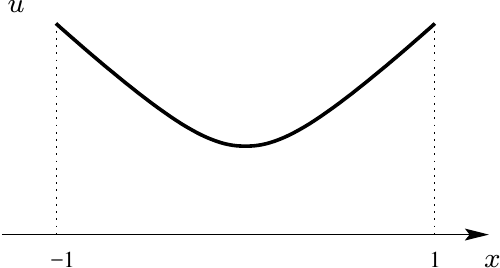}
}
\caption{Singular solutions to \Cref{eq-7}, as specified in \cref{def:soltyp}: (a) type I-solutions for $\lambda=\OO(\eps)$ (left panel) and $\lambda = \OO(1)$ (right panel), (b) type II-solutions, and (c) type III-solutions.}
\label{fig:soltyp}      
\end{figure}

For $\eps>0$, we divide the bifurcation diagram in \cref{fig:Lin:a} into three overlapping regions, as 
shown in \cref{fig:bdsegm}. 
\begin{remark}\label{normu}
Henceforth, we will refer to the norm $\Vert u\Vert_2^2$ in terms of the original variable $u$ in order to be able to compare our analysis with that in \cite{Li14}; see \cref{fig:Lin0,fig:Lin}. 
\end{remark}
Region $\mathcal{R}_1$ is defined as
\begin{align}\label{eq:R1}
\mathcal{R}_1:=[0,1]\times\bigg[\frac23+\nu_1,2\bigg],\qquad\text{with }\nu_1 >0;
\end{align}
that region covers the upper part of the bifurcation diagram, where we find the newly emergent 
branch of solutions for $\eps>0$ in \cref{eq-2} by perturbing from singular solutions of type I. 
Region $\mathcal{R}_2$, which is defined as
\begin{align}\label{eq:R2}
\mathcal{R}_2:=[0,\eps\lambda_2]\times\bigg[\frac23-\nu_2,\frac23+\nu_2\bigg],\qquad
\text{with }\lambda_2>0\text{ and }\nu_2>0
\end{align}
for $\nu_2>\nu_1$ and $\lambda_2$ large, but fixed, represents a small neighborhood of the point 
$B$ that is depicted as a rectangle in \cref{fig:bdsegm}. That region
shrinks with decreasing $\eps$, collapsing to the segment 
$\{0\}\times\big[\frac23-\nu_2,\frac23+\nu_2\big]$
as $\eps \to 0$. The branch of solutions contained in this ``transition'' region is constructed 
by perturbation from singular solutions of types I and II. Finally,
region $\mathcal{R}_3$ is defined as
\begin{align}\label{eq:R3}
\mathcal{R}_3:=[0,1]\times\bigg[0,\frac23+\nu_2\bigg]\setminus[0,\eps\lambda_3]
\times\bigg[\frac23-\nu_3,\frac23+\nu_2\bigg],\qquad\text{with }\lambda_3>0\text{ and }\nu_3 >0,
\end{align}
where $\nu_3<\nu_2$ and $\lambda_3$ is again large, but fixed, with $\lambda_3 < \lambda_2$.
Region $\mathcal{R}_3$ covers the lower part of the bifurcation diagram in \cref{fig:Lin:a}, and 
contains the branch of solutions which is obtained by perturbing from solutions of types II and III. 

\begin{figure}[!h]
\centering
\includegraphics[scale=.7]{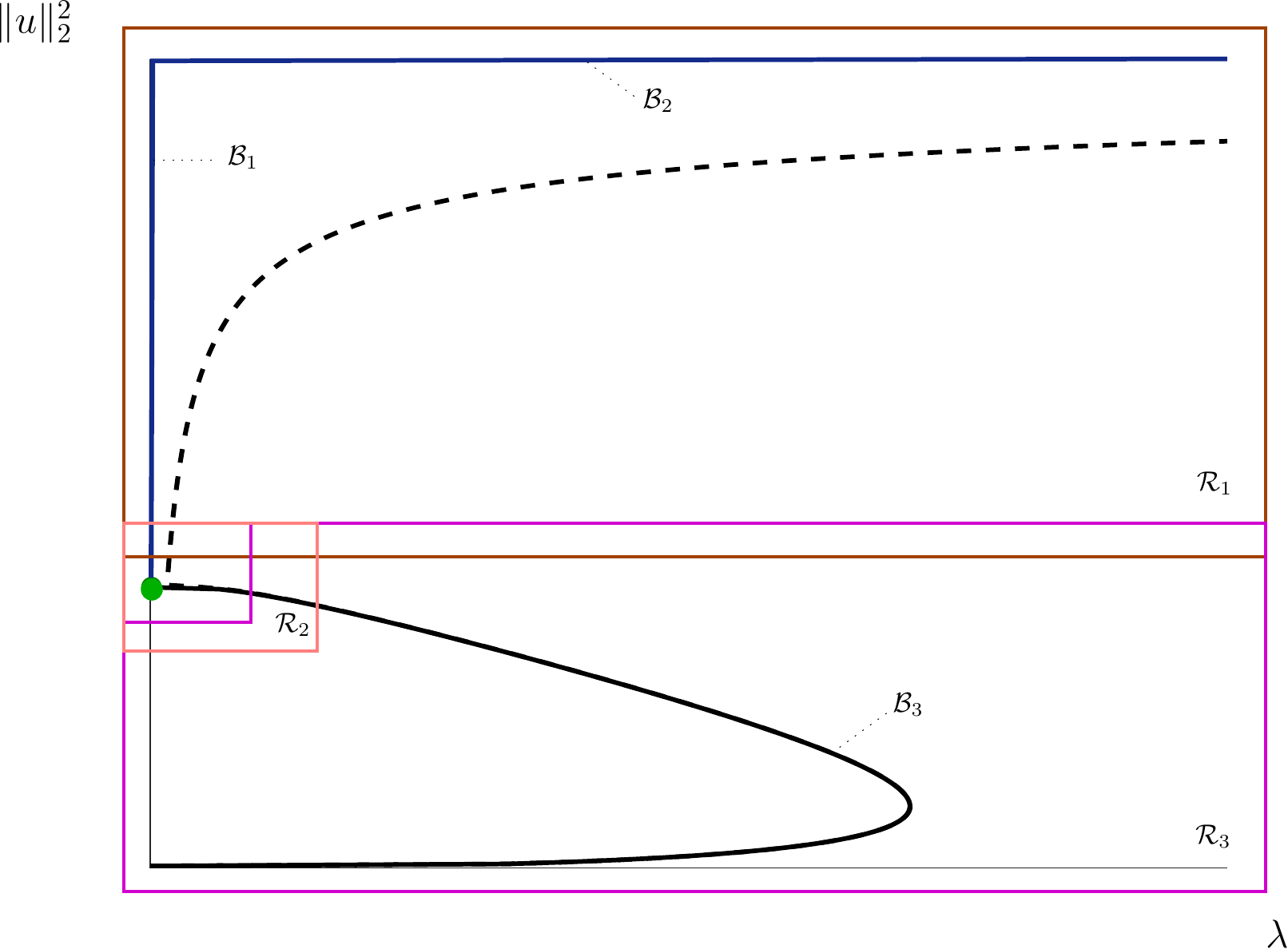}
\caption{Covering of the bifurcation diagram for the boundary value problem \{\cref{eq-7},\cref{eq-5b}\}
by three overlapping regions $\mathcal{R}_1$ (brown), $\mathcal{R}_2$ (pink), and 
$\mathcal{R}_3$ (magenta), for $\eps$ positive and small. (For improved visibility,
the regions have been extended slightly below $\Vert u \Vert_2^2=0$, above $\Vert u \Vert_2^2=2$, 
and to negative $\lambda$, respectively; also, we recall \cref{normu} with regard to the interpretation 
of $\Vert u \Vert_2^2$ in this context.) The branches of solutions to 
the boundary value problem \{\cref{eq-7},\cref{eq-5b}\} for $\eps=0.01$ (dotted curve) and $\eps=0$ (solid curve)
are also displayed. In $\mathcal{R}_3$, these branches overlap almost entirely. 
For $\eps=0$, the blue branch reduces to the union of a vertical part $\mathcal{B}_1$, 
corresponding to $\lambda = \OO(\eps)$, and a horizontal part
$\mathcal{B}_2$ which corresponds to $\lambda = \OO(1)$.
The green dot at $B$ represents the singular solution of type II
for $\lambda=0$ that is labeled~$d$ in \cref{fig:Lin0:a}. The black curve, corresponding to 
the branch of solutions to the non-regularized model, is labeled $\mathcal{B}_3$.
In the limit as $\eps\to 0$, $\mathcal{R}_2$ shrinks to a segment on the $\Vert u \Vert_2^2$-axis
that contains the point $B$, cf.~\cref{eq:R2}, while region $\mathcal{R}_3$ grows 
to a rectangle minus a smaller segment on the $\Vert u \Vert_2^2$-axis
containing the point $B$; recall~\cref{eq:R3}.}
\label{fig:bdsegm}      
\end{figure}

The true meaning of these regions becomes clearer when we consider a blow-up of the bifurcation 
diagram in parameter space, {\it i.e.}, with respect to $\lambda$ and $\eps$, as illustrated in 
\cref{fig:bdblup}. (That same point of view will also prove useful in parts of the following analysis.)
We first embed the diagram, which depends on $(\lambda, \Vert u \Vert_2^2)$, into 
$\mathbb{R}^3$ by including the third variable $\eps$. Then, we blow up the line $\{(0,0)\} \times \mathbb{R}$ by introducing $\bar{r}, \bar{\lambda}$, and $\bar{\eps}$ such that 
\begin{align*}
\lambda=\bar{r}\bar{\lambda}\qquad\text{and}\qquad\eps=\bar{r}\bar{\eps} 
\end{align*}
with $\bar{\lambda}^2+\bar{\eps}^2=1$, {\it i.e.}, for $(\bar{\lambda},\bar{\eps})\in S^1$, and $\bar{r} \in [0, r_0)$, where $r_0>0$.
In the blown-up space $S^1\times\mathbb{R}^2$, the line
$\{(0,0)\}\times\mathbb{R}$ is hence blown up to a cylinder $S^1\times\{0\}\times\mathbb{R}$. 

After blow-up, the curve of singular solutions obtained for $\eps=0$ consists of three portions which
correspond to singular solutions of types I, II, and III, cf. \cref{fig:soltyp}, and which are shown in 
blue, green, and black, respectively. The black curve (type III) is located in $\bar{\eps}=0$, while
the green curve (type II) lies on the cylinder, {\it i.e.}, in $\{\bar r=0\}$, with $\Vert u \Vert_2^2=\frac23$ 
constant. Finally, the blue curve (type I) consists of a branch on the cylinder, corresponding to 
$\lambda = \OO(\eps)$, and of another branch in the plane $\{\bar{\eps}=0\}$ that corresponds to 
$\lambda=\OO(1)$. In the former case, type I-solutions
resemble the one shown in the left panel of \cref{fig:soltypI}; in the second case, 
type I-solutions are as in the right panel of \cref{fig:soltypI}. These two branches correspond to 
$\mathcal{B}_1$ and $\mathcal{B}_2$, respectively, as defined 
in \cref{fig:bdsegm}. 

Loosely speaking, in blown-up space, a neighborhood of the green curve is hence covered by 
region $\mathcal{R}_2$ and part of $\mathcal{R}_3$. The blue curve is mostly covered by region 
$\mathcal{R}_1$, with a small portion close to $\delta=\frac2{\sqrt3}$ covered by $\mathcal{R}_2$.
Finally, region $\mathcal{R}_3$ covers the remainder of the green curve close to $\delta=0$, 
and the black curve. The curve obtained for $0<\eps\ll 1$, which is depicted in red in \cref{fig:bdblup}, 
lifts off from the singular curve corresponding to the limit of $\eps = 0$. 
\begin{remark}
When referring to regions $\mathcal{R}_i$, $i=1,2,3$, in blown-up space, we need to consider the 
preimages of $\mathcal{R}_i\times [0,\eps_0]$ under the blow-up transformation defined above, 
strictly speaking. However, for the sake of simplicity, we will use the two notations interchangeably.
\end{remark}

\begin{figure}[H]
\centering
\includegraphics[scale=.9]{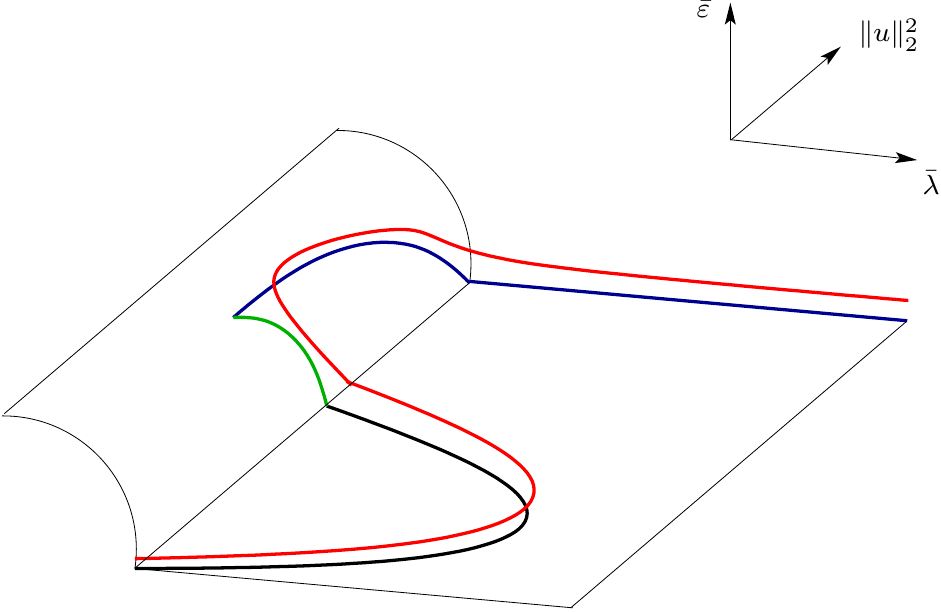}
\caption{Bifurcation diagram for the boundary value problem \{\cref{eq-7},\cref{eq-5b}\} in blown-up
parameter space. In the singular limit corresponding to $\eps=0$, the diagram consists of the union of the blue, green, and black solid curves, which are covered
by $\mathcal{R}_1$, $\mathcal{R}_2$, and $\mathcal{R}_3$, respectively.
The portion of the blue curve which lies on the cylinder corresponds to the line $\mathcal{B}_1$, while
the portion contained in the $\{\bar{\eps}=0\}$-plane corresponds to $\mathcal{B}_2$.
The red curve which lifts off from the \{$\eps=0$\}-curve represents
solutions to the boundary value problem \{\cref{eq-7},\cref{eq-5b}\} for $\eps$ positive, but small.}
\label{fig:bdblup}      
\end{figure}

As stated in \cref{thm-1}, we consider $\lambda \in [0,\Lambda]$, where we take $\Lambda=1$ 
for the sake of simplicity. In region $\mathcal{R}_3$, away from the point $B$, 
the perturbation with $\eps$ is regular. As will be shown below, singular solutions in regions 
$\mathcal{R}_1$ and $\mathcal{R}_2$ exist only for $\lambda\geq\frac34 \eps$ or, equivalently, for
$\delta\leq\frac2{\sqrt3}$; cf.~\cref{ssec:reg1,ssec:reg2}. Hence, in these 
regions, we need to take $\lambda\in\big[\frac34\eps,1\big]$, {\it i.e.},
\begin{align} \label{delrange}
\delta\in\bigg[\sqrt{\eps},\frac2{\sqrt3}\bigg],
\end{align}
which corresponds to the region shaded in gray in \cref{fig:deleps}.

\begin{figure}[H]
\centering
\includegraphics[scale=1]{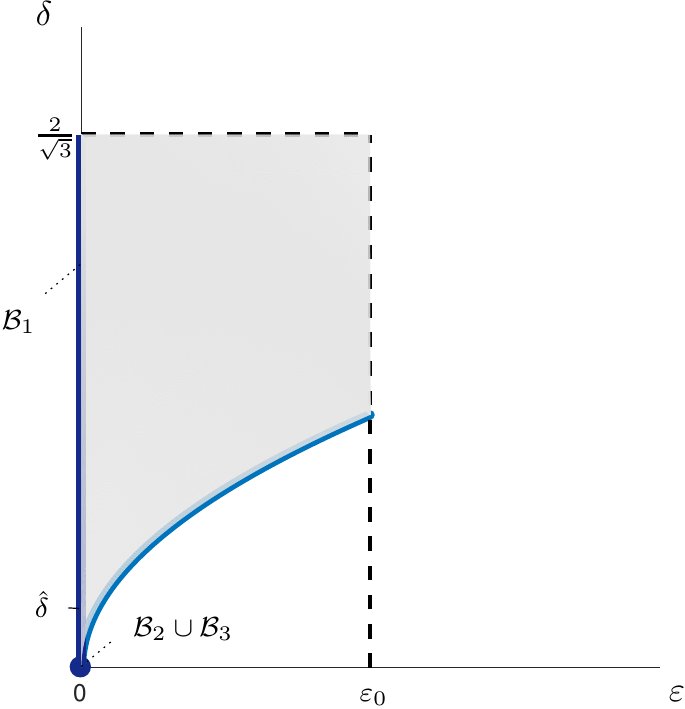}
\caption{Region in $(\eps,\delta)$-space, as considered in our analysis. The region, which is 
shaded in gray, is bounded from below by $\{\delta=\sqrt{\eps}\}$ (light blue curve) and from above 
by $\{\delta=\frac2{\sqrt3}\}$ (dashed horizontal line); cf.~\cref{delrange}. The dark blue segment of the $\delta$-axis corresponds to the union of the two portions of the blue curve shown in \cref{fig:bdsegm,fig:bdblup}. The vertical line corresponds to the line $\mathcal{B}_1$,
which is divided into two segments at some $\hat{\delta} > 0$ small, while the dot corresponds to the curve $\mathcal{B}_2\cup\mathcal{B}_3$. 
}
\label{fig:deleps}      
\end{figure}

As evidenced in \cref{fig:deleps}, $\delta=0$ occurs only when $\eps=0$, which is the point 
represented by the blue dot therein. The corresponding, highly degenerate limit gives 
a singular orbit of type I with very singular structure, as shown in the right panel in \cref{fig:soltypI}. 
Hence, the whole line $\mathcal{B}_2$ in the bifurcation diagram for $\eps=0$ shown in 
\cref{fig:bdsegm} corresponds to that one singular solution. 

%----------------------------------------------------------------------------
\subsection{Region $\mathcal{R}_1$}\label[subsection]{ssec:reg1}
%----------------------------------------------------------------------------

Region $\mathcal{R}_1$ in the bifurcation diagram in \cref{fig:bdsegm} corresponds to solutions 
that reduce to those of type I in the singular limit; cf.~\cref{def:soltyp}.
For $\eps$ positive and sufficiently small, solutions on that branch come very close to $\{u=\eps\}$;
moreover, the length of the interval $I$ where $u\approx\eps$ grows with $\lambda$. 
In the singular limit of $\eps = 0$, the slope of the respective solutions is moderate
for $\lambda=\OO(\eps)$, corresponding to $0<\delta<\frac{2}{\sqrt3}$, while it
tends to infinity for $\lambda=\OO(1)$ -- {\it i.e.}, as $\delta\to 0$ --
along the two segments where $u$ changes from $u = 0$ to $u=1$.
These observations are confirmed by the rescaling of $w$ in \cref{eq-6}: for 
$\lambda=\OO(\eps)$, that rescaling translates into $w=\OO(1)$, while it gives $w\to\infty$
for $\lambda=\OO(1)$; cf.~\cref{fig:soltypI}. Interestingly, the proof of our main result in this section,
which is stated below, is very similar for these two $\lambda$-regimes:
\begin{proposition}\label{prop-1}
Given $\delta_1$ fixed, with $0 <\delta_1<\frac2{\sqrt3}$ and $\delta_1\approx\frac2{\sqrt3}$, 
there exists $\eps_0>0$ sufficiently small such that in region $\mathcal{R}_1$,  
the boundary value problem \{\cref{eq-7},\cref{eq-5b}\} has a unique branch of solutions for $\eps\in(0,\eps_0)$ and 
$\lambda\in\big[\frac\eps{\delta_1^2},1\big]$. As $\eps\to 0$, these solutions limit on a singular solution $\Gamma$ of type I.
\end{proposition}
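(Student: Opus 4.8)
The plan is to implement the shooting strategy described at the end of Section 2, working in the blown-up coordinates of Section 3 and tracking a single orbit from the boundary data at $\xi=-1$ through charts $K_1$ and $K_2$ to the symmetry hyperplane $\{w=0\}$. Concretely, I would parametrize the initial data at $\xi=-1$ by $u=1$ with a free slope $w_0$, lift it into chart $K_1$ via \cref{eq-9a}, and flow it forward under the desingularized system \cref{eq-11}. The goal is to produce a map $(\xi,w)\mapsto\big(\xi(w_0,\eps,\delta),w(w_0,\eps,\delta)\big)$ recording where the orbit meets $\{w=0\}$, and then to solve the two scalar conditions $w=0$ and $\xi=0$ simultaneously for the correct $w_0(\eps,\delta)$. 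For $\eps=0$ this produces the singular orbit $\Gamma$ of type I; the content of the proposition is that this solution persists and is locally unique for small $\eps>0$.

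The first main step is to construct $\Gamma$ in the singular limit. For $\eps=0$ in chart $K_1$, system \cref{eq-11} simplifies considerably (the $w_1$-equation decouples, since its right-hand side $\eps_1(1-\eps_1^2)$ depends only on $\eps_1$, and $\eps_1'=-\eps_1 w_1$), so the relevant planar dynamics are explicitly integrable; I would compute the passage from $\Sigma_1^{\rm in}$ to $\Sigma_1^{\rm out}$ in closed form and read off the transition map. In chart $K_2$, the subsystem \{\cref{eq-13a},\cref{eq-13b}\} is exactly the integrable system \cref{xy}, whose phase portrait (\cref{fig:saddle10}) is already understood: the saddle at $(1,0)$ together with its stable and unstable manifolds organizes the flow. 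The singular orbit $\Gamma$ is then assembled by matching the $K_1$-exit data with the $K_2$-entry data via the coordinate change $\kappa_{21}$ in \cref{eq-34}, using the fact that $\Sigma_2^{\rm in}=\kappa_{21}(\Sigma_1^{\rm out})$; the condition $0<\delta_1<\tfrac2{\sqrt3}$ with $\delta_1\approx\tfrac2{\sqrt3}$ selects precisely the slope regime in which the orbit approaches $\{u=\eps\}$ (i.e.\ the saddle at $\hat u=1$) and then returns to $\{w=0\}$, as required for a type I-solution.

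The second main step is persistence and transversality. Here I would invoke geometric singular perturbation theory together with the Exchange Lemma to continue the two manifolds — the forward image of the boundary data and the symmetry fiber $\{w=0\}$ — to $\eps>0$, and establish that their intersection at $\xi=0$ is transverse. The key quantity is the derivative of the return map $w_0\mapsto\big(\xi(w_0),w(w_0)\big)$ with respect to $w_0$; transversality amounts to showing this derivative is nonzero (equivalently, that the Jacobian of the shooting system is nonsingular) uniformly for $\eps\in(0,\eps_0)$. Because the passage near the circle $\{\bar r=0\}$ involves the saddle structure of \cref{xy}, I expect the Exchange Lemma to give $C^1$-closeness of the tracked manifold to the relevant unstable manifold of the saddle, which is what makes the transversality estimate quantitative and uniform in $\eps$. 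An implicit function theorem argument in the parameters $(\eps,\delta)$ then yields a unique $w_0(\eps,\delta)$, hence a unique branch of solutions, converging to $\Gamma$ as $\eps\to0$.

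The hardest part will be controlling the passage through chart $K_1$ near the non-hyperbolic circle $\{\bar r=0\}$ and obtaining the transition map between $\Sigma_1^{\rm in}$ and $\Sigma_1^{\rm out}$ with enough uniformity to verify transversality — in particular, tracking how the contraction/expansion rates degenerate as $\eps\to0$ and confirming that the resulting intersection remains transverse down to $\eps=0$. The case $\lambda=\OO(1)$ (i.e.\ $\delta\to0$, large slope $w$) is delicate because the orbit spends very little "time" away from the saddle, so care is needed to ensure the estimates do not deteriorate in that regime; the remark following \cref{prop-1} suggests the argument is uniform across both $\lambda$-regimes, which I would confirm by checking that the Exchange Lemma estimates survive the $\delta\to0$ limit.
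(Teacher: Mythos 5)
Your construction of the singular orbit $\Gamma$ is essentially the paper's: explicit integration of the two limiting systems in chart $K_1$, the integrable saddle structure of \cref{xy} in chart $K_2$, and matching via $\kappa_{21}$, with the condition $\delta<\frac{2}{\sqrt3}$ ensuring the two saddle points $Q_2^{\mp}$ on $\SS_2^0$ are distinct. The gap lies in your persistence step. You propose a one-sided shooting to the symmetry set $\{w=0\}$, closed by an implicit-function-theorem argument whose transversality estimate is to be supplied, ``uniformly in $\eps$,'' by the Exchange Lemma. This misapplies the Exchange Lemma: a type I-solution crosses $\{w=0\}$ \emph{during} the slow passage, exponentially close to the slow manifold $\SS_2^{r_2}$ (the drift of $\xi_2$ from $\frac{\sqrt3}{2}\delta-1$ to $0$ requires a passage time of order $(\delta\eps)^{-1}$, so the correct initial slope $w_0$ lies within a distance $\OO\big(\e^{-c/(\delta\eps)}\big)$ of the value corresponding to the stable foliation). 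The Exchange Lemma describes the tracked manifold only \emph{after} it has left a neighborhood of $\SS_2^{r_2}$, where it is $C^1$-close to the unstable foliation; it gives no information about the geometry of that manifold at its crossing of $\{w_2=0\}$, which occurs inside that neighborhood. Moreover, the shooting map $w_0\mapsto\xi\big|_{\{w=0\}}$ degenerates as $\eps\to0$ (its relevant domain shrinks, and its derivative grows, exponentially), so it is not a regular perturbation of any limiting map and no implicit-function-theorem argument ``in $(\eps,\delta)$'' anchored at the singular orbit can be applied to it. Making your one-sided formulation rigorous requires instead the explicit asymptotics of the passage map past the saddle, including the $\eps\ln$-type terms; this is exactly what the paper does for region $\mathcal{R}_2$ in the proof of \cref{prop-2} via the bifurcation equation \cref{bifeq}, and it is considerably more computational than what you sketch.

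The paper's proof of \cref{prop-1} avoids the difficulty by never imposing a condition at $\{w=0\}$: it uses the two-sided shooting. The boundary manifolds $\VV_{1_\eps}^{-}$ and $\VV_{1_\eps}^{+}$ are flowed forward and backward, respectively, into the section $\Sigma_1^{\rm out}$, where they are compared not with each other but with the stable and unstable foliations of the slow manifold (the curves $\FF_1^{{\rm in}-}$ and $\FF_1^{{\rm out}+}$). The relevant transversality is verified by an explicit computation through chart $K_1$ (using $\eps_1$ as independent variable, \cref{eq-25}--\cref{eq-40}): the tangent vectors are $(1,\delta/w^2)$ versus $(0,1)$, and transversality holds already at $\eps=0$, hence is automatically uniform. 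The Exchange Lemma is then invoked in its natural role -- an entering manifold transverse to $\FF_2^{\rm s}(\SS_2^{r_2})$ exits $C^1$-close to $\FF_2^{\rm u}(\SS_2^{r_2})$ -- so that it meets the backward-flowed boundary manifold transversely, yielding the unique connecting orbit; that this orbit crosses $\{w=0\}$ at $\xi=0$ is a consequence of symmetry, not a hypothesis of the argument. Finally, your concern about the regime $\delta\to0$ ($\lambda=\OO(1)$) is legitimate, but the resolution is not a refinement of Exchange Lemma estimates: since $\lambda\le1$ forces $\delta\ge\sqrt\eps$, the paper rescales $\delta=\sqrt\eps\,\tilde\delta$, which merely replaces the slow drift $\delta r_2u_2^4$ in \cref{eq-13c} by $\tilde\delta r_2^{3/2}u_2^4$ and leaves the entire construction unchanged.
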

\begin{remark}
The singular solution $\Gamma$ depends on $\lambda$ or, equivalently, on $\delta$. Interpreted 
in terms of $\delta$, the range for which singular solutions exist corresponds to 
$\delta\in\big[\sqrt\eps,\delta_1\big]$; recall~\cref{delta}.
\end{remark}

To prove \cref{prop-1}, we construct solutions corresponding to the
branch that is contained in region $\mathcal{R}_1$ for fixed $\lambda$ in the regime
considered here. For $\delta$ fixed, a unique singular orbit $\Gamma$ is determined in 
blown-up phase space by investigating the dynamics of the boundary value problem \{\cref{eq-7},\cref{eq-5b}\}
separately in charts $K_1$ and $K_2$, and by then combining the results obtained in these charts. 
Finally, the singular orbit $\Gamma$, which is essentially determined by the dynamics in chart $K_2$, is shown to persist for $\eps$ positive and sufficiently small.

%----------------------------------------------------------------------------
\subsubsection{Dynamics in chart $K_2$}
%----------------------------------------------------------------------------

The flow of \Cref{eq-7} from the section $\Sigma_2^{\rm in}$ back to itself,
whereby the sign of $w$ changes from negative to positive, is naturally described in chart $K_2$;
cf.~\cref{fig:singorb}.

Recalling that $r_2=\eps$, we observe that \Cref{eq-13} constitutes a fast-slow system in 
the standard form of geometric singular perturbation theory \cite{Fe79, GSPT, Kue}, with $(u_2,w_2)$ the fast variables and
$\xi_2$ the slow variable. The fast system is given by \cref{eq-13}, whence the corresponding 
slow system is obtained by a rescaling of the independent variable with $r_2$:
\begin{subequations}\label{eq-47}
\begin{align}
r_2\dot{u}_2 &=u_2^4w_2, \\
r_2\dot{w}_2 &=u_2^2-1, \\
\dot{\xi}_2 &=\delta u_2^4, \\
\dot{r}_2 &=0.
\end{align}
\end{subequations}
The associated layer and reduced problems, which are obtained by setting $r_2=0$ in~\cref{eq-13} 
and~\cref{eq-47}, respectively, read
\begin{subequations}\label{eq-35}
\begin{align}
u_2' &=u_2^4w_2, \label{eq-35a}\\
w_2' &=u_2^2-1, \label{eq-35b}\\
\xi_2' &=0, \\
r_2' &=0
\end{align}
\end{subequations}
and
\begin{subequations}\label{eq-36}
\begin{align}
0 &=u_2^4w_2, \\
0 &=u_2^2-1, \\
\dot{\xi}_2 &=\delta u_2^4, \label{eq-36c} \\
\dot{r}_2 &=0,
\end{align}
\end{subequations}
respectively. (We note that the $(u_2,w_2)$-subsystem \{\cref{eq-35a},\cref{eq-35b}\} is 
precisely equal to \Cref{xy}.) The critical manifold for \Cref{eq-36} 
is given by the line
\begin{align}\label{eq-24}
\SS_2^0:=\set{(1,0,\xi_2,0)}{\xi_2\in[\xi_-,\xi_+]},
\end{align}
where the constants $\xi_\mp$ are defined as before.
\begin{remark}
While steady states are also found for $u_2=-1$ in \cref{eq-13}, these states are irrelevant,
since $u_2$ and $r_2$ are both non-negative and since $\{u_2=0\}$ is an invariant hyperplane for 
\cref{eq-13} which the flow cannot cross.
\end{remark}
Linearization of \cref{eq-35} about the critical manifold $\SS_2^0$
shows that any point $Q_2=(1,0,\xi_2,0)\in\SS_2^0$ is a saddle, with Jacobian
\begin{align*}
\Bigg[\begin{array}{cc}
4u_2^3w_2 & u_2^4 \\
2u_2 & 0
\end{array}\Bigg]\Bigg|_{(u_2,w_2)=(1,0)}=\Bigg[\begin{array}{cc}
0 & 1 \\
2 & 0
\end{array}\Bigg]
\end{align*}
and eigenvalues $\pm\sqrt{2}$. Hence, the manifold $\SS_2^0$ is normally hyperbolic. The
reduced flow thereon is described by $\dot{\xi}_2=\delta$, which corresponds
to a constant drift in the positive \mbox{$u_2$-direction} with speed $\delta$.

To describe the integrable layer flow away from $\SS_2^0$, we introduce $u_2$ as the independent 
variable, dividing \cref{eq-35b} formally by \cref{eq-35a}:
\begin{align*}
\frac{{\rm d} w_2}{{\rm d} u_2}=\frac{u_2^2-1}{u_2^4w_2(u_2)}.
\end{align*}
Solving the above equation with $w_2(1)=0$, we find
\begin{align}\label{eq-27}
w_2^\mp(u_2)=\mp\sqrt{\frac43-\frac2{u_2}+\frac2{3u_2^3}}.
\end{align}
In particular, it follows from \cref{eq-27} that, for any fixed choice of $\xi_2$, 
the stable and unstable manifolds of $Q_2$ can be written as graphs over $u_2$:
\begin{subequations}\label{eq-28}
\begin{align}
\WW_2^{\rm s}(Q_2) &=\set{(u_2,w_2^-(u_2),\xi_2,0)}{u_2\in[1,\infty)}, \\
\WW_2^{\rm u}(Q_2) &=\set{(u_2,w_2^+(u_2),\xi_2,0)}{u_2\in[1,\infty)}.
\end{align}
\end{subequations}
We have the following result.
\begin{lemma}\label{lem-1}
Let $r_2\in(0,r_0)$, with $r_0$ positive and sufficiently small. Then, the following
statements hold for \Cref{eq-47}:
\begin{enumerate}
\item The normally hyperbolic critical manifold $\SS_2^0$ perturbs to a slow manifold 
\begin{align*}
\SS_2^{r_2}=\set{(1,0,\xi_2,r_2)}{\xi_2\in[\xi_-,\xi_+]},
\end{align*}
where $\xi_\mp$ are appropriately chosen constants. In particular, we emphasize that
$(u_2,w_2)=(1,0)\in\SS_2^{r_2}$. 
\item The corresponding stable and unstable foliations $\FF_2^{\rm s}(\SS_2^{r_2})$ and
$\FF_2^{\rm u}(\SS_2^{r_2})$ are identical to $\FF_2^{\rm s}(\SS_2^0)$ and
$\FF_2^{\rm u}(\SS_2^0)$, except for their constant $r_2$-component. For $r_2\in[0,r_0)$ fixed, 
these foliations may be written as
\begin{subequations}\label{eq-29}
\begin{align}
\FF_2^{\rm s}(\SS_2^{r_2})
&=\set{(u_2,w_2^-(u_2),\xi_2,r_2)}{u_2\in[1,\infty),\ \xi_2\in[\xi_-,\xi_+]}\quad\text{and} \label{eq-29a} \\
\FF_2^{\rm u}(\SS_2^{r_2})
&=\set{(u_2,w_2^+(u_2),\xi_2,r_2)}{u_2\in[1,\infty),\ \xi_2\in[\xi_-,\xi_+]}. \label{eq-29b}
\end{align}
\end{subequations}
\end{enumerate}
\end{lemma}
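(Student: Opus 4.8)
The plan is to exploit the fact that the $(u_2,w_2)$-subsystem \{\cref{eq-13a},\cref{eq-13b}\} --- equivalently, the integrable system \cref{xy} --- is autonomous and completely decoupled from both $\xi_2$ and $r_2$. This product structure is the decisive feature: it will let me replace the generic $\mathcal{O}(r_2)$ estimates of Fenichel theory by exact identities, which is precisely what the two assertions of the lemma claim.

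For the first assertion, I would note that $(u_2,w_2)=(1,0)$ is an equilibrium of the $(u_2,w_2)$-subsystem for every value of the remaining variables, and that it persists without any correction as $r_2$ varies, since that subsystem does not depend on $r_2$. Evaluating the slow vector field \cref{eq-47} along $(1,0,\xi_2,r_2)$ with $r_2>0$ gives $\dot u_2=\dot w_2=\dot r_2=0$ and $\dot\xi_2=\delta$; hence the segment $\SS_2^{r_2}=\{(1,0,\xi_2,r_2)\}$ is locally invariant, with orbits on it drifting in the positive $\xi_2$-direction at constant speed $\delta$, in agreement with the reduced flow. Normal hyperbolicity is inherited directly from $\SS_2^0$: the $(u_2,w_2)$-block of the linearization is independent of $r_2$, so the eigenvalues $\pm\sqrt2$ already computed for $\SS_2^0$ persist unchanged, and $(u_2,w_2)=(1,0)\in\SS_2^{r_2}$ as claimed.

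For the second assertion, I would recall that the stable and unstable manifolds of the saddle $(1,0)$ in the $(u_2,w_2)$-plane form the level set $\{H=\frac23\}$ of the conserved quantity $H(u_2,w_2)=\frac12 w_2^2+u_2^{-1}-\frac13 u_2^{-3}$ of \cref{xy}; solving $H=\frac23$ reproduces exactly the graphs $w_2^\mp(u_2)$ of \cref{eq-27}, that is, the manifolds $\WW_2^{\rm s}(Q_2)$ and $\WW_2^{\rm u}(Q_2)$ of \cref{eq-28}. Because these curves involve neither $r_2$ nor $\xi_2$, the stable and unstable foliations of $\SS_2^{r_2}$ are obtained simply by taking the product of $\WW_2^{\rm s}(Q_2)$ and $\WW_2^{\rm u}(Q_2)$ with the interval $\xi_2\in[\xi_-,\xi_+]$ and fixing the $r_2$-coordinate at its positive value. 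This yields \cref{eq-29} verbatim and shows that the foliations coincide with the unperturbed ($r_2=0$) ones except for their constant $r_2$-component.

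The substance of the proof is really the recognition that the product structure of \cref{eq-13} trivializes the otherwise delicate Fenichel perturbation: both the slow manifold and its invariant foliations are exact, not merely $\mathcal{O}(r_2)$-accurate. The only care needed is the standard remark that $\SS_2^{r_2}$ is only \emph{locally} invariant, since the slow drift $\dot\xi_2=\delta>0$ eventually carries orbits past $\xi_+$; this is handled, as usual in geometric singular perturbation theory, by restricting to the compact $\xi_2$-interval $[\xi_-,\xi_+]$.
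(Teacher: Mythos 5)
Your proof is correct and follows essentially the same route as the paper's: both arguments rest on the observation that the $(u_2,w_2)$-subsystem decouples from $\xi_2$ and $r_2$, so the plane $\{(u_2,w_2)=(1,0)\}$ is invariant for every $r_2$ and the Fenichel perturbation of $\SS_2^0$ and of its foliations is exact rather than merely $\mathcal{O}(r_2)$-close. The paper compresses this into a one-line appeal to standard geometric singular perturbation theory plus the invariance of that plane, while you additionally spell out the conserved quantity $H$ recovering \cref{eq-27} and the $r_2$-independence of the eigenvalues $\pm\sqrt{2}$; these are elaborations of the same idea, not a different argument.
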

\begin{proof}
Both statements follow immediately from standard geometric singular perturbation theory~\cite{Fe79}, in
combination with the preceding analysis; in particular, the fact that the plane 
$\{(u_2,w_2)=(1,0)\}$ is invariant for \Cref{eq-13} 
irrespective of the choice of 
$r_2$ implies that the restrictions of $\SS_2^{r_2}$ and $\SS_2^0$ to $(u_2,w_2,\xi_2)$-space 
do not depend on $r_2$.
\end{proof}
\begin{remark}
The fast-slow structure of \Cref{eq-13} is very simple, since the $(u_2,w_2)$-subsystem
\{\cref{eq-13a},\cref{eq-13b}\} decouples from \Cref{eq-13c}. Even for $\eps>0$, the fast dynamics is determined by that integrable planar system, and organized by the saddle point at $(1,0)$
and the stable and unstable manifolds thereof. The slow flow on the slow manifold $\SS_2^{r_2}$
is just the drift given by $\dot{\xi}=\delta$. 
\end{remark}

In the limit as $u_2 \to \infty$, $w_2^\mp(u_2)$ converges to $w_2^\mp(\infty)=\mp \frac2{\sqrt3}$; recall~\cref{eq-27}.
Transforming the stable manifold $\WW_2^{\rm s}(Q_2)$ and the unstable manifold 
$\WW_2^{\rm u}(Q_2)$ to chart $K_1$, via the coordinate change $\kappa_{21}$ defined 
in~\cref{eq-34}, we see that these manifolds limit on the points 
$\big(0,\mp\frac2{\sqrt3},\xi_1,0\big)$, respectively, for $\xi_1$ fixed; see \cref{fig:singorb}.

%----------------------------------------------------------------------------
\subsubsection{Dynamics in chart $K_1$}
%----------------------------------------------------------------------------

The portions of the singular orbit $\Gamma$ corresponding to the flow
between two sets of boundary conditions that are located at $\xi=\mp1$ and 
the section $\Sigma_1^{\rm out}$ are studied in chart $K_1$.
A simple calculation reveals that \Cref{eq-11} admits a line of steady states at
\begin{align}\label{eq-14}
\SS_1^0:=\set{(0,0,\xi_1,1)}{\xi_1\in[\xi_-,\xi_+]},
\end{align}
as well as the plane of steady states
\begin{align}\label{eq-15}
\pi_1:=\set{(0,w_1,\xi_1,0)}{w_1\in[w_-,w_+]\text{ and }\xi_1\in[\xi_-,\xi_+]};
\end{align}
here, $w_\mp$ and $\xi_\mp$ are defined as in \cref{eq-20}. (Another 
set of equilibria, with $\eps_1=-1$, is irrelevant to us due to our assumption that 
$r_1$ and $\eps_1$ are both non-negative.)
The line $\SS_1^0$ corresponds to the saddle equilibrium at $(\hat u,w)=(1,0)$ of 
\Cref{xy}, and coincides with the critical manifold $\SS_2^0$ introduced in chart $K_2$;
cf.~\Cref{eq-24}.

In chart $K_1$, the singular limit of $\eps=0$ corresponds to either $r_1=0$ or $\eps_1=0$
in \Cref{eq-11}, which yields the following two limiting systems in the corresponding
invariant hyperplanes:
\begin{subequations}\label{eq-16}
\begin{align}
r_1' &=0, \\
w_1' &=\eps_1(1-\eps_1^2), \label{eq-16b} \\
\xi_1' &= 0, \\
\eps_1' &=-\eps_1w_1 \label{eq-16d}
\end{align}
\end{subequations}
and
\begin{subequations}\label{eq-17}
\begin{align}
r_1' &=r_1w_1, \label{eq-17a} \\
w_1' &=0, \\
\xi_1' &=\delta r_1, \label{eq-17c} \\
\eps_1' &=0,
\end{align}
\end{subequations}
respectively. 
\Cref{eq-16} is equivalent to \Cref{eq-35} 
in chart $K_2$ under the coordinate change $\kappa_{21}$ defined in~\cref{eq-34}; these equations describe the portion of the singular orbit $\Gamma$ in chart $K_1$ that is located between 
$\Sigma_1^{\rm out}$ and the hyperplane $\{\eps_1=0\}$.
\Cref{eq-17}, on the other hand, determines the
portion of the singular orbit which connects the hyperplane $\{r_1=0\}$ with the boundary conditions imposed at
$r_1=1$. Hence, we first focus our attention on that limiting system.

The value of $w_1$ in \Cref{eq-17} is constant: $w_1\equiv w_0$, for some constant 
$w_0$. Since $w_0$ must match the $w_2$-value obtained in the limit $u_2\to\infty$
in~\cref{eq-27} in chart $K_2$, see \cref{fig:singorb},
$w_1\equiv\mp\frac{2}{\sqrt3}$ must hold in the hyperplane $\{\eps_1=0\}$.
The corresponding orbits of~\cref{eq-17} are then easily found by dividing \cref{eq-17c} 
formally by \cref{eq-17a}: $\frac{{\rm d}\xi_1}{{\rm d}r_1}=\frac{\delta}{w_0}$.
For any initial condition $\xi_1(1)=\xi_0$, the solution to that equation reads 
\begin{align}\label{eq-44}
\xi_1(r_1)=\frac{\delta}{w_0}(r_1-1)+\xi_0.
\end{align}
The boundary conditions in~\cref{eq-5b} imply $\xi_0=\mp1$; hence, and since 
$w_0=\mp\frac{2}{\sqrt3}$, we obtain
\begin{align}\label{eq-44b}
\xi_1^\mp(r_1)=\mp\frac{\sqrt3}2\delta(r_1-1)\mp1.
\end{align}
Any orbit of \cref{eq-17} can then be written as
\begin{align}\label{eq-45}
\bigg\{\bigg(r_1,\mp \frac2{\sqrt3},\xi_1^\mp(r_1),0\bigg)\, \bigg|\, r_1\in[0,1]\bigg\}.
\end{align} 

Orbits of the integrable \Cref{eq-16} can be found by
introducing $\eps_1$ as the independent variable: dividing \cref{eq-16b} formally by \cref{eq-16d},
we obtain \mbox{$\frac{{\rm d}w_1}{{\rm d}\eps_1}=-\frac{1-\eps_1^2}{w_1(\eps_1)}$}, which can be 
solved explicitly with $w_1(0)=\mp \frac2{\sqrt3}$ to yield
\begin{align}\label{eq-19}
w_1^\mp(\eps_1)=\mp\sqrt{\frac43-2\eps_1+\frac23\eps_1^3},
\end{align}
where the sign in \cref{eq-19} equals that of the initial $w_1$-value.
(We remark that \cref{eq-19} corresponds to \Cref{eq-27}, after transformation to 
$K_1$-coordinates.) 
The corresponding values of $\xi_1$ are constant, and must equal the respective values
of $\xi_1^\mp(r_1)$ in~\cref{eq-44b} at $r_1=0$, {\it i.e.},
\begin{equation}
 \xi_1^\mp(0)= \pm \frac{\sqrt3}2 \delta \mp 1.
\end{equation}
\begin{remark} \label{rem:delta}
For $\delta=\frac2{\sqrt3}$, it follows that $\xi_1^\mp(0)=0$, {\it i.e.}, we obtain a singular orbit of 
type II; see \cref{fig:soltypII,fig:3lam}. Hence, we must assume
$\delta<\frac2{\sqrt3}$ in the statement of \cref{prop-1}.
\end{remark}
Any orbit of \cref{eq-16} can thus be represented as
\begin{align}\label{eq-32}
\set{(0,w_1^\mp(\eps_1),\xi_1^\mp(0),\eps_1)}{\eps_1\in[0,\sigma]},
\end{align}
where $\sigma$ is as in the definition of the section $\Sigma_1^{\rm out}$; recall~\cref{eq-20}.

Concatenation of the two orbit segments defined in \Cref{eq-45,eq-32} 
with the respective signs will yield the singular orbits $\Gamma_1^-$ and $\Gamma_1^+$, which are located between the sections $\mathcal{V}_{1_0}^-$ and $\Sigma_1^{\rm out}$ and $\Sigma_1^{\rm out}$ and $\mathcal{V}_{1_0}^+$, respectively. Here, 
\begin{align}\label{eq-60bis}
\VV_{1_0}^\mp:=\set{(1,w,\mp1,0)}{w\in I^\mp},
\end{align}
with $I^\mp$ being appropriately defined neighborhoods of the points $w_0^-=-\frac2{\sqrt3}$ and \mbox{$w_0^+=\frac2{\sqrt3}$}, respectively; see~\cref{fig:singorb}.

%----------------------------------------------------------------------------
\subsubsection{Singular orbit $\Gamma$}\label{sec:soO1}
%----------------------------------------------------------------------------

A singular orbit $\Gamma$ for \Cref{eq-7} can now be constructed on the basis of
the dynamics in charts $K_1$ and $K_2$, by 
taking into account the corresponding boundary conditions in \Cref{eq-5b}.

After transformation to $K_1$, the manifolds $\WW_2^{\rm s}(Q_2)$ and $\WW_2^{\rm u}(Q_2)$ 
meet the portions of the orbits $\Gamma_1^-$ and $\Gamma_1^+$, respectively, as given 
by~\cref{eq-32}, in the points
\begin{align}\label{eq:p1mp}
P_1^\mp=\bigg(0,\mp\frac2{\sqrt3},\pm\frac{\sqrt3}{2}\delta\mp1,0\bigg).
\end{align}
These points are contained in the two lines
\begin{subequations}
\begin{align}
\ell_1^-&=\set{(0,-\tfrac2{\sqrt3},\xi_1,0)}{\xi_1\in[\xi_-,\xi_+]}\quad\text{and} \\
\ell_1^+&=\set{(0,\tfrac2{\sqrt3},\xi_1,0)}{\xi_1\in[\xi_-,\xi_+]},
\end{align}
\end{subequations}
respectively,
in the hyperplane $\{\eps_1=0\}$, which are both located in the plane of steady states 
$\pi_1$; cf.~\cref{eq-15}.
The portions of the singular orbit $\Gamma$ that lie in chart $K_1$ can hence finally be written as
\begin{subequations}\label{eq-31}
\begin{align}
\begin{split}
\Gamma_1^- &=\set{(r_1,-\tfrac2{\sqrt3},-\tfrac{\sqrt3}{2} \delta (r_1-1)-1,0)}{r_1\in(0,1]}\cup P_1^- \\
& \cup\set{(0,-\sqrt{\tfrac43-2\eps_1+\tfrac23\eps_1^3},\tfrac{\sqrt3}{2} \delta-1,\eps_1)}{\eps_1\in(0,\sigma]}\quad\text{and}
\end{split} \label{eq-31a} \\
\begin{split}
\Gamma_1^+ &=\set{(r_1,\tfrac2{\sqrt3},\tfrac{\sqrt3}{2} \delta (r_1-1)+1,0)}{r_1\in(0,1]}\cup P_1^+ \\
& \cup\set{(0,\sqrt{\tfrac43-2\eps_1+\tfrac23\eps_1^3},-\tfrac{\sqrt3}{2} \delta +1,\eps_1)}{\eps_1\in(0,\sigma]}.
\end{split} \label{eq-31b}
\end{align}
\end{subequations} 
It remains to identify the portion of $\Gamma$ that is located in chart $K_2$; we denote the
corresponding singular orbit by $\Gamma_2$. We note that, for $r_2=0$, \Cref{eq-35} 
implies $\xi_2\equiv {\rm constant}$ on $\Gamma_2$. Given the definition of $\Gamma_1^\mp$ 
and the fact that $\xi_2=\xi_1$, we define the points
\begin{align}\label{Q2}
Q_2^\mp=\bigg(1,0,\pm \frac{\sqrt3}{2} \delta \mp1,0\bigg)\in\SS_2^0;
\end{align}
therefore, we may write
\begin{align}\label{eq-37}
\Gamma_2=\WW_2^{\rm s}(Q_2^-)\cup Q_2^-\cup\set{(1,0,\xi_2,0)}{\xi_2\in(\tfrac{\sqrt3}{2} \delta-1,
-\tfrac{\sqrt3}{2} \delta+1)}\cup Q_2^+\cup\WW_2^{\rm u}(Q_2^+),
\end{align}
recall \Cref{eq-28}, where $u_2$ now varies in the range $[1,\sigma^{-1}]$.
The orbit $\Gamma_2$ is hence defined as the union of three segments, with the first being 
the stable manifold of $Q_2^-$, the second corresponding to the slow drift in $\xi_2$ from 
$Q_2^-$ to $Q_2^+$, as shown in the inset of \cref{fig:singorb}, and the third being the 
unstable manifold of $Q_2^+$. 

The sought-after singular orbit $\Gamma$, which represents the singular solution to the boundary value problem \{\cref{eq-7},\cref{eq-5b}\}, can then be written as the union of $\Gamma_1^-$, $\Gamma_2$, 
and $\Gamma_1^+$ in blown-up space:
\begin{align*}
\Gamma := \Gamma_1^- \cup \Gamma_2 \cup \Gamma_1^+.
\end{align*}
A visualization of the orbit $\Gamma$ is given in \cref{fig:singorb}.

\begin{figure}[!h]
\centering
\includegraphics[scale=0.65]{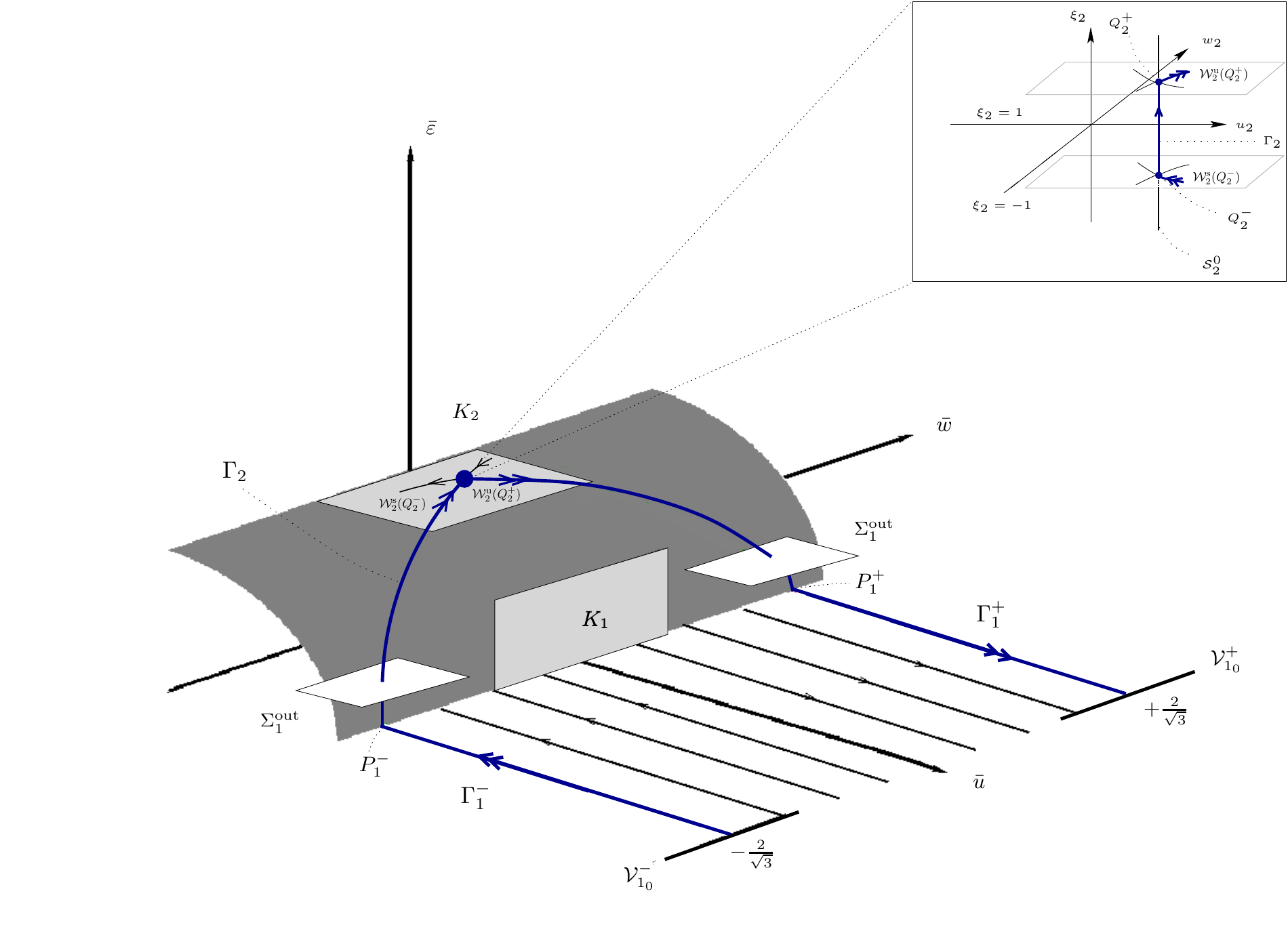}
\caption{Geometry of the singular orbit $\Gamma=\Gamma_1^-\cup\Gamma_2\cup\Gamma_1^+$ for
\Cref{eq-7} in blown-up space. The orbit $\Gamma$ is shown in blue, corresponding to a 
singular solution of type I. The inset resolves in detail the fast-slow structure
in $(u_2, w_2, \xi_2)$-space in chart $K_2$; in particular, the critical manifold $\SS_2^0$ and the
resulting singular connection between $Q_2^-$ and $Q_2^+$ is shown.}
\label{fig:singorb}
\end{figure}

%----------------------------------------------------------------------------
\subsubsection{Persistence of $\Gamma$ -- Proof of \cref{prop-1}}
%----------------------------------------------------------------------------

The proof of \cref{prop-1} is based on the shooting argument outlined in \Cref{sec:dynfor},
which is implemented by approximating the dynamics of \Cref{eq-7} for $\eps$ small
in the two coordinate charts $K_1$ and $K_2$. We begin by defining the two manifolds
\begin{align}\label{eq-60}
\VV_{1_\eps}^\mp:=\set{(1,w,\mp1,\eps)}{w\in I^\mp}\qquad\text{for }\eps\in[0,\eps_0),
\end{align}
which represent the boundary conditions in \cref{eq-5b} in chart $K_1$, with $r_1=1$
for $\xi_1=\mp1$; hence, it also follows that $\eps_1=\frac\eps{r_1}=\eps$ there. 
(We note that, for $\eps=0$, the manifolds $\VV_{1_\eps}^\mp$
in \cref{eq-60} reduce to $\VV_{1_0}^\mp$, respectively, as defined in \cref{eq-60bis}.)
The intervals $I^-$ and $I^+$ are defined as neighborhoods of the points 
$w_0^-=-\frac2{\sqrt3}$ and $w_0^+=\frac2{\sqrt3}$, respectively, as before.

We note that the manifolds $\VV_{1_\eps}^\mp$ are mapped onto each other by the transformation 
$(r_1,w_1,\xi_1,\eps_1)\mapsto(r_1,-w_1,-\xi_1,\eps_1)$, in accordance with the symmetry properties
of the boundary value problem \{\cref{eq-7},\cref{eq-5b}\}, as discussed in \Cref{sec:intro}. It is hence sufficient to consider 
the transition from $\VV_{1_\eps}^-$ to 
$\Sigma_1^{\rm out}$ under the flow of \cref{eq-11}, as its counterpart, the transition between 
$\Sigma_1^{\rm out}$ and $\VV_{1_\eps}^+$, can be obtained in a symmetric fashion.

We now introduce $\eps_1$ as the independent variable in
\Cref{eq-11}, whence
\begin{subequations}\label{eq-25}
\begin{align}
\frac{{\rm d}r_1}{{\rm d}\eps_1} &=-\frac{r_1}{\eps_1}, \label{eq-25a} \\
\frac{{\rm d}w_1}{{\rm d}\eps_1} &=-\frac{1-\eps_1^2}{w_1(\eps_1)}, \label{eq-25b} \\
\frac{{\rm d}\xi_1}{{\rm d}\eps_1} &=-\delta \frac{r_1(\eps_1)}{\eps_1w_1(\eps_1)}. \label{eq-25c}
\end{align}
\end{subequations}
Here, we remark that $w_1(\eps_1)$ remains non-zero for $\eps$ sufficiently small, as we know that
$w_1=\mp\frac2{\sqrt3}+\OO(\eps_1)\ne 0$ in the singular limit, {\it i.e.}, for $\eps=0$. Solving 
\Cref{eq-25a,eq-25b}, with initial condition $(1,w,-1,\eps)\in\VV_{1_\eps}^-$, 
we find
\begin{align}\label{eq-26}
r_1(\eps_1)=\frac\eps{\eps_1}\quad\text{and}\quad 
w_1^-(\eps_1)=-\sqrt{w^2+2(\eps-\eps_1)-\frac23(\eps^3-\eps_1^3)}.
\end{align}
Substituting the expressions 
in \cref{eq-26} into \cref{eq-25c} and expanding the result for $\eps_1$ small, we obtain
\begin{align*}
\frac{{\rm d}\xi_1}{{\rm d}\eps_1} &=\delta \frac\eps{\eps_1^2}
\frac1{\sqrt{w^2+2(\eps-\eps_1)-\frac23(\eps^3-\eps_1^3)}} \\
&=\delta \frac\eps{\eps_1}\frac1{\sqrt{w^2+2\eps-\frac23\eps^3}}
\bigg[\frac1{\eps_1}+\frac1{w^2+2\eps-\frac23\eps^3}\bigg]+\OO(1),
\end{align*}
which can be solved to the order considered here and evaluated in $\Sigma_1^{\rm out}$ --
{\it i.e.}, for $\eps_1=\sigma$ -- to yield
\begin{equation} \label{eq-112}
\xi_1^{{\rm out}-}=-1-\frac{\delta}{w} + \frac{\delta}{w^3}\eps\ln\eps+\OO(\eps).
\end{equation}
Similarly, evaluating \cref{eq-26} in $\Sigma_1^{\rm out}$, we find
\begin{multline*}
\big(r_1^{{\rm out}-},w_1^{{\rm out}-},\xi_1^{{\rm out}-},\eps_1^{{\rm out}-}\big) \\
=\bigg(\frac\eps\sigma,-\sqrt{w^2+2(\eps-\sigma)-\frac23(\eps^3-\sigma^3)},
-1-\frac{\delta}{w}+\OO(\eps\ln\eps),\sigma\bigg),
\end{multline*}
which defines a curve $(w_1^{{\rm out}-},\xi_1^{{\rm out}-})(w)$ that is parametrized by the initial
$w_1$-value $w$ in $\VV_{1_\eps}^-$. That curve, which we denote by $\VV_{1_\eps}^{{\rm out}-}$, 
is located in a two-dimensional subset of $\Sigma_1^{\rm out}$ and, specifically, in the 
$(w_1,\xi_1)$-plane, with $(r_1,\eps_1)$ fixed:
\begin{equation}\label{eq-40}
\VV_{1_\eps}^{{\rm out}-}:=\bigg\{\bigg(-\sqrt{w^2+2(\eps-\sigma)-\frac23(\eps^3-\sigma^3)},
-1-\frac{\delta}{w}+\OO(\eps\ln\eps)\bigg)\, \bigg| \, w\in I^-\bigg\}.
\end{equation}

It remains to study the stable foliation $\FF_2^{\rm s}(\SS_2^{r_2})$ in coordinate chart $K_2$, and to 
show that the intersection thereof with $\VV_{1_\eps}^{{\rm out}-}$ is transverse for $\eps$ 
sufficiently small. To that end, we recall the definition of $\FF_2^{\rm s}(\SS_2^{r_2})$ in
\cref{eq-29a}, which we restrict to the section $\Sigma_2^{\rm in}=\kappa_{12}(\Sigma_1^{\rm out})$:
taking $r_2(=\eps)$ fixed, as before, and evaluating $\FF_2^{\rm s}(\SS_2^{r_2})$ at 
$u_2=\sigma^{-1}$ defines a curve $\FF_2^{{\rm in}-}$ in $\Sigma_2^{\rm in}$ which is 
parametrized by $\xi_2\in[\xi_-,\xi_+]$ via 
\begin{align*}
(u_2^{\rm in},w_2^{\rm in},\xi_2^{\rm in},r_2^{\rm in})=\Big(\sigma^{-1},-\sqrt{\tfrac43-2\sigma
+\tfrac23\sigma^3},\xi_2,r_2\Big),
\end{align*}
for any $r_2\in[0,\rho\sigma]$; cf.~\cref{eq-21}. 
Transforming $\FF_2^{{\rm in}-}$ to chart $K_1$, we obtain the corresponding curve 
$\FF_1^{{\rm in}-}$:
\begin{equation}\label{eq-41}
\FF_1^{{\rm in}-}:=(w_1^{\rm out},\xi_1^{\rm out})=\bigg\{\bigg(-\sqrt{\frac43-2\sigma+\frac23\sigma^3},\xi_1\bigg)\, \bigg| \, \xi_1\in[\xi_-,\xi_+]\bigg\}.
\end{equation}
Comparing \Cref{eq-40,eq-41} and expanding
\begin{align*}
-\sqrt{w^2+2(\eps-\sigma)-\tfrac23(\eps^3-\sigma^3)}=w+\frac{\eps-\sigma}{w}+\OO[(\eps-\sigma)^2]
\end{align*}
and 
\begin{align*}
-\sqrt{\tfrac43-2\sigma+\tfrac23\sigma^3}=-\frac2{\sqrt3}+\frac{\sqrt3}2\sigma+\OO(\sigma^2),
\end{align*}
we conclude that $\VV_{1_\eps}^{{\rm out}-}$ and $\FF_1^{{\rm in}-}$ intersect in
some point
\begin{align*}
P_1^{{\rm out}-}=\bigg(-\frac2{\sqrt3}+\OO(\eps),-1-\frac{\delta}{w}+\OO(\eps\ln\eps)\bigg).
\end{align*}
As the corresponding tangent vectors in the $(w_1,\xi_1)$-plane are given by $(1,\frac{\delta}{w^2})$ 
and $(0,1)$ to leading order, that intersection is transverse for any $\eps$ small. More precisely,
transversality between $\VV_{1_\eps}^{{\rm out}-}$ and $\FF_1^{{\rm in}-}$ occurs already for 
$\eps=0$, {\it i.e.}, in $\{r_1=0\}$, which is sufficient for the Exchange Lemma to apply in chart 
$K_2$; cf.~\cref{fig:trint}. As these two curves perturb smoothly, the transversality of their 
intersection persists for $\eps \neq 0$, as well.

\begin{figure}[H]
\centering
\includegraphics[scale=0.9]{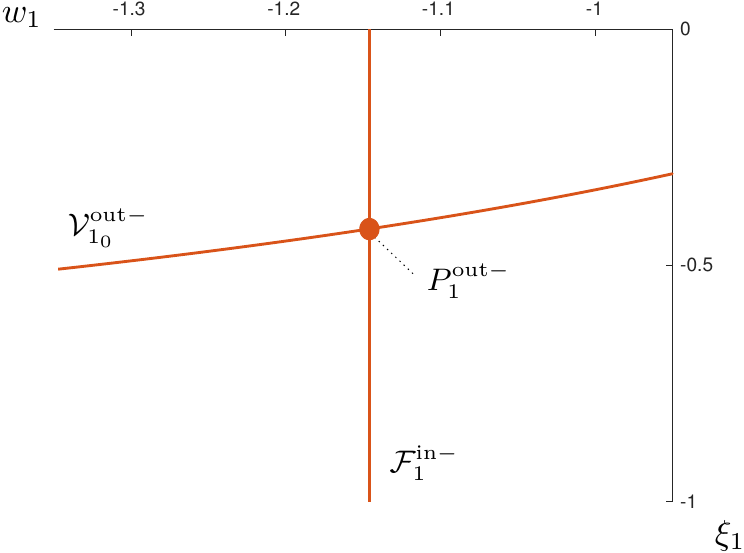}
\caption{Transverse intersection of the sets $\VV_{1_0}^{{\rm out}-}$ and $\FF_1^{{\rm in}-}$ in
$(w_1,\xi_1)$-space.}
\label{fig:trint}
\end{figure}

Next, and as stated above, the symmetry of \Cref{eq-11} implies the existence of a point
$P_1^{{\rm out}+}=\big(\frac2{\sqrt3}+\OO(\eps),1-\frac{\delta}{w}+\OO(\eps\ln\eps)\big)$ in 
$\Sigma_1^{\rm out}$ in which the curves 
\begin{align*}
\VV_{1_\eps}^{{\rm out}+}=\bigg\{\bigg(\sqrt{w^2+2(\eps-\sigma)-\frac23(\eps^3-\sigma^3)},
1-\frac{\delta}{w}+\OO(\eps\ln\eps)\bigg)\, \bigg| \, w \in I^+\bigg\}
\end{align*}
and 
\begin{align*}
\FF_1^{{\rm out}+}=\bigg\{\bigg(\sqrt{\frac43-2\sigma+\frac23\sigma^3},\xi_1\bigg)\, \bigg| \,
\xi_1\in[\xi_-,\xi_+]\bigg\}
\end{align*}
intersect transversely.

In summary, we have hence constructed a connection between the two manifolds of boundary conditions 
$\VV_{1_\eps}^-$ and $\VV_{1_\eps}^+$, as follows: in the singular limit of $\eps=0$, the image $\VV_{1_0}^{{\rm out}-}$ in $\Sigma_1^{\rm out}$ of $\VV_{1_0}^-$ under the forward 
flow intersects transversely the equivalent of the stable manifold $\WW_2^{\rm s}(Q_2)$ under 
the change of coordinates to chart $K_1$, namely, $\FF_1^{{\rm out}-}$.
Then, a slow drift occurs along the critical manifold $\SS_2^0$ until the flow leaves along the 
unstable manifold $\WW_2^{\rm u}(Q_2)$. In $\Sigma_1^{\rm out}$,
that manifold -- which corresponds to $\FF_1^{{\rm out}+}$ after transformation to $K_1$-coordinates 
-- again intersects transversely the image $\VV_{1_0}^{{\rm out}+}$ of the boundary manifold 
$\VV_{1_0}^+$ under the backward flow. The construction persists for $\eps \neq 0$ sufficiently 
small; in fact, it guarantees a transverse intersection between $\VV_{1_\eps}^{{\rm out}\mp}$ and 
$\FF_1^{{\rm out}\mp}$ when $0<\eps\ll1$.
Finally, the fact that the perturbed orbit approaching the stable foliation of the slow manifold 
$\SS_2^{r_2}$ will leave along the unstable foliation thereof is guaranteed by the Exchange Lemma.

The above argument allows us to obtain the portion of the branch of solutions in the bifurcation 
diagram which perturbs from $\mathcal{B}_1$ for $\delta \geq \hat{\delta}$, with $\hat{\delta}>0$ small; see \cref{fig:bdblup,fig:deleps}. The portion of
the branch perturbing from the part of $\mathcal{B}_1$ for $0 \leq \delta < \hat{\delta}$, as well as from $\mathcal{B}_2$, 
can be obtained in a similar spirit. However, as that regime involves the limit
as $\delta\to 0$, it requires further consideration. Setting $\delta=0$ does not affect our 
construction in chart $K_1$; however, it destroys the slow drift on $\SS_2^0$ in chart $K_2$ for 
$\eps=0$, cf.~\Cref{eq-36}. 

The segment $\mathcal{B}_2$ is associated to the regime where $\lambda=\OO(1)$. Singular 
solutions in that regime are of type I; see the right panel of
\cref{fig:soltypI}. We recall that $\delta=0$ occurs only for $\eps=0$, cf.~\cref{fig:deleps}, and that
$\delta$ is bounded below by $\sqrt\eps$. Hence, it is convenient to introduce the rescaling
\begin{equation}\label{delres}
\delta=\sqrt\eps \tilde{\delta},
\end{equation}
with $\tilde\delta \geq 1$, which we substitute into the governing \Cref{eq-11,eq-13} in charts $K_1$ and $K_2$, respectively.
In chart $K_1$, the rescaling in \cref{delres} yields the same dynamics as is obtained by setting $\delta=0$ in~\cref{eq-11}: the singular
limit of $\eps=0$ implies $\xi_1\equiv\mp1$ in the invariant hyperplane~$\{\eps_1=0\}$; 
cf.~\Cref{eq-17}. It follows that the value of $\xi$ in the transition from $u=1$ to $u=0$
does not change, as can also be seen in the corresponding type I-solution; see again the right panel of
\cref{fig:soltypI}. In chart $K_2$, introduction of the rescaling in~\cref{delres} again yields a 
fast-slow system,
\begin{subequations}\label{eq-13tilde}
\begin{align}
u_2' &=u_2^4w_2, \\
w_2' &=u_2^2-1, \\
\xi_2' &=\tilde{\delta} r_2^{\frac32}u_2^4, \\
r_2' &=0.
\end{align}
\end{subequations}
The only difference to the previous case of $\delta\neq 0$ is that the slow dynamics is now even
slower, as the small perturbation parameter in~\cref{eq-13} is given by $r_2^{3/2}$, instead of
by $r_2$. The global construction illustrated in this section is unaffected by that difference, though, 
as the techniques we have relied on -- such as, {\it e.g.}, the Exchange Lemma --
still apply. As $\tilde{\delta}$ grows to $\OO(r_2^{-1/2})$, the transition between the 
two regimes occurs.
\begin{remark}\label{rem:lam34}
We emphasize that the restriction on $\delta$ in the statement of \cref{prop-1} is due to 
the fact that we require $Q_2^-\ne Q_2^+$; cf.~also \cref{rem:delta}. Specifically, for the 
Exchange Lemma to apply, $\frac{\sqrt3}{2}\delta-1<-\frac{\sqrt3}{2}\delta+1$ must hold, which is 
equivalent to $\delta<\frac{2}{\sqrt{3}}=:\delta_\ast$. The case where that condition is violated is
studied in \cref{ssec:reg2} below, which covers region $\mathcal{R}_2$. In particular, 
it is shown there that \Cref{eq-7} then locally admits a pair of solutions which limit on 
a solution of type I and one of type II, respectively; these two singular solutions meet in a saddle-node 
bifurcation at $\delta=\delta_\ast$.
\end{remark}

%----------------------------------------------------------------------------
\subsubsection{Logarithmic switchback}\label{sec:logsw}
%----------------------------------------------------------------------------

In Lindsay's work~\cite{Li14}, logarithmic terms in $\eps$, as well as fractional powers of $\eps$, 
arise in the asymptotic expansions of solutions to \Cref{eq-2} as ``switchback'' terms
that need to be included during matching in order to ensure the consistency of these 
expansions~\cite{Li11}. In this subsection, we show that these terms are due to a resonance phenomenon in the blown-up vector field, see \cite{Po05}, hence establishing a connection between our dynamical systems approach and the method of matched asymptotic expansions. That connection has already been 
observed in various classical singular perturbation problems; examples include Lagerstrom's model 
equation for low Reynolds number flow~\cite{La84,La88,PS041}, front propagation in the Fisher-Kolmogorov-Petrowskii-Piscounov equation with cut-off \cite{Dumortier_2014,Dumortier_2007}, and the generalized 
Evans function for degenerate shock waves derived in~\cite{SS04}. \\
The occurrence of logarithmic switchback is necessarily studied in chart $K_1$, as 
the small parameter $\eps$ has to appear as a dynamic variable for resonances to be possible
between eigenvalues of the linearization about an appropriately chosen steady state, namely
$P_1^\mp$; recall \Cref{eq:p1mp}. Due to the symmetry properties of the 
corresponding vector field, it again suffices to restrict to the transition under the flow of \cref{eq-11} past $P_1^-=\big(0,-\frac2{\sqrt3},\frac{\sqrt3}{2}\delta-1,0\big)$ only.
\begin{proposition}
Let $\eps\in(0,\eps_0)$, with $\eps_0$ positive and sufficiently small. Then, \Cref{eq-11} admits the normal form
\begin{subequations} \label{eq:K1sysf}
\begin{align}
r_1' &=-r_1, \label{eq:K1sysfa} \\
W_1' &=\frac{3\sqrt3}8W_1^2\eps_1+\frac{27}{16}W_1\eps_1^2-\frac{5\sqrt3}{64}\eps_1^3+\OO(4),
\label{eq:K1sysfb} \\
\Xi_1' &=\frac{3\sqrt3}8\delta \eps+\frac{27}{16}\delta \eps W_1
+\frac{3\sqrt3}8\delta r_1W_1^2+\frac{27\sqrt3}{64}\delta \eps\eps_1
+\OO(4), \label{eq:K1sysfc} \\
\eps_1' &=\eps_1 \label{eq:K1sysfd}
\end{align}
\end{subequations}
in an appropriately chosen neighborhood of $P_1^-$. (Here, $\OO(4)$ denotes terms of order $4$ and upwards in $(r_1,W_1,\Xi_1,\eps_1)$.)
\end{proposition}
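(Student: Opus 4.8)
The plan is to establish the normal form \cref{eq:K1sysf} in two stages: an exact linearization of the hyperbolic directions obtained by a single time reparametrization, followed by a finite sequence of near-identity transformations that clear the obstructing low-order terms and expose the resonance responsible for switchback. A direct computation shows that the linearization of \cref{eq-11} at $P_1^-=\big(0,-\tfrac2{\sqrt3},\tfrac{\sqrt3}2\delta-1,0\big)$, recall \cref{eq:p1mp}, has spectrum $\big\{-\tfrac2{\sqrt3},0,0,\tfrac2{\sqrt3}\big\}$, with the $r_1$-direction stable, the $\eps_1$-direction unstable, and the $(w_1,\xi_1)$-plane a two-dimensional center. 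The structural fact that drives everything is that the stable and unstable eigenvalues satisfy $\lambda_{r_1}+\lambda_{\eps_1}=0$; equivalently, $\eps=r_1\eps_1$ is conserved along the flow, reflecting the blow-up relation $\eps=\bar r\bar\eps$ from \cref{eq-8}. This vanishing combination is the resonance generating the logarithmic switchback terms~\cite{Po05}, and it determines precisely which monomials cannot be removed.

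First I would linearize the hyperbolic part \emph{exactly}. Since $r_1'=r_1w_1$ and $\eps_1'=-\eps_1w_1$ in \cref{eq-11}, and since $w_1\approx-\tfrac2{\sqrt3}<0$ is bounded away from zero in a neighborhood of $P_1^-$, dividing the vector field by $-w_1$ -- a smooth, orientation-preserving reparametrization of orbits -- yields $r_1'=-r_1$ and $\eps_1'=\eps_1$ identically, which are exactly \cref{eq:K1sysfa} and \cref{eq:K1sysfd}; conservation of $\eps=r_1\eps_1$ is preserved. I would then translate $P_1^-$ to the origin via $w_1\mapsto w_1+\tfrac2{\sqrt3}$ and $\xi_1\mapsto\xi_1-\big(\tfrac{\sqrt3}2\delta-1\big)$, expand the resulting factor $(-w_1)^{-1}=-\tfrac{\sqrt3}2\big(1+\tfrac{\sqrt3}2w_1+\tfrac34w_1^2+\cdots\big)$, and read off the raw center equations, whose leading terms are $w_1'=\tfrac{\sqrt3}2\eps_1+\cdots$ and $\xi_1'=\tfrac{\sqrt3}2\delta r_1+\cdots$.

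Second I would apply near-identity transformations to the center variables, order by order through cubic order. The relevant homological divisor for a monomial $r_1^a\eps_1^bw_1^c\xi_1^d$ in a center equation is $b-a$, so monomials with $a=b$ -- powers of the conserved quantity $\eps=r_1\eps_1$ times $(w_1,\xi_1)$ -- carry zero divisor and cannot be removed. I would eliminate the obstructing terms, namely the linear $\eps_1$-term and the quadratic $\eps_1w_1$-term from the $w_1$-equation and the $r_1$- and $r_1w_1$-terms from the $\xi_1$-equation. The essential mechanism is that the substitution clearing the $\eps_1$- and $\eps_1w_1$-terms reintroduces, upon back-substitution into the $\xi_1$-equation, precisely the resonant contributions $\tfrac{3\sqrt3}8\delta\eps$ and $\tfrac{27}{16}\delta\eps W_1$; these have vanishing divisor, survive, and are exactly the switchback-generating terms. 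Collecting the remaining terms through cubic order and relabelling the transformed center variables as $(W_1,\Xi_1)$ then produces \cref{eq:K1sysfb} and \cref{eq:K1sysfc}, with an $\OO(4)$ remainder; on the invariant plane $\{r_1=0\}$ the $W_1$-equation reproduces the explicitly solvable dynamics of \cref{eq-16}, cf.~\cref{eq-19}.

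The main obstacle is the bookkeeping in this last stage: each homological cancellation feeds corrections into the higher-order terms of \emph{all} center equations, so pinning down the exact cubic coefficients requires carefully tracking how the successive substitutions propagate between the $W_1$- and $\Xi_1$-equations, while simultaneously confirming that the genuinely non-removable monomials are exactly those selected by the resonance $\lambda_{r_1}+\lambda_{\eps_1}=0$. Well-definedness of every transformation is guaranteed because the only vanishing divisors are the resonant ones we deliberately retain, and because $w_1$ remains bounded away from zero throughout the neighborhood, so that the time reparametrization and all expansions are smooth there.
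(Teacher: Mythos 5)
Your proposal is correct and follows essentially the same route as the paper's proof: divide out the positive factor $-w_1$ (a reparametrization of the independent variable that renders $r_1'=-r_1$, $\eps_1'=\eps_1$ exact), shift $P_1^-$ to the origin, expand, and clear the non-resonant linear and quadratic terms by near-identity transformations, retaining precisely the monomials protected by the eigenvalue relation $(-1)+1=0$ for $\eps=r_1\eps_1$, which is also how the paper identifies the resonance. The only blemish is a harmless sign slip in your expansion: after the shift one has $(-w_1)^{-1}=+\frac{\sqrt3}2\big(1+\frac{\sqrt3}2\tilde w_1+\frac34\tilde w_1^2+\cdots\big)$, consistent with the positive leading terms $\frac{\sqrt3}2\eps_1$ and $\frac{\sqrt3}2\delta r_1$ that you then correctly state.
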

\begin{proof}
The proof is based on a sequence of near-identity transformations in a neighborhood 
of $P_1^-$ which reduces \Cref{eq-11} to the system of equations in \cref{eq:K1sysf}. In a first step, 
we shift $P_1^-$ to the origin, introducing the new variables $\tilde w_1$ and $\tilde\xi_1$
via $w_1=-\frac2{\sqrt3}+\tilde w_1$ and $\xi_1=\xi_1^-+\tilde\xi_1$. (Here and in the following, we write $\xi_1^-=\frac{\sqrt3}{2}\delta-1$.) Then, we divide out
a positive factor of $\frac2{\sqrt3}-\tilde w_1(=-w_1)$ from the right-hand sides in the resulting 
equations, which corresponds to a transformation of the independent variable that leaves the
phase portrait unchanged:
\begin{subequations}\label{eq-11n}
\begin{align}
r_1' &=-r_1, \\
\tilde w_1' &=\frac{\eps_1(1-\eps_1^2)}{\frac2{\sqrt3}-\tilde w_1}, \label{eq-11nb} \\
\tilde\xi_1' &=\delta \frac{r_1}{\frac2{\sqrt3}-\tilde w_1}, \label{eq-11nc} \\
\eps_1' &=\eps_1.
\end{align}
\end{subequations}
Next, we expand $\big(\tfrac2{\sqrt3}-\tilde{w_1}\big)^{-1}=\tfrac{\sqrt3}2\big(1-\tfrac{\sqrt3}2\tilde w_1\big)^{-1}=\tfrac{\sqrt3}2\big(1+\tfrac{\sqrt3}2\tilde w_1+\tfrac34\tilde w_1^2+\OO(w_1^3)\big)$ in~\Cref{eq-11nb,eq-11nc}, whence
\begin{align*}
\tilde w_1' &=\frac{\sqrt3}2\eps_1\bigg(1+\frac{\sqrt3}2\tilde w_1+\frac34\tilde w_1^2-\eps_1^2\bigg)+\OO(4), \\
\tilde\xi_1' &=\frac{\sqrt3}2\delta r_1\bigg(1+\frac{\sqrt3}2\tilde w_1+\frac34\tilde w_1^2\bigg)+\OO(4).
\end{align*}
Since none of the terms in the $\tilde w_1$-equation above are resonant, they can be removed by a sequence of near-identity transformations. For instance, setting $\tilde w_1=\hat w_1+\frac{\sqrt3}2\eps_1$, we may eliminate the linear $\eps_1$-term from 
that equation, whence
\begin{align*}
\hat w_1'=\frac34\eps_1\hat{w_1}+\frac{3\sqrt3}8\eps_1^2+\frac{3\sqrt3}8\hat w_1^2\eps_1
+\frac98\hat w_1\eps_1^2-\frac{7\sqrt3}{32}\eps_1^3+\OO(4).
\end{align*}
Similarly, we can eliminate the linear $r_1$-terms in the $\tilde\xi_1$-equation by introducing $\tilde\xi_1=\hat\xi_1-\frac{\sqrt3}2\delta r_1$; the equation for $\hat\xi_1$ then reads
\begin{align*}
\hat\xi_1'=\frac34\delta r_1\hat w_1+\frac{3\sqrt3}8 \delta \eps+
\frac{3\sqrt3}8\delta r_1\hat w_1^2+\frac98\delta \eps\hat w_1
+\frac{9\sqrt3}{32}\delta \eps\eps_1+\OO(4).
\end{align*}
The term $\frac{3\sqrt3}8\delta\eps=\frac{3\sqrt3}8\delta r_1\eps_1$ in the above equation is now resonant of order $2$, as
$(-1)+0+0+1=0$ for the eigenvalues corresponding to the monomial $r_1\eps_1$ therein; 
hence, that term cannot be eliminated in general. (Here, we note that any factor of $\eps$ 
contributes a quadratic term to the asymptotics when considered in $(r_1,\hat w_1,\hat\xi_1,\eps_1)$-coordinates.)\\
A final sequence of near-identity transformations allows us to eliminate any non-resonant second-order terms from
\cref{eq-11n}. Specifically, introducing $W_1$ and $\Xi_1$ such that 
\begin{align*}
 \hat w_1&=W_1+\frac34W_1\eps_1+\frac{3\sqrt3}{16}\eps_1^2, \\
 \hat\xi_1&=\Xi_1-\frac34\delta r_1W_1,
\end{align*}
we obtain \Cref{eq:K1sysf}, as required.
\end{proof}
Next, we outline how the normal form in \Cref{eq:K1sysf} gives rise to logarithmic (``switchback") terms in the expansion for $\xi_1$ -- or, rather, for the value $\xi_1^{\rm out}$ thereof in the section $\Sigma_1^{\rm out}$, as defined in~\cref{eq-20b}; see also Section~\ref{ssec:reg1}. In the process, we refine the approximation for $\xi_1^{\rm out}$ that was derived in the proof of Propositions~\ref{prop-1}; recall \Cref{eq-40}.
% \textcolor{red}{(should I make here a comparison with Lindsay, why in his case these terms are visible
% in the solution expansion $u$ and for us in the spatial variable $\xi$? He had fractional powers in his
% new spatial variable $\xi \simeq \frac{x}{\eps^{3/2}} \dots $)}\textcolor{blue}{Yes, possibly at the end of this section, in a remark.}.
% \begin{figure}
%  \centering
%  \includegraphics[scale=1.0]{sigmainout}
%  \caption{Flow between $\Sigma_1^{\rm in}$ and $\Sigma_1^{\rm out}$.}
% \end{figure}
\begin{lemma}\label{lem:xi1out}
Let $\VV_{1_\eps}^-$ be defined as in \Cref{eq-60}, and consider the point $(1,w,-1,\eps)\in\VV_{1_\eps}^-$, with $w$ in a small neighbourhood of $w_0^-=-\frac2{\sqrt3}$. Then, the orbit 
of \Cref{eq-11} that is initiated in that point intersects the section $\Sigma_1^{\rm out}$ 
in a point $\big(\frac\eps\sigma,w_1^{\rm out},\xi_1^{\rm out},\delta\big)$, with
\begin{align}\label{eq-70}
\xi_1^{\rm out}=-1+\frac{\sqrt3}2\delta -\frac{3\sqrt3}8\delta \eps
\ln\eps+\OO(\delta \eps).
\end{align}
\end{lemma}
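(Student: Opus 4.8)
The plan is to integrate the normal form \cref{eq:K1sysf} in its $\Xi_1$-component, using $\eps_1$ as the independent variable. Two structural facts drive the computation: $r_1\eps_1=\eps$ is conserved (since $r_1'=-r_1$, $\eps_1'=\eps_1$, and the near-identity transformations leading to \cref{eq:K1sysf} leave $r_1$ and $\eps_1$ untouched), and $\eps_1$ increases monotonically along the orbit from $\eps_1=\eps$ on $\VV_{1_\eps}^-$ (where $r_1=1$) to $\eps_1=\sigma$ on $\Sigma_1^{\rm out}$. I would first transfer the data of \cref{lem:xi1out} into normal-form coordinates: the base point $P_1^-$ contributes the shift $\xi_1^-=\frac{\sqrt3}2\delta-1$, and tracing $(1,w,-1,\eps)$ through the coordinate changes $\xi_1\mapsto\tilde\xi_1\mapsto\hat\xi_1\mapsto\Xi_1$ gives $\hat\xi_1=0$ and $\Xi_1=\frac34\delta r_1W_1=\OO(\delta\eps)$ at the start. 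At the exit $r_1=\eps/\sigma=\OO(\eps)$, so inverting the coordinate changes adds only the corrections $-\frac34\delta r_1W_1$ and $-\frac{\sqrt3}2\delta r_1$, both $\OO(\delta\eps)$; hence $\xi_1^{\rm out}=\xi_1^-+\Xi_1^{\rm out}+\OO(\delta\eps)$.

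The core step is the integration of \cref{eq:K1sysfc}. Dividing through by $\eps_1'=\eps_1$ to pass to $\eps_1$ as independent variable and integrating over $[\eps,\sigma]$, I would single out the resonant monomial $\frac{3\sqrt3}8\delta\eps=\frac{3\sqrt3}8\delta r_1\eps_1$: upon division by $\eps_1$ it becomes $\frac{3\sqrt3}8\delta\eps/\eps_1$, the only integrand decaying like $\eps_1^{-1}$, so that
\begin{equation*}
\frac{3\sqrt3}8\delta\eps\int_\eps^\sigma\frac{{\rm d}\eps_1}{\eps_1}=\frac{3\sqrt3}8\delta\eps(\ln\sigma-\ln\eps)=-\frac{3\sqrt3}8\delta\eps\ln\eps+\OO(\delta\eps).
\end{equation*}
This is exactly the switchback term of \cref{eq-70}, with a coefficient that is manifestly independent of the initial slope $w$, and it originates precisely in the resonance $(-1)+0+0+1=0$ identified after the normal-form proposition.

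It then remains to verify that every other term integrates to $\OO(\delta\eps)$. The key ingredient is the sharp a priori estimate $W_1(\eps_1)=\OO(\eps)+\OO(\eps_1^2)$ along the orbit, which I would extract from the explicit layer solution \cref{eq-26} (or its singular limit \cref{eq-19}) for $w$ within $\OO(\eps)$ of $w_0^-$, or alternatively by a bootstrap on \cref{eq:K1sysfb}. Together with $r_1=\eps/\eps_1$, this bounds the term $\frac{27}{16}\delta\eps W_1/\eps_1$ by $\OO(\delta\eps)$ (its $\OO(\eps)$-part only yields $\OO(\delta\eps^2\ln\eps)$, which is still $\OO(\delta\eps)$), while $\frac{27\sqrt3}{64}\delta\eps\eps_1/\eps_1$ and the $\OO(4)$-remainder are immediately $\OO(\delta\eps)$. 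Adding the resonant contribution to the $\OO(\delta\eps)$ initial value of $\Xi_1$, the coordinate-change corrections, and the shift $\xi_1^-$ then yields \cref{eq-70}; as a consistency check, substituting $w=w_0^-=-\frac2{\sqrt3}$ into the cruder estimate \cref{eq-112} reproduces both the $\frac{\sqrt3}2\delta$ and the $-\frac{3\sqrt3}8\delta\eps\ln\eps$ coefficients.

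The step I expect to be the main obstacle is controlling the nonlinear term $\frac{3\sqrt3}8\delta r_1W_1^2$. After passing to $\eps_1$ as independent variable it carries the weight $r_1/\eps_1=\eps/\eps_1^2$, so a crude bound $W_1=\OO(1)$ would produce an integrand $\OO(\delta\eps/\eps_1^2)$ integrating to $\OO(\delta)$ -- far too large, and a potential source of a spurious second logarithm. The sharp estimate gives instead $W_1^2/\eps_1^2=\OO(\eps^2/\eps_1^2)+\OO(\eps)+\OO(\eps_1^2)$, whose integral over $[\eps,\sigma]$ is only $\OO(\eps)+\OO(1)$, making the contribution genuinely $\OO(\delta\eps)$. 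Establishing this decay of $W_1$ uniformly for $w$ near $w_0^-$ is the delicate point; the remainder reduces to elementary integration.
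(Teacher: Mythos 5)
Your proposal follows the paper's own strategy: pass to the normal form \cref{eq:K1sysf}, use the conserved quantity $r_1\eps_1=\eps$, and obtain the switchback term by integrating the resonant monomial $\frac{3\sqrt3}8\delta r_1\eps_1=\frac{3\sqrt3}8\delta\eps$ over a transition of length $\ln(1/\eps)$ in the time-like variable. Your bookkeeping of the nonresonant terms is in fact more explicit than the paper's: the paper simply truncates \cref{eq:K1sysfb,eq:K1sysfc} at second order and justifies the error by asserting $W_1=\OO(\eps)$ throughout, whereas you isolate the genuinely dangerous term $\delta r_1W_1^2$ and kill it with the sharp estimate $W_1=\OO(\eps)+\OO(\eps_1^2)$; your transfer of the initial data ($\hat\xi_1=0$ exactly at $r_1=1$, $\Xi_1=\OO(\delta\eps)$) and the consistency check against \cref{eq-112} at $w=-\frac2{\sqrt3}$ are also correct.

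The one genuine gap is where you start the normal-form integration. The normal-form proposition establishes \cref{eq:K1sysf} only in an appropriately chosen neighborhood of $P_1^-$, and the initial point $(1,w,-1,\eps)\in\VV_{1_\eps}^-$ lies far outside any such neighborhood: there $r_1=1$ and $\tilde\xi_1=-\frac{\sqrt3}2\delta$ are both $\OO(1)$ (recall that $\delta$ may be as large as $\approx\frac2{\sqrt3}$). In that regime the stated error control -- ``$\OO(4)$ in $(r_1,W_1,\Xi_1,\eps_1)$'' -- is vacuous, since an order-four term need not be small when $r_1=\OO(1)$, and nothing in the statement of the proposition excludes remainders such as $r_1^4$ or $r_1^2W_1\Xi_1$. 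Your integration from $\eps_1=\eps$ to $\eps_1=\sigma$, i.e.\ from $r_1=1$ down to $r_1=\eps/\sigma$, therefore invokes \cref{eq:K1sysf} outside its certified domain. The computation does survive, but only because the remainders happen to inherit the special structure visible in \cref{eq-11nb,eq-11nc}: every term of the $\Xi_1$-equation carries a factor $\delta r_1$ times terms at least cubic in $(W_1,\eps_1)$, and every term of the $W_1$-equation carries a factor $\eps_1$, uniformly for $r_1\in[0,1]$; establishing this requires reopening the normal-form derivation, which you do not do. The paper sidesteps the issue entirely: it applies the normal form only between $\Sigma_1^{\rm in}$ (at $r_1=\rho$ small) and $\Sigma_1^{\rm out}$, and treats the outer segment $r_1\in[\rho,1]$ with the unexpanded equation \cref{eq-11nc}, using $r_1$ as the independent variable, to get $\xi_0=-1-\frac{\sqrt3}2\delta(\rho-1)+\OO(\delta\eps)$; the coefficient $\frac{\sqrt3}2\delta$ in \cref{eq-70} is then assembled from this outer drift together with the entry value $\Xi_0$, rather than from your exact cancellation at $r_1=1$. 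Either patch -- estimating the outer segment directly as the paper does, or proving the above remainder structure on all of $r_1\in[0,1]$ -- closes the gap; as written, the step is unjustified.
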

\begin{proof}
\Cref{eq:K1sysfa,eq:K1sysfd} can be solved explicitly for $r_1$ and 
$\eps_1$, which gives
\begin{align}\label{eq:r1e1}
r_1(\tilde x)=\rho\mathrm{e}^{-\tilde x}\quad\text{and}\quad
\eps_1(\tilde x)=\frac\eps\rho\mathrm{e}^{\tilde x};
\end{align}
here, $\tilde x$ denotes the rescaled independent variable that was introduced in the derivation
of \cref{eq:K1sysf}. Hence, the transition ``time" $\widetilde X$ between the 
sections $\Sigma_1^{\rm in}$ and $\Sigma_1^{\rm out}$ under the flow of \Cref{eq:K1sysf}
is given by
\begin{align}\label{eq:xel}
\widetilde X=\ln{\frac{\rho\delta}{\eps}}.
\end{align}
For the sake of simplicity, we will henceforth only consider terms of up to order $2$ in \Cref{eq:K1sysfb,eq:K1sysfc}, which gives
\begin{align}\label{eq-80a}
W_1'=0\quad\text{and}\quad\Xi_1'=\frac{3\sqrt3}8\delta 
\end{align}
to that order. Hence, solving \Cref{eq-80a} for $W_1$ and $\Xi_1$ in forward time gives
\begin{align}\label{eq-81a}
W_1\equiv W_0\quad\text{and}\quad\Xi_1=\Xi_0+\frac{3\sqrt{3}}8\delta \eps\tilde x,
\end{align}
where $W_0=W_1(0)$ and $\Xi_0=\Xi_1(0)$ are constants that remain to be determined. \\
Undoing the above sequence of near-identity transformations -- i.e., reverting to the shifted variable $\tilde\xi_1$ -- we obtain
\begin{align}\label{eq:xi1}
\tilde\xi_1=\Xi_1-\frac{\sqrt3}2\delta r_1-\frac34\delta r_1W_1
=\Xi_0+\frac{3\sqrt{3}}8\delta \eps\tilde x
-\frac{\sqrt3}2\delta r_1-\frac34\delta r_1W_1.
\end{align}
Hence, we also need to undo the transformation for $W_1\equiv W_0$;
inverting the successive transformations for the variable $w_1$, we have
\begin{align}\label{eq-82a}
 w_1=-\frac2{\sqrt3}+\Big(1+\frac34\eps_1\Big)W_0+\frac{\sqrt3}2\eps_1+\frac{3\sqrt3}{16} \eps_1^2.
\end{align}
Since $w_1\to-\frac2{\sqrt3}$ in the singular limit as $\eps_1 \to 0$, it follows that $W_0=0$ to the
order considered here. In fact, expanding the expression for $w_1(\eps_1)$ in \Cref{eq-26}
and retracing the above sequence of normal form transformations 
$w_1\mapsto\tilde w_1\mapsto\hat w_1\mapsto W_1$, we may infer from \cref{eq-81a} that 
$W_0=\tilde w_0+\OO(\eps)$, where we have written $w_0=-\frac2{\sqrt3}+\tilde w_0$ in \cref{eq-26}.
As $\tilde w_0=\OO(\eps)$, by the proof of Proposition~\ref{prop-1},
we may conclude that $W_0=\OO(\eps)$.\\
Next, substituting into~\cref{eq:xi1} and noting that 
$\Xi_0=\tilde\xi_0+\frac{\sqrt3}2\delta\rho+\OO\big(\delta \eps\big)$ 
due to $r_1=\rho$ in $\Sigma_1^{\rm in}$, we obtain
\begin{align}\label{eq:xi12}
\tilde\xi_1=\tilde\xi_0+\frac{\sqrt3}2\delta (\rho-r_1)+\frac{3\sqrt3}8
\delta\eps\tilde x+\OO(\delta \eps).
\end{align}
Reverting to the original variable $\xi=\xi_1^-+\tilde\xi_1$, we then conclude that in 
$\Sigma_1^{\rm out}$,
\begin{align}\label{eq:xi1out}
\xi_1^{\rm out}=\xi_1(\widetilde X)=\xi_0+\frac{\sqrt3}2\delta \rho
-\frac{3\sqrt3}8\delta\eps\ln\eps+\OO(\delta \eps).
\end{align}
We emphasize that the resonant term $\frac{3\sqrt3}8\delta\eps$ in \cref{eq-81a} gives rise to
$\frac{3\sqrt3}8\delta\eps\tilde x$ in \cref{eq:xi12} after integration which, for $\tilde x=\widetilde X$, yields an $\eps\ln\eps$-term in the expansion for $\xi_1^{\rm out}$.
(Here, the error estimate
in \cref{eq:xi1out} is again due to the fact that $W_1=\OO(\eps)$ throughout.)\\
It remains to approximate $\xi_0$. To that end, we consider \Cref{eq-11nc}, rewritten
with $r_1$ as the independent variable: solving
\begin{align*}
\frac{{\rm d}\tilde\xi_1}{{\rm d}r_1}=\frac{{\rm d}\xi_1}{{\rm d}r_1}=-\frac{\delta }{\frac2{\sqrt3}-\tilde w_1}
=-\frac{\sqrt3}2\delta \big(1+\OO(\tilde w_1)\big)
\end{align*}
with $\xi_1(1)=-1$ and noting that $\tilde w_1=\OO(\eps)$, by \cref{eq-81a},  we find 
\begin{align*}
\xi_1(r_1)=-1-\frac{\sqrt3}2\delta (r_1-1)+\OO\big(\delta \eps\big)
\end{align*}
and, hence,
$\xi_0=\xi_1(\rho)=-1-\frac{\sqrt3}2\delta (\rho-1)+\OO\big(\delta \eps\big)$ which, in combination
with \cref{eq:xi1out}, yields \Cref{eq-70}, as claimed.
\end{proof}
\begin{remark}
The fact that \Cref{eq-11nc} is decoupled, in combination with the structure
of the above sequence of normal form transformations $\tilde w_1\mapsto\hat w_1\mapsto W_1$
-- which depends on $\eps_1$ only -- implies that no resonances will occur in the 
corresponding expansion for $w_1^{\rm out}$. In fact, such an expansion can immediately be
derived from \cref{eq-82a}.
\end{remark}
\begin{remark}\label{rem:xc}
One can show that \Cref{lem:xi1out} is consistent with Lindsay's results \cite[Section~3]{Li14}; in fact, up to a transformation of variables, the quantity $\xi_1^{\rm out}$ corresponds to the point $-x_c$ introduced there, with $\lambda_{0c}=\frac{m-1}{2(m-2)}=\frac34$ due to $m=4$ in our case:
\begin{equation} \label{eq:xi1outli}
 -x_c = -1+ \eps^{\frac12}\bar{x_c} = -1+\frac{\sqrt3}2\delta -\frac{3\sqrt3}8\sqrt{\frac\eps\lambda}\eps
\ln\eps+\OO(\delta \eps).
\end{equation}
\end{remark}

%----------------------------------------------------------------------------
\subsection{Region $\mathcal{R}_2$}\label[subsection]{ssec:reg2}
%----------------------------------------------------------------------------

For $\eps >0$, region $\mathcal{R}_2$ covers a small neighborhood of the point 
$B$ in $(\lambda,\Vert u\Vert_2^2)$-space; recall \cref{fig:bdsegm}. That region 
contains the portion of the branch of solutions in the bifurcation diagram which limit on 
solutions of types I and II as $\eps\to 0$; moreover, $\mathcal{R}_2$ establishes the connection 
with the branches of solutions that are contained in regions $\mathcal{R}_1$ and $\mathcal{R}_3$. 

According to the definition in~\cref{eq:R2}, the size of $\mathcal{R}_2$ is $\eps$-dependent; in 
particular, that region collapses onto a line as $\eps \to 0$. We will show that, for $0<\eps\ll 1$, a 
saddle-node bifurcation occurs in $\mathcal{R}_2$ at $\lambda=\lambda_\ast$,
as defined in \cite{Li14}; see \cref{fig:bdfolds}.

\begin{figure}[!ht]
\centering
\includegraphics[scale=1.0]{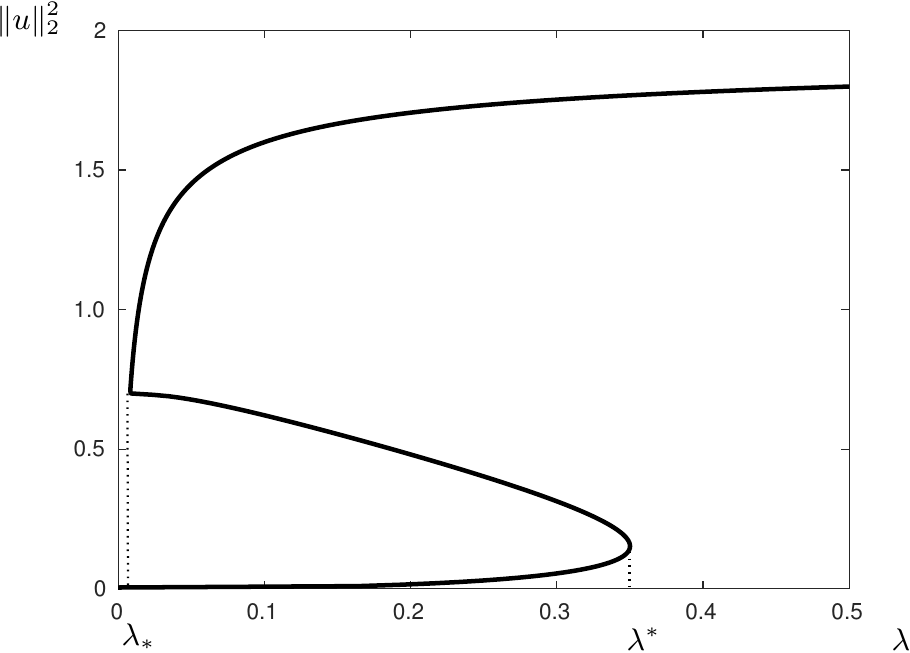}
\caption{Numerical bifurcation diagram showing solutions of~\cref{eq-2} for $\eps=0.01$. 
Saddle-node bifurcations occur at $\lambda_\ast$ and $\lambda^\ast$. }
\label{fig:bdfolds}
\end{figure}

Due to the singular dependence of $\lambda_\ast$ on the regularization parameter $\eps$, an 
accurate numerical approximation is difficult to obtain for small values of $\eps$. Using matched 
asymptotics, it was shown in~\cite{Li14} that $\lambda_{\ast}=\OO(\eps)$, 
with an expansion of the form
\begin{align*}
\lambda_{\ast}(\eps)=\lambda_{\ast 0}\eps+\lambda_{\ast 1}\eps^2\ln\eps+\lambda_{\ast 2}\eps^2 
+\OO(\eps^3).
\end{align*}
However, the coefficients $\lambda_{\ast i}$ remained undetermined there. Here, we 
confirm rigorously the structure of the above expansion, and we determine explicitly the values of the
coefficients $\lambda_{\ast i}$ therein for $i=0,1$. 
Moreover, we indicate how higher-order coefficients may be found 
systematically, and we identify the source of the logarithmic (``switchback") terms (in $\eps$) in the 
expansion for $\lambda_{\ast}$; cf.~\cref{prop-2} below.
\begin{remark}
While a saddle-node bifurcation is equally observed in the \linebreak{bi-Laplacian} case, recall~\Cref{LiBiLap}, 
Lindsay's work \cite{Li16} shows that the asymptotics of the associated $\lambda$-value 
$\lambda_\ast$ is far less singular in that case, allowing for a straightforward and 
explicit calculation of the corresponding coefficients.
\end{remark}

To leading order, $\lambda_\ast$ equals the abovementioned critical value $\frac34\eps$, 
which corresponds to $\delta_\ast=\frac2{\sqrt3}$ in terms of $\delta$. That critical $\delta$-value 
was not covered in our discussion of region $\mathcal{R}_1$ in the previous section, as the argument 
applied in that region failed there; cf.~\cref{rem:lam34}. Hence, a different argument is required
for analysing the local dynamics in a neighborhood of the saddle-node bifurcation point at 
$\delta_\ast$.

In a first step, we consider the existence of singular solutions for varying $\delta$; in particular, the 
existence of type II-solutions in region $\mathcal{R}_2$ is guaranteed by the following
\vspace{-.8cm}
\begin{lemma}\label{lem-II}
Let $\frac1{\sqrt{\lambda_2}}\leq\delta\leq \delta_1$, with $\delta_1<\frac2{\sqrt3}$. Then, a singular 
solution of type II exists if and only if $w_1=\mp \delta$ at $\xi_1=\mp 1$.
\end{lemma}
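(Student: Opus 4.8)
The plan is to reduce the problem, by evenness (\cref{rem:even}), to the half-interval $\xi\in[-1,0]$ and to track the singular ($\eps=0$) orbit in chart $K_1$, where it is confined to the invariant hyperplane $\{\eps_1=0\}$. Recall that a type II solution is distinguished by touchdown at a single point $\{u=0\}$, which by symmetry lies at $\xi=0$; in $K_1$ this means the orbit must approach the blow-up locus $\{r_1=0\}$ precisely at $\xi_1=0$, in contrast to a type I orbit, which reaches $\{r_1=0\}$ with slope $w_1=\mp\frac{2}{\sqrt3}$ and then connects, via $\SS_1^0$, to the slow drift along $\SS_2^0$ in chart $K_2$. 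On $\{\eps_1=0\}$ the governing equations collapse to \cref{eq-17}, so that $w_1\equiv w_0$ is constant and the $(r_1,\xi_1)$-projection obeys $\frac{{\rm d}\xi_1}{{\rm d}r_1}=\frac{\delta}{w_0}$, obtained by dividing \cref{eq-17c} by \cref{eq-17a}.

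First I would integrate this relation subject to $\xi_1=-1$ at $r_1=1$, which encodes the boundary condition \cref{eq-5b} at the left endpoint, to obtain $\xi_1(r_1)=-1+\frac{\delta}{w_0}(r_1-1)$. For $w_0<0$ the flow of \cref{eq-17} drives $r_1$ monotonically from $1$ down to $0$ (since $r_1'=r_1w_1<0$) while $\xi_1$ increases, so that the orbit limits onto the point $\big(0,w_0,-1-\frac{\delta}{w_0},0\big)$ of the plane of steady states $\pi_1$ in \cref{eq-15}. Touchdown at the centre, $\xi_1\to 0$, is therefore equivalent to $-1-\frac{\delta}{w_0}=0$, i.e.\ to $w_0=-\delta$. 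Since each step in this chain is an equivalence, the left-hand condition $w_1=-\delta$ at $\xi_1=-1$ is both necessary and sufficient for a type II orbit on $[-1,0]$; the reflection $(r_1,w_1,\xi_1,\eps_1)\mapsto(r_1,-w_1,-\xi_1,\eps_1)$ then supplies $w_1=+\delta$ at $\xi_1=+1$ on $[0,1]$, and the two half-orbits meet at the touchdown fibre $r_1=0,\ \xi_1=0$. This establishes the asserted ``if and only if''.

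Next I would check that $w_0=\mp\delta$ is consistent with the defining slope of a type II solution and with the admissible range of $\delta$. In the rescaled variable of \cref{eq-7} one has $w=\delta\,w_{\rm orig}$, so $w_1=\mp\delta$ is precisely the image of the type II slope $w_{\rm orig}=\mp1$ (in contrast, the type I slope $w_1=\mp\frac{2}{\sqrt3}$ corresponds to $|w_{\rm orig}|=\frac{2}{\sqrt{3}\,\delta}>1$ for $\delta<\frac{2}{\sqrt3}$, matching \cref{def:soltyp}). The upper restriction $\delta\le\delta_1<\frac{2}{\sqrt3}$ guarantees that $w_1=-\delta\ne-\frac{2}{\sqrt3}$, so that the limit point $\big(0,-\delta,0,0\big)\in\pi_1$ does not coincide with the basepoint of the saddle's stable manifold $w_1^-(\eps_1)$ in \cref{eq-19}; the type II orbit therefore terminates at a steady state of $\pi_1$ rather than flowing into $\SS_1^0$ and drifting through $K_2$, so that type I and type II remain distinct. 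At $\delta=\frac{2}{\sqrt3}$ the two coincide, which is exactly the degeneracy flagged in \cref{rem:delta,rem:lam34}. The lower bound $\frac{1}{\sqrt{\lambda_2}}\le\delta$ merely confines $\lambda$ to the range $[0,\eps\lambda_2]$ of region $\mathcal{R}_2$; recall \cref{eq:R2} and $\delta=\sqrt{\eps/\lambda}$.

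The hard part will not be the elementary integration but the geometric bookkeeping needed to make ``single-point touchdown'' precise in blown-up coordinates. In the desingularized flow of \cref{eq-17} the approach $r_1\to0$ occurs only as the rescaled time tends to infinity, even though the corresponding value $\xi=0$ of the original spatial variable is attained at a finite point; I would therefore need to argue carefully that the limit set of the left half-orbit is the single steady state $\big(0,-\delta,0,0\big)\in\pi_1$ at $\xi_1=0$, and that this is the touchdown point of a genuine type II solution rather than the onset of a drift along $\SS_2^0$. Confirming that this steady state is \emph{not} the saddle point on $\SS_1^0$ for $\delta<\frac{2}{\sqrt3}$ --- equivalently, that $w_1=\mp\delta$ lies strictly between $0$ and $\mp\frac{2}{\sqrt3}$ --- is precisely what separates the type II characterization from the type I one, and is where the care is required.
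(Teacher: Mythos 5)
Your core computation is correct, and it is essentially the paper's own argument: both proofs work in chart $K_1$, where the singular dynamics for $r_1>0$ reduces to the constant-slope system \cref{eq-17}, and both rest on the fact that the rescaling \cref{eq-6} turns the defining type II slope $w=\mp1$ of \cref{def:soltyp} into $w_1=\mp\delta$. Your route of deriving $w_0=-\delta$ from the requirement that touchdown occur at $\xi_1=0$ (via the explicit integral $\xi_1(r_1)=-1+\frac{\delta}{w_0}(r_1-1)$) is a slightly more explicit way of obtaining the same equivalence; the paper gets the "only if" direction immediately from the definition plus the rescaling, and your consistency checks with the type I slope and with the range of $\delta$ match the paper's reasoning.

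There is, however, one step you assert rather than prove, and it is precisely the ingredient the paper makes explicit: your two half-orbits do \emph{not} meet. The left half limits on $(0,-\delta,0,0)\in\pi_1$ and the reflected right half emanates from $(0,+\delta,0,0)\in\pi_1$; these are distinct equilibria in the fibre $\{r_1=0,\ \xi_1=0\}$, cf.~\cref{eq-15}. To obtain a single connected singular orbit one needs the connecting orbit on the blow-up cylinder, i.e.\ the heteroclinic orbit of the cylinder dynamics \cref{eq-16} joining $(w_1,\eps_1)=(-\delta,0)$ to $(+\delta,0)$ along the level set $w_1^2+2\eps_1-\frac23\eps_1^3=\delta^2$, at fixed $\xi_1=0$ because $\xi_1'=\delta r_1=0$ on $\{r_1=0\}$ by \cref{eq-11c}. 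This orbit turns around at the root $\eps_1^{\max}$ of $2\eps_1-\frac23\eps_1^3=\delta^2$, and such a root with $\eps_1^{\max}<1$ exists precisely because $\delta<\frac2{\sqrt3}$; at $\delta=\frac2{\sqrt3}$ the connection degenerates into one passing through the saddle on $\SS_1^0$ (the type I/type II coincidence of \cref{rem:delta}), and for $\delta>\frac2{\sqrt3}$ it fails altogether. So the hypothesis $\delta\le\delta_1<\frac2{\sqrt3}$ does more than keep $-\delta\ne-\frac2{\sqrt3}$, as you suggest: it is what makes the gluing of the two halves possible while keeping the passage over the cylinder away from $\SS_2^0$, hence keeping touchdown a single point rather than an interval. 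This is what the paper means by "the existence of a connecting orbit on the blow-up cylinder \ldots follows automatically," and your proof should construct it.
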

\begin{proof}
In the original model, \Cref{eq-3}, the ``touchdown'' solution of type II satisfies 
$w=\mp1$ at $x=\mp1$; cf.~\cref{def:soltyp}. After the $w$-rescaling in~\cref{eq-6}, these boundary 
conditions are equivalent to $\tilde w=\mp\delta$ at $\xi=\mp1$. The dynamics close to the 
boundary is naturally studied in chart $K_1$, which implies that $w_1=\mp\delta$ must hold 
at $\xi_1=\mp1$; cf.~\cref{eq-9a}. For $\frac1{\sqrt{\lambda_2}} \leq\delta\leq\delta_1$, and in 
contrast to the solutions of type I considered in \cref{ssec:reg1}, the
corresponding orbits can be fully studied in chart $K_1$, as they stay away from the critical manifold $\SS_2^0$ in $K_2$; recall~\Cref{eq-24}. The existence of a connecting orbit on
the blow-up cylinder between $w_1=-\delta$ and $w_1=\delta$ then follows automatically; see
the upper panel of \cref{fig:3lam}.
\end{proof}

\begin{remark}
For $\delta=0$, the type II-solution constructed in \cref{lem-II} collapses onto the line 
$\{w_1=0\}$. That case, which requires further consideration, is studied in region $\mathcal{R}_3$; 
cf.~\cref{ssec:reg3}. In fact, and as mentioned previously, both $\mathcal{R}_2$ and 
$\mathcal{R}_3$ are required to cover the green curve in \cref{fig:bdblup}.
\end{remark}

\cref{lem-II} guarantees the existence of a type II-solution for every 
$\delta\in\big[ \frac1{\sqrt{\lambda_2}},\delta_1\big]$, with $\delta_1<\delta_\ast$.
For the same range of $\delta$, {\it i.e.}, in the overlap between regions $\mathcal{R}_1$ and 
$\mathcal{R}_2$, \cref{prop-1} implies the local existence of type I-solutions. Hence, we can conclude that the boundary value problem \{\cref{eq-7},\cref{eq-5b}\} admits a pair of singular solutions for $\delta<\delta_\ast$;
one of these is of type I, while the other is of type II.  At $\delta=\delta_\ast$, the two singular 
solutions coalesce in a type II-solution. Finally, for $\delta>\delta_\ast$, no singular solution exists. 
The resulting three scenarios are illustrated in \cref{fig:3lam}. In particular, we note that solutions of 
\mbox{type I} satisfy $w_1=\mp\frac{2}{\sqrt{3}}$ -- or, equivalently,  $w=\mp\frac{2}{\sqrt{3}\delta}$ in 
the original formulation -- for $\xi_1=\mp 1$, while those of type II are
characterized by $w_1=\mp\delta$ at $\xi_1=\mp 1$, as proven in \cref{lem-II}; see again \cref{fig:3lam}.

\begin{figure}%[H]
\centering
\subfigure[]{
\includegraphics[scale=0.7]{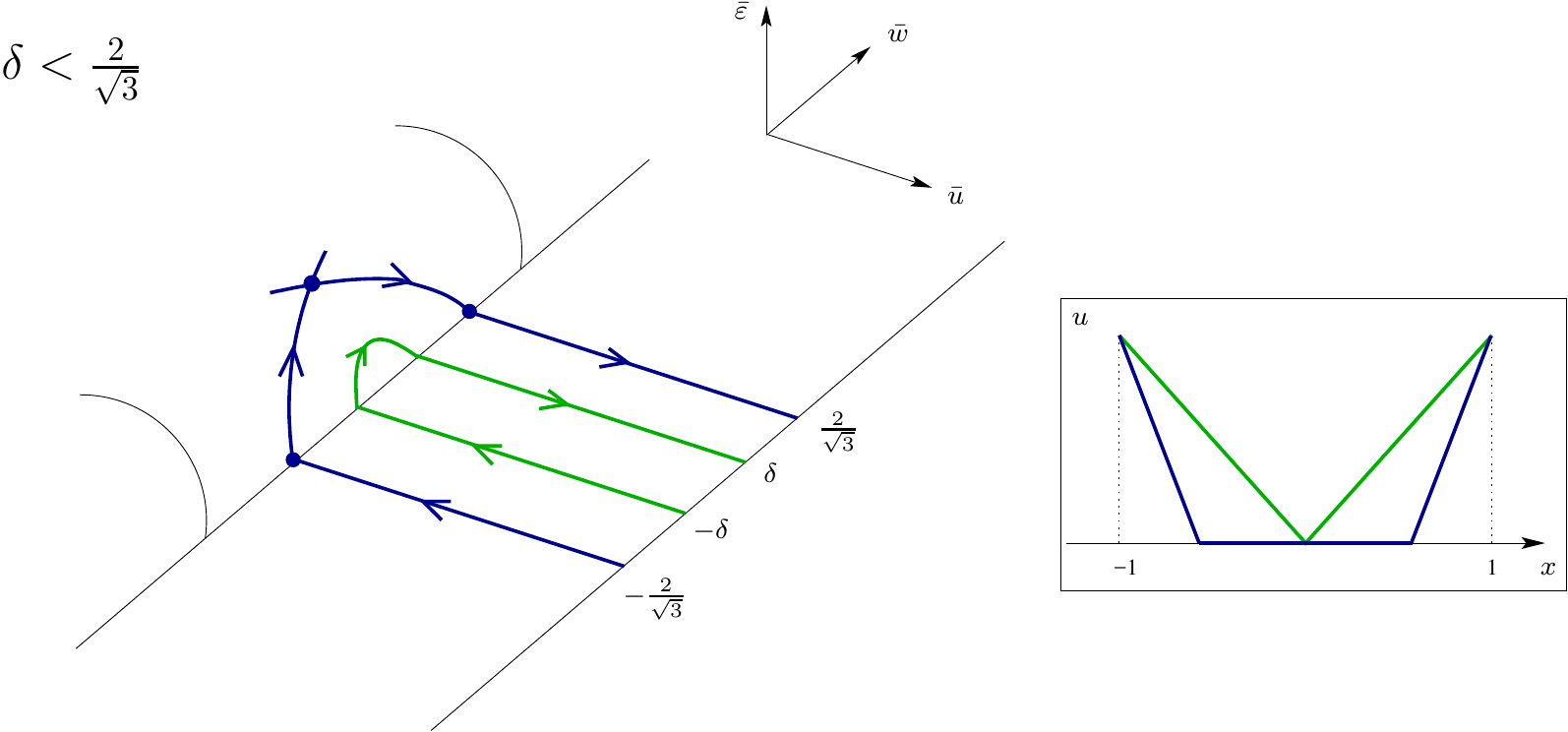}
} \\
\subfigure[]{
\includegraphics[scale=0.7]{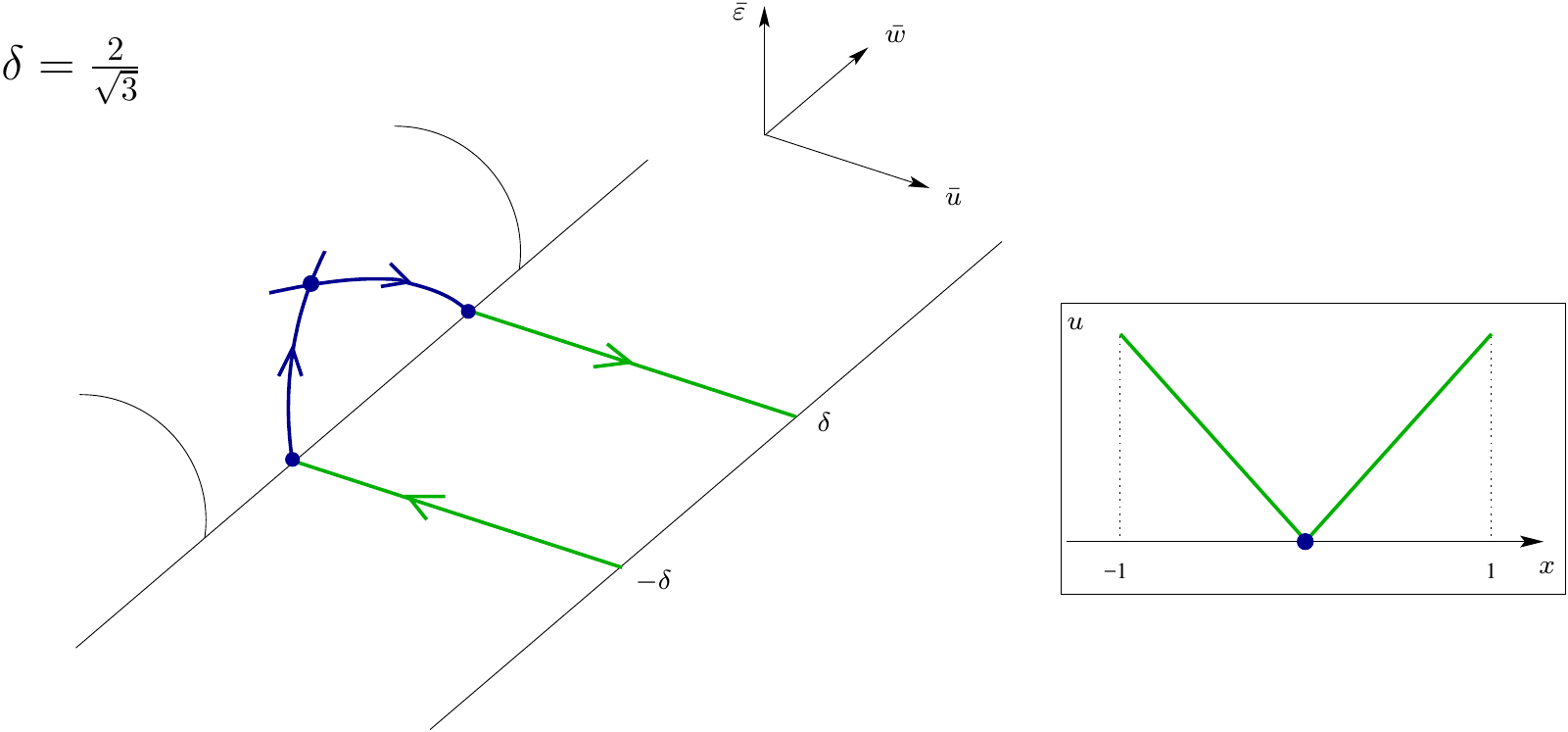}
} \\
\subfigure[]{
\includegraphics[scale=0.7]{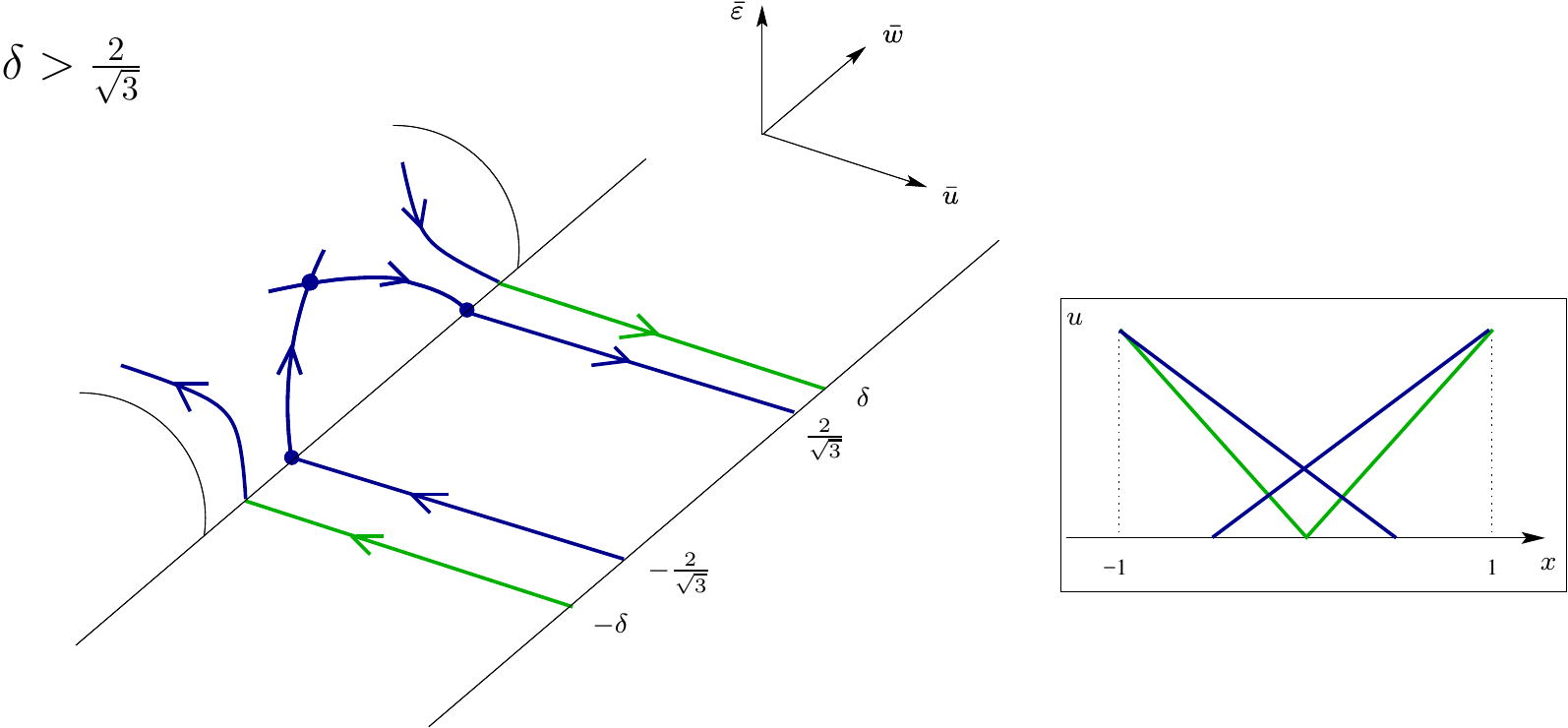}
}
\caption{Saddle-node bifurcation in the singular limit of $\eps=0$ in \Cref{eq-7} upon variation of $\delta$. In the respective insets, the corresponding singular solutions of types I (blue) and II (green) are shown. In particular, for $\delta>\frac{2}{\sqrt{3}}$, no solutions of types I or II exist.}
\label{fig:3lam}      
\end{figure}

The main result of this section is the following
\begin{proposition}\label{prop-2}
There exists $\eps_0>0$ sufficiently small such that in region $\mathcal{R}_2$, the boundary value problem \{\cref{eq-7},\cref{eq-5b}\} admits a unique branch of solutions for $\eps\in(0,\eps_0)$. 
That branch consists of two sub-branches which limit on singular solutions of types I and II, 
respectively, as $\eps\to0$.

The two sub-branches meet in a saddle-node bifurcation point at $\lambda_\ast(\eps)$, 
where two solutions exist for $\lambda>\lambda_\ast$ and 
$|\lambda-\lambda_\ast|$ small, whereas no solution exists for $\lambda < \lambda_\ast$.
Moreover, for $\eps\in(0,\eps_0)$, $\lambda_\ast$ has the asymptotic expansion
\begin{align}\label{eq-42}
\lambda_{\ast}(\eps)=\frac34\eps-\bigg(\sqrt{\frac32}+\frac98\bigg)\eps^2\ln\eps+
\OO(\eps^2).
\end{align}
The transition between regions $\mathcal{R}_2$ and $\mathcal{R}_1$
occurs as the branch of solutions limiting on solutions of type I connects to the branch already constructed in \cref{prop-1}.
\end{proposition}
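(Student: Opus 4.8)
The plan is to establish the saddle-node bifurcation at $\delta_\ast=\frac2{\sqrt3}$ by a local analysis in chart $K_2$, where the two singular solutions (of types I and II) coalesce, and then to extract the asymptotic expansion for $\lambda_\ast(\eps)$ from the detailed geometry developed in Section~\ref{ssec:reg1}. The key observation driving the argument is \cref{rem:lam34}: for $\delta<\delta_\ast$ the two landing points $Q_2^\mp=\big(1,0,\pm\frac{\sqrt3}2\delta\mp1,0\big)$ on the critical manifold $\SS_2^0$ are distinct and ordered so that $\frac{\sqrt3}2\delta-1<-\frac{\sqrt3}2\delta+1$, which is precisely the condition allowing the slow drift along $\SS_2^0$ (and hence the Exchange Lemma) to realize a type~I connection, while \cref{lem-II} supplies a type~II solution landing at $w_1=\mp\delta$; at $\delta=\delta_\ast$ these two points collide, $Q_2^-=Q_2^+$, and for $\delta>\delta_\ast$ the drift reverses direction so that no connecting orbit of either type survives. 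First I would make this collision quantitative: using the $w$-rescaling \cref{eq-6}, the type~I solution enters the blow-up with $w_1=\mp\frac2{\sqrt3}$ at $\xi_1=\mp1$, whereas the type~II solution enters with $w_1=\mp\delta$; the two families are parametrized by the initial $w$-value in $\VV_{1_\eps}^\mp$, and the defining equation for a genuine solution is the matching condition $\xi(w_0,\eps,\delta)=0$ (with $w(w_0,\eps,\delta)=0$) from the shooting strategy of \Cref{sec:dynfor}.

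The heart of the proof is to write this matching condition as a scalar equation $G(\delta,\eps)=0$ and to show that, as a function of $\delta$ for $\eps$ fixed and small, $G$ has a nondegenerate quadratic fold. Concretely, I would track the value $\xi_1^{\rm out}$ at which the forward orbit from $\VV_{1_\eps}^-$ crosses $\Sigma_1^{\rm out}$, for which \cref{lem:xi1out} already provides the refined expansion
\begin{align*}
 \xi_1^{\rm out}=-1+\frac{\sqrt3}2\delta-\frac{3\sqrt3}8\delta\eps\ln\eps+\OO(\delta\eps);
\end{align*}
combining the forward image from $\VV_{1_\eps}^-$ with the backward image from $\VV_{1_\eps}^+$ and requiring their intersection at $\{w=0\}$ to occur at $\xi=0$ yields $G$. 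By the symmetry of the problem, the condition reduces to the requirement that the total drift in $\xi_2$ accumulated along $\SS_2^{r_2}$ between the two landing points vanishes at the correct parameter value. Since the reduced drift on $\SS_2^0$ has speed $\delta$ and the separation of the landing points is $\big(-\frac{\sqrt3}2\delta+1\big)-\big(\frac{\sqrt3}2\delta-1\big)=2-\sqrt3\,\delta$, the leading balance forces $\delta=\delta_\ast=\frac2{\sqrt3}$, i.e. $\lambda_\ast=\frac34\eps$ to leading order; the $\eps\ln\eps$ correction in $\xi_1^{\rm out}$ then feeds directly into the next-order coefficient. I would verify the fold conditions $\partial_\delta G(\delta_\ast,\eps)=0$ and $\partial_\delta^2 G(\delta_\ast,\eps)\neq0$, which upgrade the transversality lost in region $\mathcal{R}_1$ (recall \cref{rem:lam34}) to a nondegenerate tangency, and then apply the standard saddle-node normal-form / implicit-function argument to conclude that two solution branches exist for $\lambda>\lambda_\ast$ and none for $\lambda<\lambda_\ast$, with $\lambda_\ast(\eps)$ determined implicitly.

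To obtain the coefficients in \cref{eq-42}, I would expand $G(\delta,\eps)=0$ systematically in $\eps$ (and $\eps\ln\eps$), substituting the expansion of $\xi_1^{\rm out}$ from \cref{lem:xi1out} together with the $u_2\to\infty$ asymptotics $w_2^\mp(\infty)=\mp\frac2{\sqrt3}$ from \cref{eq-27} that governs the passage through chart $K_2$. The logarithmic term $-\big(\sqrt{\tfrac32}+\tfrac98\big)\eps^2\ln\eps$ arises from the resonant $\frac{3\sqrt3}8\delta\eps$ monomial identified in the normal form \cref{eq:K1sysf}, converted from $\delta$ to $\lambda$ via $\delta=\sqrt{\eps/\lambda}$: the factor $\delta$ carries a $\sqrt\eps$, so that a $\delta\eps\ln\eps$ contribution to $\xi_1^{\rm out}$ becomes an $\eps^2\ln\eps$ contribution to $\lambda_\ast$ after inverting the fold relation. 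The regularity claims — that $\lambda_\ast$ is $C^1$ in $\eps$ and smooth as a function of $(\eps,\ln\eps)$ — follow because the blown-up vector field \cref{eq-11} depends smoothly on its arguments and the only non-smoothness enters through the resonant logarithm, which appears polynomially in $\ln\eps$.

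The main obstacle I anticipate is the bookkeeping required to push the expansion of $G$ to order $\eps^2\ln\eps$ with explicit constants, since this demands controlling not only the resonant contribution from chart $K_1$ but also the next-order correction to the slow drift accumulated in chart $K_2$, where the Exchange-Lemma error estimates must be sharp enough to isolate the $\eps^2\ln\eps$ coefficient $\sqrt{\tfrac32}+\tfrac98$. Verifying that no additional $\eps^2\ln\eps$ terms are generated by the change of variables $\delta\mapsto\lambda$ or by the symmetric backward passage — and that the two contributions combine to exactly $\sqrt{\tfrac32}+\tfrac98$ rather than to some other constant — is the delicate part; the nondegeneracy of the fold, by contrast, should follow cleanly once $G$ is in hand.
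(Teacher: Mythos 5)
Your overall strategy---symmetric shooting, a bifurcation equation near $\delta_\ast=\frac2{\sqrt3}$, a fold extracted from that equation, and the chart-$K_1$ resonance feeding the expansion---is broadly the same as the paper's, and your leading-order mechanism (collision of $Q_2^-$ and $Q_2^+$, hence $\lambda_\ast=\frac34\eps$ to leading order) is correct. However, there is a genuine gap in how you produce both the fold and the coefficient in \cref{eq-42}. You attribute the whole term $-\big(\sqrt{3/2}+\frac98\big)\eps^2\ln\eps$ to the resonant monomial $\frac{3\sqrt3}8\delta\eps$ in chart $K_1$; that resonance accounts only for the $\frac98$. The $\sqrt{3/2}$ comes from a second, independent source: the logarithmically divergent passage of the orbit past the saddle $(1,0)$ in chart $K_2$. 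The paper computes, via \cref{eq:csi2int} and \cref{expint}, that the drift in $\xi_2$ accumulated before reaching $\{w_2=0\}$ contains a term $-\frac{\sqrt2}2\delta\eps\ln\Delta w$, where $\Delta w$ measures the entry distance from $\WW_2^{\rm s}(Q_2)$; the resulting bifurcation equation is $\Delta\delta=-\Delta w+\frac{2\sqrt2}3\eps\ln\Delta w+\frac{\sqrt3}2\eps\ln\eps+\OO(\eps)$, cf.~\cref{bifeq}. The fold sits at the critical point $\Delta w_\ast=\frac{2\sqrt2}3\eps$, where $\ln\Delta w_\ast=\ln\eps+\OO(1)$, so the saddle-passage term contributes $\frac{2\sqrt2}3\eps\ln\eps$ to $\Delta\delta_\ast$ on top of the resonance contribution $\frac{\sqrt3}2\eps\ln\eps$; inverting $\lambda_\ast=\eps\big(\frac2{\sqrt3}+\Delta\delta_\ast\big)^{-2}$ then yields the split $\frac{3\sqrt3}4\cdot\frac{2\sqrt2}3=\sqrt{3/2}$ and $\frac{3\sqrt3}4\cdot\frac{\sqrt3}2=\frac98$. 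The omission is fatal rather than cosmetic: without the $\eps\ln\Delta w$ term the bifurcation equation is linear (monotone) in the shooting parameter and has no critical point at all, so the saddle-passage logarithm is not an ``Exchange-Lemma error estimate'' to be sharpened, as you frame it---it is the term that creates the fold. Indeed, near $\delta_\ast$ the Exchange Lemma is precisely what fails (that is the content of \cref{rem:lam34}); the paper replaces it there by the explicit integral computation in $K_2$. You do flag in your final paragraph that chart $K_2$ must contribute something, but treating that contribution as a remainder to be bounded, rather than computing its $\ln\Delta w$ singularity, means your plan cannot even establish existence of the fold, let alone its location.

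Relatedly, your functional setup cannot express the saddle-node. A scalar equation $G(\delta,\eps)=0$ with fold conditions $\partial_\delta G=0$, $\partial_\delta^2 G\neq0$ does not count solutions of the boundary value problem: for fixed $(\delta,\eps)$ the solutions are the roots in the shooting parameter $w_0$, so the bifurcation equation must retain $\Delta w$ as an unknown. In the paper's form $\Delta\delta=f(\Delta w;\eps)$, the saddle-node conditions are $\partial_{\Delta w}f(\Delta w_\ast)=0$ and $\partial^2_{\Delta w}f(\Delta w_\ast)<0$ (concavity), giving two roots $\Delta w$ for $\Delta\delta<\Delta\delta_\ast$, i.e.\ $\lambda>\lambda_\ast$, and none for $\Delta\delta>\Delta\delta_\ast$. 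Two further slips: in $\mathcal{R}_2$ one has $\lambda=\OO(\eps)$ and hence $\delta=\sqrt{\eps/\lambda}=\OO(1)$, so your claim that ``$\delta$ carries a $\sqrt\eps$'' is false---the promotion of $\eps\ln\eps$ (in $\delta_\ast$) to $\eps^2\ln\eps$ (in $\lambda_\ast$) is due to the prefactor in $\lambda_\ast=\eps/\delta_\ast^2$; and your proposal never constructs the sub-branch limiting on type II solutions for $\delta$ bounded away from $\delta_\ast$, which the paper obtains from \cref{lem-II} together with transversality at $\xi_1=0$, and which is needed both for the ``two sub-branches'' claim and for the eventual matching with region $\mathcal{R}_3$.
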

%-------------
\begin{proof}
The proof consists of two parts: we first consider a small neighborhood
of $\delta_\ast=\frac{2}{\sqrt{3}}$ -- {\it i.e.}, of $\lambda=\frac34 \eps$ -- where the saddle-node 
bifurcation occurs. We define a suitable bifurcation equation, which describes the transition from solutions which limit on type I-solutions to those which limit on solutions of type II.
Based on that equation, we infer the presence of the saddle-node bifurcation, and we calculate
the expansion for the corresponding $\lambda$-value $\lambda_\ast$. 

In a second step, we consider the branch of solutions that limit on type II-solutions for the remaining values of $\lambda$ in $\mathcal{R}_2$. Later, that branch will be shown
to connect to solutions that are covered by region $\mathcal{R}_3$.

We begin by constructing the requisite bifurcation equation for the first step in our proof. 
Since $w\approx-\frac2{\sqrt{3}}$ and $\delta\approx\frac{2}{\sqrt{3}}$, we write
\begin{align} \label{w0ll}
w_0=-\frac{2}{\sqrt{3}}+\Delta w\quad\text{and}\quad\delta=\frac{2}{\sqrt{3}}+\Delta\delta.
\end{align}
in chart $K_1$.

Applying the shooting argument outlined in \Cref{sec:dynfor}, we track the corresponding 
orbit from the initial manifold $\VV_{1_\eps}^-$ defined in \cref{eq-60} through $K_1$ and into the 
section $\Sigma_1^{\rm out}$; we denote that orbit by $\gamma_1^-$. In chart $K_2$, the point 
of intersection of the equivalent orbit $\gamma_2^-$ with the section $\Sigma_2^{\rm in}$ is then 
given by $(\sigma^{-1},w_2^{\rm in}, \xi_2^{\rm in},\eps)$,
for appropriately defined $w_2^{\rm in}$ and $\xi_2^{\rm in}$. 

\begin{figure}[!ht]
\centering
\includegraphics[scale=1]{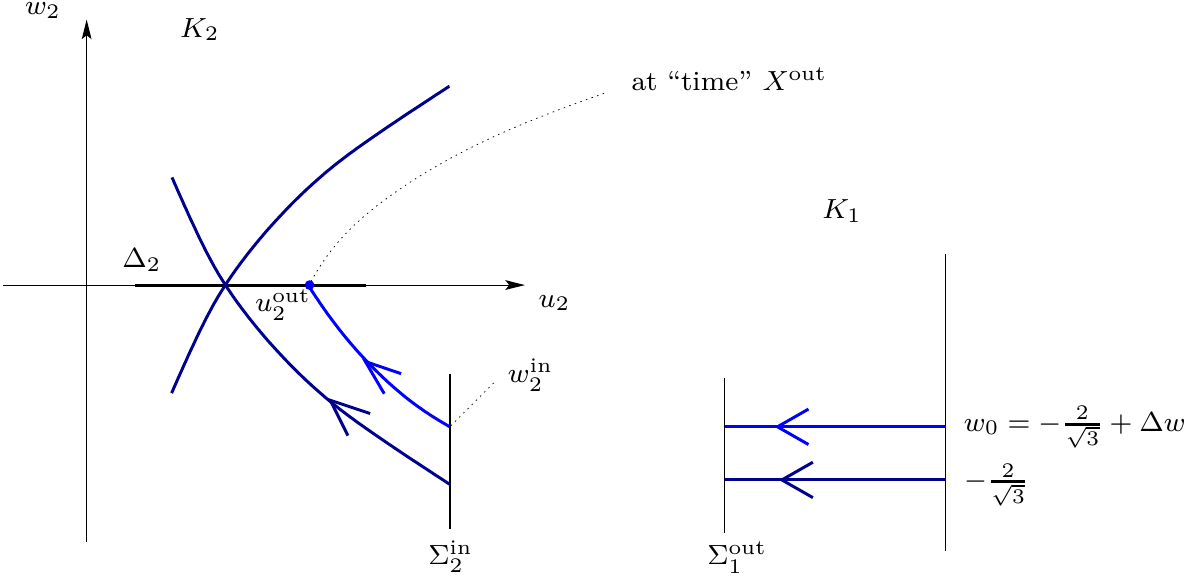}
\caption{Sketch of the shooting argument underlying the proof of \cref{prop-2}.}
\label{fig:prop3}
\end{figure}

Next, we consider the evolution of the orbit $\gamma_2^-$ through $K_2$. Let $X^{\rm out}$ denote 
the ``time'' at which $\gamma_2^-$ reaches the hyperplane $\Delta_2=\{w_2=0\}$, viz. 
$w_2(X^{\rm out})=0$. (By symmetry, it then follows that the reflection $\gamma_2^+$ of 
$\gamma_2^-$ under the map $(u_2,w_2,\xi_2,r_2)\mapsto (u_2,-w_2,-\xi_2,r_2)$ will
satisfy the boundary condition at $\VV_{1_\eps}^+$, with $w_0=\frac{2}{\sqrt{3}}-\Delta w$, after 
transformation to $K_1$.) Clearly, $X^{\rm out}$ depends on $w_2^{\rm in}$ and, in particular, on 
$\Delta w$, {\it i.e.}, on the initial deviation of the orbit from its singular limit $\Gamma_1^-$ in 
chart $K_1$.

As per our shooting argument, we need to impose the constraint that $\xi_2(X^{\rm out})=0$. Dividing 
\Cref{eq-13c} by 
\Cref{eq-13a} and recalling that $r_2=\eps$ in chart $K_2$, we find
$\frac{\mathrm{d}\xi_2}{\mathrm{d} u_2}=\frac{\delta\eps}{w_2}$ and, therefore,
\begin{align}\label{eq:csi2int}
\xi_2(u_2)=\xi_2^{\rm in}+\delta\eps\int_{u_2^{\rm in}}^{u_2^{\rm out}}\frac{1}{w_2(u_2)}\,
\mathrm{d} u_2.
\end{align}
Here, $u_2^{\rm in}=\sigma^{-1}$ as in the definition of $\Sigma_2^{\rm in}$ in \cref{eq-21}, while 
$u_2^{\rm out}$ denotes the value of $u_2$ such that $w_2(u_2^{\rm out})=0$; cf.~again 
\cref{fig:prop3}.

The sought-after bifurcation equation now corresponds to a relation between $\Delta w$, $\eps$, and 
$\delta$ that is satisfied for any solution to the boundary value problem \{\cref{eq-7},\cref{eq-5b}\} close to the 
saddle-node bifurcation in \Cref{eq-7}. To derive such a relation, we must first approximate 
$u_2^{\rm out}$: recalling the explicit expression for $w_1(\eps_1)$ on $\gamma_1^-$, as 
given in \cref{eq-26}, substituting the Ansatz made in \Cref{w0ll}, and rewriting the result in the coordinates of chart $K_2$, we find
\begin{align}\label{w2u2}
w_2(u_2) = -\sqrt{\Big(-\frac{2}{\sqrt{3}}+\Delta w\Big)^2+2\Big(\varepsilon-\frac{1}{u_2}\Big)-\frac23\Big(\varepsilon^3-\frac{1}{u_2^3}\Big)}
\end{align}
on $\gamma_2^-$. Next, we write $u_2^{\rm out}=1+\Delta u$ in \cref{w2u2}, where $\Delta u$ is 
assumed to be sufficiently small due to the fact that we stay close to the equilibrium
at $(u_2,w_2)=(1,0)$ in $K_2$. Then, we solve the resulting expression for $\Delta u$ to find three 
roots; two of these are complex conjugates, and are hence irrelevant due to the real nature of our 
problem. Expanding the third root, which is real irrespective of the value of $\Delta w$, in a series 
with respect to $\Delta w$ and $\varepsilon$, we find
\begin{align}\label{u2out}
u_2^{\rm out}=1+\frac{13\sqrt{3}}{9}\Delta w-\frac{13}{6}\varepsilon+\OO(2),
\end{align}
to first order in $\Delta w$ and $\varepsilon$.

It remains to determine the leading-order asymptotics of the integral in~\cref{eq:csi2int}. To that end, 
we expand the integrand therein as
\begin{align}\label{expintd}
\frac{1}{w_2(u_2)}=-\sqrt{\frac{3 u_2^3}{2 (u_2-1)^2 (2u_2+1)}}+\OO(\Delta w,\eps),
\end{align}
which can be shown to be sufficient to the order of accuracy considered here. (The inclusion of 
higher-order terms in \cref{eq:csi2int} would yield a refined bifurcation equation, and would 
hence allow us to take the expansion for $\lambda_\ast$ in \cref{eq-42} to higher order in $\eps$.)

Combining~\cref{expintd} and~\cref{u2out} and noting that $u_2^{\rm in}$ only enters through 
higher-order terms in $\Delta w$, which are neglected here, we finally obtain the expansion
\begin{align}\label{expint}
\int_{u_2^{\rm in}}^{u_2^{\rm out}} \frac{1}{w_2(u_2)}\,\mathrm{d}u_2=-\frac{\sqrt{2}}{2}\ln\Delta w+C
+\mathcal{O}(\Delta w) %-\frac{91 \sqrt{6}}{108}\Delta w,
\end{align}
where $C$ is a computable constant.
(The above expansion reflects the fact that, as $\Delta w\to 0$, {\it i.e.}, as the point 
$(\sigma^{-1},w_2^{\rm in},\xi_2^{\rm in},\eps)$ tends to the stable manifold $\WW_2^{\rm s}(Q_2)$,
the ``time'' required for reaching $\Delta_2$ tends to infinity. Moreover, it is consistent 
with the observation 
that expansions of solutions passing close to equilibria or slow manifolds of saddle type frequently 
involve logarithmic terms.)

Next, we substitute $\xi_2^{\rm in}(=\xi_1^{\rm out})=-1-\frac{\delta}{w_0}+
\frac{\delta}{w_0^3}\eps\ln\eps+\OO(\eps)$ from~\cref{eq-112} into \cref{expint} to obtain
\begin{align} \label{csi2}
\xi_2(u_2^{\rm out})=-1-\frac{\delta}{w_0}-\frac{\sqrt{2}}{2}\delta\eps\ln\Delta w+\frac{\delta}{w_0^3}\eps\ln\eps+\OO(\eps)\stackrel{!}{=}0.
\end{align}
Shifting $w_0$ and $\delta$ by $\Delta w$ and $\Delta\delta$, cf.~\cref{w0ll}, and solving 
\cref{csi2} for $\Delta \delta$, we obtain the following bifurcation equation in 
$(\Delta w, \Delta\delta,\eps)$:
\begin{align} \label{bifeq}
\Delta\delta=-\Delta w+\frac{2\sqrt{2}}{3}\eps\ln\Delta w+\frac{\sqrt{3}}{2}\eps\ln\eps+\OO(\eps).
\end{align}

The last step consists in finding the $\Delta w$-value $\Delta w_\ast$ at which the bifurcation equation 
in~\cref{bifeq} attains its minimum, corresponding to the approximate location of the saddle-node 
bifurcation in \Cref{eq-7}, and in reverting to the original scalings. To that aim, we differentiate 
\Cref{bifeq} and solve $\frac{\mathrm{d}\Delta\delta}{\mathrm{d}\Delta w}=0$ to leading order, 
which yields $\Delta w_\ast=\frac{2\sqrt{2}}{3}\varepsilon$; see~\cref{fig:bdsn}.

Substituting into~\cref{bifeq}, we obtain the corresponding value of 
$\Delta\delta_\ast$, which implies 
$\lambda_\ast=\frac{\eps}{\delta_\ast^2}=\eps\big(\frac{2}{\sqrt{3}}+\Delta\delta_\ast\big)^{-2}$
by \Cref{w0ll}. Hence, we find the desired asymptotic expansion for $\lambda_\ast$, viz.
\begin{align} \label{snexp}
\lambda_{\ast}(\eps)=\frac34\eps-\bigg(\sqrt{\frac32}+\frac98\bigg)\eps^2\ln\eps+\OO(\eps^2),
\end{align}
as claimed. Finally, since 
\begin{align*}
\frac{\mathrm{d}^2\Delta\delta}{\mathrm{d}(\Delta w)^2}\bigg|_{\Delta w=\Delta w_\ast}=
-\frac{2 \sqrt{2}}{3} \frac{\eps}{(\Delta w_\ast)^2}
\end{align*}
is negative, the function $\Delta\delta(\Delta w)$ is locally concave, which implies that the unfolding of 
solutions to \Cref{eq-7} for $|\lambda-\lambda_\ast|$ small is as given in the statement of 
the proposition; see~\cref{fig:bdsna}.
In particular, the branch of solutions which limits on solutions of type I overlaps with the one
contained in region $\mathcal{R}_1$, as $\delta_1$ can be chosen arbitrarily close to
$\frac{2}{\sqrt{3}}$ in the statement of \cref{prop-1}.

The last part of the proof concerns the existence of solutions to the boundary value problem \{\cref{eq-7},\cref{eq-5b}\}
which limit on type II-solutions as $\eps\to 0$ for the remaining values of $\lambda$ in 
$\mathcal{R}_2$, {\it i.e.}, for $\delta\in\big(\frac1{\sqrt{\lambda_2}},\delta_1\big)$. The existence of 
singular solutions of type II in that range is ensured by \cref{lem-II}.
In the singular limit, {\it i.e.}, for $\eps=0$, we have transversality at $\xi_1=0$ with respect to 
variation of $w_1$ at $\xi_1=\mp 1$ around $\mp\delta$. Hence, the corresponding singular solutions 
perturb to solutions of the boundary value problem \{\cref{eq-7},\cref{eq-5b}\} for $0<\eps \ll 1$, which completes the proof.
\end{proof}
\begin{remark}
The branch of solutions derived in the last part of the proof is still described by the bifurcation equation in
\cref{bifeq}, the difference being that the $\eps \ln\Delta w$-term
is now regular, \emph{i.e.}, $\OO(\eps)$, due to $\Delta w=\OO(1)$. The above proof also implies that $\Delta\delta$ must be larger than $\OO(\eps)$; in fact,
Lindsay's work \cite{Li14} shows that $\Delta\delta=\OO(\eps\ln\eps)$.
\end{remark}
\begin{remark}
The presence of an $\eps \ln \eps$-term in the bifurcation equation
\cref{bifeq} implies that the convergence to the singular limit of $\eps=0$ cannot be 
smooth in $\eps$; rather, it will be regular in $(\eps, \ln \eps)$. A similar situation was
encountered in \cref{prop-1} above, where the presence of logarithmic switchback terms 
in $\eps$ was observed; recall \cref{sec:logsw}. Here, we emphasize that the source of these terms in \cref{bifeq} is two-fold: in addition to switchback due to a resonance in chart $K_1$, logarithmic terms are also introduced through the passage of the flow past the saddle point at $(1,0)$ in $K_2$, as is evident from the $\eps\ln\Delta w$-term in \Cref{expint}. In particular, both contributions manifest in the expansion for $\lambda_\ast(\eps)$ in \Cref{snexp}.
\end{remark}

\begin{figure}[H]
\centering
\subfigure[Local view.]{      
\includegraphics[scale=0.8]{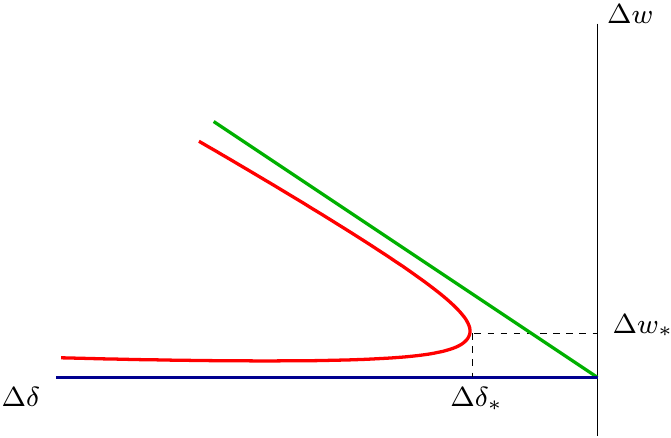} \label{fig:bdsna}   
}
\subfigure[Global view.]{ 
\includegraphics[scale=0.7]{sn4b2.pdf} \label{fig:bdsnb}   
}
\caption{Illustration of the saddle-node bifurcation at $\lambda_\ast$ in \Cref{eq-7}. The red curve corresponds to the 
case of $\eps\neq 0$, while the singular limit of $\eps=0$ is represented in blue (type I),
green (type II), and black (type III). The point of intersection of the green and blue curves 
corresponds to the critical $\delta$-value $\delta_\ast$; a small neighborhood of that point where 
the transition between these two curves occurs is considered in the first part of \cref{prop-2}, while the 
remainder of the green curve -- up to an arbitrarily small, but fixed distance from the 
intersection with the black curve -- is studied in the second part of \cref{prop-2}. Finally, the transition 
between the green and black curves is described in \cref{ssec:reg3} below.}
\label{fig:bdsn}      
\end{figure}

The asymptotic expansion for $\lambda_\ast$ in~\cref{eq-42} shows excellent agreement with 
numerical values that were obtained using the continuation software 
package~\texttt{AUTO}~\cite{AU}; see \cref{fig:errcom}. In particular, the distance between 
the two curves is  $\OO(\eps^2)$, {\it i.e.}, of higher order in $\eps$, as postulated. 

\begin{figure}[!h]
\centering
\includegraphics[scale=1.0]{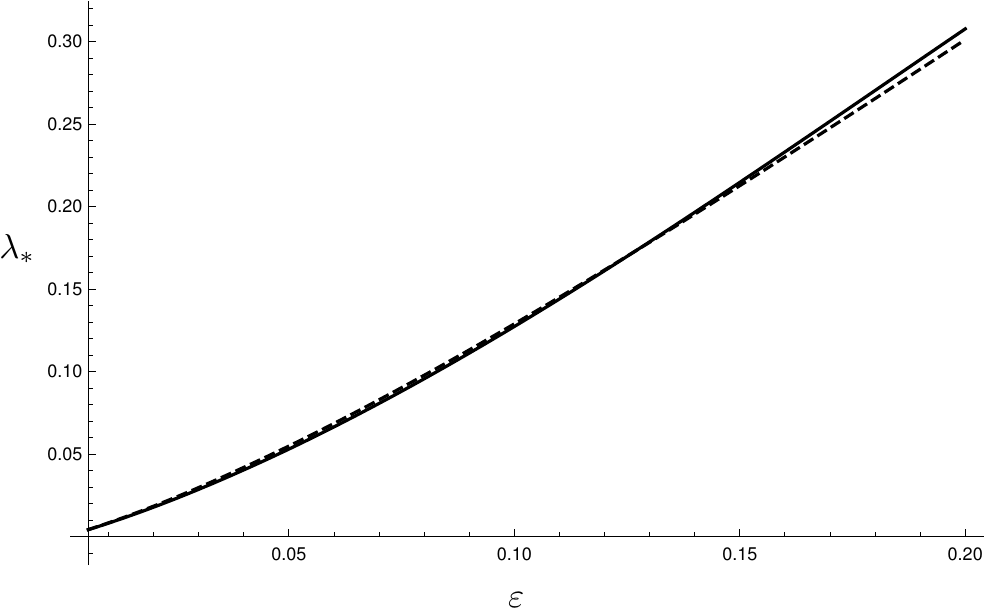}
\caption{Comparison between the asymptotic expansion for $\lambda_\ast(\eps)$ in~\cref{eq-42} 
(solid curve) and numerical values obtained with~\texttt{AUTO} (dashed curve).}
\label{fig:errcom}
\end{figure}

%----------------------------------------------------------------------------
\subsection{Region $\mathcal{R}_3$} \label[subsection]{ssec:reg3}
%----------------------------------------------------------------------------

It remains to analyse region $\mathcal{R}_3$, which contains the branch of solutions in the bifurcation
diagram that perturb from type III-solutions, corresponding to the non-regularized problem
\begin{align}\label{eq:nrm}
u''=\frac{\lambda}{u^2},\qquad\text{ for }x\in[-1,1],\text{ with }u=1\text{ when }x=\mp1.
\end{align}
By \cref{def:soltyp}, solutions of type III differ from those of types I and II, in that they do not exhibit 
touchdown phenomena. Regularization affects them only weakly, {\it i.e.}, in a regular fashion, with
the effect becoming slightly more pronounced as $\lambda \to 0$; cf.~\cref{fig:segm3}. 
Thus, most of the solutions contained in region $\mathcal{R}_3$ perturb from 
$\mathcal{B}_3$ in a regular way, and are hence easy to obtain.
The limit of $\lambda \to 0$, {\it i.e.}, the transition from $\mathcal{R}_3$ to $\mathcal{R}_2$, 
needs to be treated more carefully.
\begin{remark}
It is easy to see that \Cref{eq:nrm} -- or, rather, the corresponding first-order system -- is
Hamiltonian; the level curves of the associated Hamiltonian are given precisely 
by the singular solutions in panel (b) of \cref{fig:segm3}.
\end{remark}

\begin{figure}[!h]
\centering
\includegraphics[scale=0.815]{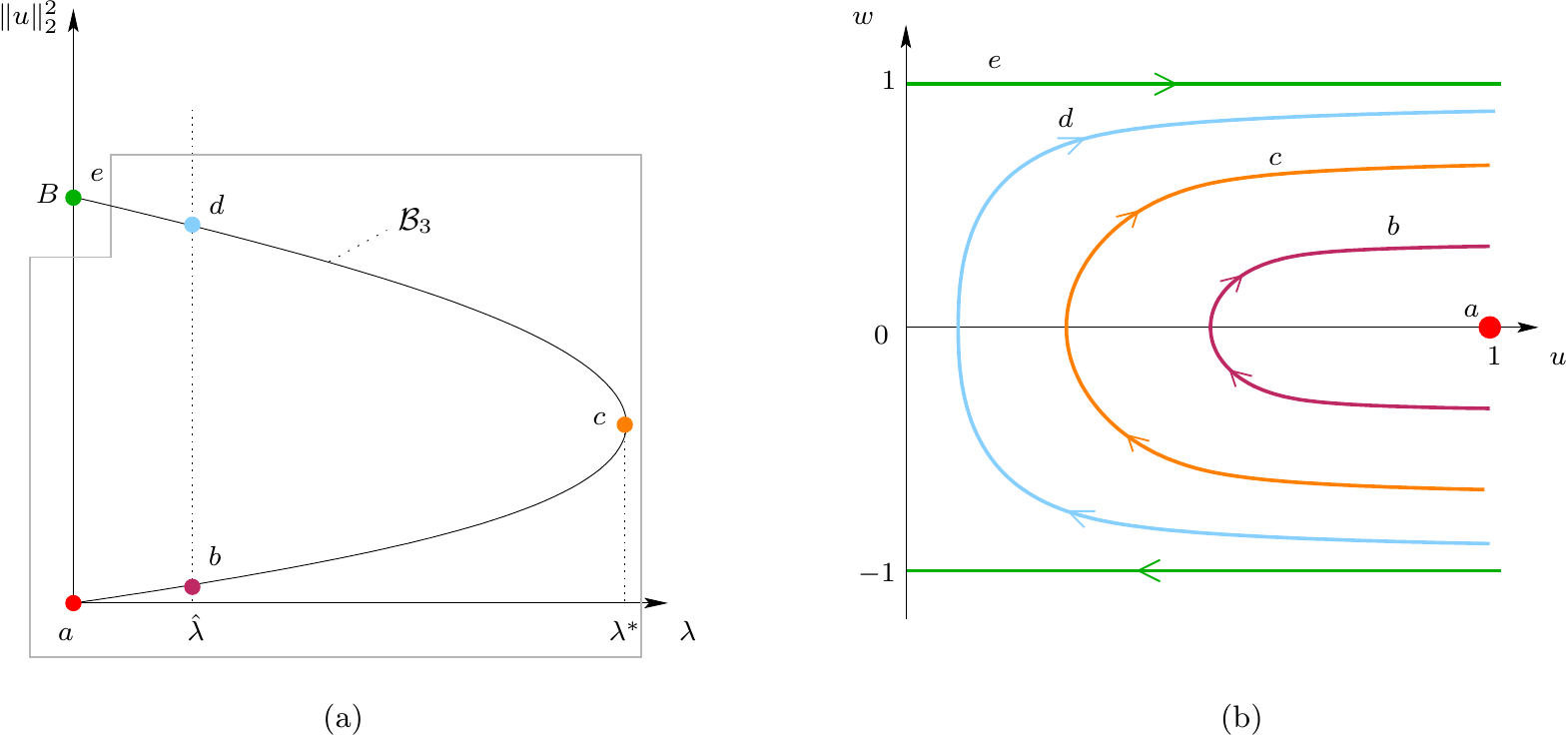}
\caption{(a) Covering of the curve $\mathcal{B}_3$ by region $\mathcal{R}_3$ for $\eps > 0$;
(b) the corresponding singular solutions in the original $(u,w)$-space. The green solution is of 
type II, and corresponds to the limit as $\lambda \to 0$. Recall that, for $\eps \to 0$, $\mathcal{R}_3$ 
approaches the line $\{\lambda=0\}$ near the point $B$. 
Singular solutions corresponding to $\lambda>0$ are of type III and do not exhibit touchdown 
phenomena. The orange solution realized at the fold point $\lambda=\lambda^\ast$ is the one 
where the two parts of the curve $\mathcal{B}_3$ meet.}
\label{fig:segm3}
\end{figure}

Type III-solutions are contained in the curve $\mathcal{B}_3$ in the limit of $\delta=0$; see 
\cref{fig:deleps}. That limit was not covered in region $\mathcal{R}_2$, as the approach used 
there required the assumption that $\delta\geq \frac1{\sqrt{\lambda_2}}$. The limit as $\delta\to 0$, 
however, results in singular dynamics in chart $K_1$, as the type II-solution (green)
-- corresponding to $w_1=\mp \delta$ at $\xi_1=\mp1$ -- collapses onto the line
\begin{align}\label{eq:M10}
\MM_1^0:=\set{(r_1,0,\xi_1,0)}{r_1\in\mathbb{R}^+,\ \xi_1\in\mathbb{R}};
\end{align}
see \cref{fig:SysNTT:b} and the upper panel of \cref{fig:3lam}. Clearly, $\MM_1^0$ constitutes a 
line of non-hyperbolic equilibria for \Cref{eq-11} which corresponds to the manifold 
$\MM^0$ in~\cref{M0}, after blow-down. The singular nature of $\MM_1^0$ is related 
to the rescaling of $w$ introduced in~\cref{eq-6}. That rescaling, which corresponded to a ``zooming 
out'', turned out to be particularly useful for our analysis in regions $\mathcal{R}_1$ and 
$\mathcal{R}_2$. However, it cannot provide a good description of region $\mathcal{R}_3$. To 
study the dynamics in $\mathcal{R}_3$, we would have to perform another blow-up involving 
$\delta$, $w_1$, and $\eps_1$ in chart $K_1$ in order to basically undo the $w$-rescaling 
in~\cref{eq-6}. 
It is much simpler to consider the $\delta$-range covered by $\mathcal{R}_3$ by returning to the 
original system without any rescaling of $w$; cf.~\Cref{eq-4}.

The main result of this section is the following
\begin{proposition}\label{prop-3}
There exists $\eps_0>0$ sufficiently small such that in region $\mathcal{R}_3$,
the boundary value problem \{\cref{eq-4},\cref{eq-5}\} admits a unique branch of solutions for $\eps\in(0,\eps_0)$. 
Outside of a fixed neighborhood of the point $B$, that branch converges smoothly
as $\eps\to 0$ to the curve $\mathcal{B}_3$ along which solutions of the 
non-regularized boundary value problem, \Cref{eq:nrm}, exist. In the $\eps$-dependent region overlapping with $\mathcal{R}_2$, the branch of solutions limiting on solutions of type II described in \cref{prop-2} is recovered. There, the transition
from solutions that limit on type-III solutions to those limiting on singular solutions of type II occurs.
\end{proposition}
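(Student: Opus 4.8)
The plan is to return to the original, unscaled system \cref{eq-4}, as suggested above, and to construct the branch in $\mathcal{R}_3$ by the shooting argument of \Cref{sec:dynfor}. Exploiting \cref{rem:even}, I would work on $\xi\in[-1,0]$ with $u(-1)=1$ and impose $w(0)=0$, tracking the orbit issued at $\xi=-1$ forward to the hyperplane $\{w=0\}$ and requiring $\xi=0$ there. The crucial structural fact is that the planar $(u,w)$-subsystem of \cref{eq-4} is integrable for every fixed $\eps\ge 0$, with first integral $H_\eps(u,w)=\tfrac12 w^2+\tfrac{\lambda}{u}-\tfrac{\lambda\eps^2}{3u^3}$; for $\eps=0$ the level sets of $H_0$ are precisely the type III singular solutions, cf.~the remark following \cref{eq:nrm}. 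This reduces the shooting condition to the explicit ``period'' integral
\begin{align*}
\int_{u_{\min}}^{1}\frac{{\rm d}u}{\sqrt{2\big(H_\eps-\lambda/u+\lambda\eps^2/(3u^3)\big)}}=1,
\end{align*}
where $u_{\min}$ is the turning point fixed by $w(u_{\min})=0$ and $\lambda$ is the bifurcation parameter.

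First I would treat the regular regime, i.e.~$\lambda$ in a fixed compact subinterval bounded away from $0$, corresponding to type III solutions whose turning point $u_{\min}$ stays bounded away from the touchdown set $\SS^0=\{u=0\}$. There the regularizing term $\lambda\eps^2/u^4$ in \cref{eq-4b}, and hence the integrand above, is a smooth, uniformly $\OO(\eps^2)$ perturbation of its $\eps=0$ counterpart. The forward orbit meets $\{w=0\}$ transversally at $\xi=0$ for the non-regularized branch $\mathcal{B}_3$ -- including through the fold, which is a feature of the projection to $\lambda$ rather than a loss of transversality -- so the implicit function theorem yields a unique branch for $\eps\in(0,\eps_0)$ that depends smoothly on $\eps^2$ and converges to $\mathcal{B}_3$ as $\eps\to0$. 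The explicit $\eps^2$-dependence of $H_\eps$ simultaneously furnishes the regular expansions asserted for the lower and middle branches in \cref{thm-1}(iv) and shows that $\lambda^\ast(\eps)$ perturbs smoothly, with an expansion in $\eps^2$.

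The main obstacle is the transition region, where $\lambda\to0$ and $\mathcal{R}_3$ overlaps $\mathcal{R}_2$. As $\lambda\to0$ along the upper portion of $\mathcal{B}_3$, the turning point $u_{\min}$ descends toward the regularized floor $u=\eps$, the solution approaches the type II singular solution (touchdown at $\xi=0$), and the integrand ceases to be uniformly controlled near $u=0$ once $u_{\min}=\OO(\eps)$. The regular perturbation argument breaks down precisely here. Rather than rederive the dynamics, I would show that the branch built in $\mathcal{R}_3$ connects continuously to the one already produced in \cref{prop-2}: in the overlap $\mathcal{R}_2\cap\mathcal{R}_3$ the type II-limiting sub-branch of \cref{prop-2} is available, and the task is to match it with the type III-limiting branch above. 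Concretely, I would analyse the period integral in the distinguished limit $u_{\min}=\OO(\eps)$, splitting it into an inner contribution near $u=0$ -- controlled by the saddle dynamics of \cref{xy} in chart $K_2$, exactly as in \cref{lem-II} and \cref{prop-2} -- and an outer contribution governed by the non-regularized flow, and then verify that the resulting matched shooting map is transverse throughout the overlap. Transversality there guarantees that the unique branch extends without bifurcation from the type III regime into the type II regime covered by \cref{prop-2}, establishing the claimed transition. The delicate step, and the reason this is the hardest part, is the uniform control of the inner integral as $u_{\min}/\eps$ approaches the type II value $1$, so that the matching with \cref{prop-2} is smooth rather than merely formal.
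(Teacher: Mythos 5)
Your route is genuinely different from the paper's, and its core reduction is sound. You exploit the exact first integral $H_\eps(u,w)=\tfrac12 w^2+\tfrac{\lambda}{u}-\tfrac{\lambda\eps^2}{3u^3}$ (which is indeed conserved by \cref{eq-4}: $\dot H_\eps=\lambda w(u^2-\eps^2)+\big({-\lambda}/{u^2}+\lambda\eps^2/u^4\big)u^4w=0$) to collapse the symmetric shooting problem to a scalar time-map equation in $(u_{\min},\lambda,\eps)$. The paper instead writes $\eps=\delta^2\lambda$, treats $\delta$ as the (regular) perturbation parameter, blows up the degenerate point $(u,\lambda)=(0,0)$ via \cref{buR3}, and performs the shooting in the chart $\kappa_1$, cf.~\cref{sysdel1w,xiout11}. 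Your treatment of the regular subregion ($\lambda$ bounded away from $0$) is complete and correct, and your handling of the fold --- parametrizing the branch by $u_{\min}$ so that $\lambda^\ast$ is a fold of the projection onto $\lambda$ rather than a failure of the shooting map --- is the exact counterpart of the paper's argument that the structurally stable saddle-node persists under regular perturbation; the observation that $H_\eps$ depends on $\eps$ only through $\eps^2$ moreover recovers the smoothness statements of \cref{thm-1}(iv) for free.

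The gap is in the transition subregion, which is the actual crux of \cref{prop-3}: there your text is a plan, not a proof. The statement that the branch ``recovers'' the type II branch of \cref{prop-2} is established in the paper by an explicit computation: expanding the shooting equation about $(w_0,\lambda,\delta)=(-1,0,0)$ yields the bifurcation equation \cref{w0A}, whose solution \cref{w0A2}, $w_0=-1+\lambda\ln\lambda+C(\delta)\lambda+\OO\big[\lambda^2(\ln\lambda)^2\big]$, is then checked to coincide with \Cref{csi2} up to higher-order terms --- that identification is what proves the two branches are one and the same. Your time map would deliver precisely this (expanding the integral as $u_{\min}\to0$ gives $1=\lvert w_0\rvert^{-1}+\OO(\lambda\ln\lambda)$, reproducing \cref{w0A2}), but you defer it to ``verify that the resulting matched shooting map is transverse,'' which is the content to be supplied. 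You also misplace where the difficulty sits: in $\mathcal{R}_3$, even in the overlap with $\mathcal{R}_2$, one has $\lambda\geq\lambda_3\eps$ by \cref{eq:R3}, hence $u_{\min}\approx2\lambda\geq2\lambda_3\eps\gg\eps$, so the turning point never approaches the regularized floor $u=\eps$; the limit $u_{\min}/\eps\to1$, where the time map diverges and the saddle dynamics of \cref{xy} genuinely enter, belongs to regions $\mathcal{R}_1$ and $\mathcal{R}_2$ (\cref{prop-1,prop-2}), not here. Consequently no inner saddle analysis is needed in $\mathcal{R}_3$: the regularizing term enters the integrand only as a uniform relative correction of size $\eps^2/u_{\min}^2=\OO(\delta^4)$, i.e., as a regular perturbation, and the singularity you must actually control is the logarithmic degeneracy of the \emph{non-regularized} time map as $\lambda,u_{\min}\to0$ --- exactly the $\lambda\ln\lambda$ switchback that the paper attributes to resonance after blow-up.
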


\begin{proof}
We recall the original first-order system, \Cref{eq-4}:
\begin{align*}
u' &=u^4w, \\
w' &=\lambda(u^2-\eps^2), \\
\xi' &=u^4, \\
\eps' &=0;
\end{align*}
given \Cref{delta}, we write $\eps=\delta^2 \lambda$ and obtain the equivalent system
\begin{subequations}\label{sysdel}
\begin{align}
u' &=u^4w, \\
w' &=\lambda(u^2-\delta^4 \lambda^2), \\
\xi' &=u^4, \\
\delta' &=0.
\end{align}
\end{subequations}
Here, the parameter $\delta$ plays the role of the small perturbation parameter, with the 
$\delta$-range corresponding to region $\mathcal{R}_3$ given by
\begin{align*}
\delta\in\Big[0,\frac1{\sqrt{\lambda_3}}\Big];
\end{align*}
cf.~\cref{eq:R3}. In summary, it is hence more convenient to consider $\lambda$ 
and $\delta$, rather than $\lambda$ and $\eps$, as the relevant parameters in this regime.

For $\delta=0$ and $\lambda>0$, the projection of the flow of \Cref{sysdel} is as illustrated
in \cref{fig:SysNTT:a}. In region $\mathcal{R}_3$, however, we are also interested in covering a 
small neighborhood of $\lambda=0$, which again gives the singular dynamics shown in 
\cref{fig:SysNTT:b}. In $(u,w)$-space, the singular solution found for $\lambda=0$ consists of 
$[0,1]\times\{-1\}$ and $[0,1]\times\{1\}$, {\it i.e.}, it approaches the degenerate line of equilibria 
for \cref{sysdel} at $\{(0,w)\}$ under the forward and backward flow in $x$, respectively; 
see \cref{fig:flg}.

\begin{figure}[H]
\centering
\includegraphics[scale=0.9]{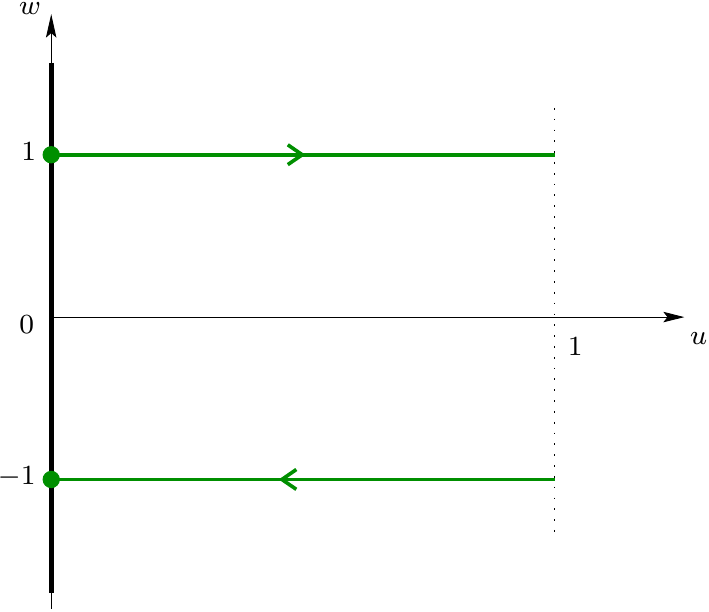}
\caption{Singular solution of \Cref{sysdel} for $(\delta,\lambda)=(0,0)$ in $(u,w)$-space. 
That solution, which is of type II, cf.~\cref{fig:soltypII}, is shown in green. The solid black 
line represents the degenerate line of equilibria at $\{(0,w)\}$.}
\label{fig:flg}
\end{figure}
To analyze the dynamics close to that line, we have to introduce a blow-up of $(u,\lambda)=(0,0)$.
As the blow-up involves $\lambda$, we append the trivial equation $\lambda'=0$ to
\cref{sysdel}:
\begin{subequations} \label{sysdele}
\begin{align}
u' &=u^4w, \\
w' &=\lambda(u^2-\delta^4 \lambda^2), \\
\xi' &=u^4, \\
\lambda' &=0, \\
\delta'&=0.
\end{align}
\end{subequations}
The requisite blow-up transformation is then given by
\begin{align}\label{buR3}
u=\bar{r}\bar{u}\qquad\text{and}\qquad\lambda=\bar{r} \bar{\lambda},
\end{align}
where $(\bar{u},\bar{\lambda}) \in S^1$, {\it i.e.}, $\bar{u}^2+\bar{\lambda}^2=1$, 
and $\bar{r} \in [0, r_0)$, with $r_0>0$. 
We denote the chart corresponding to $\bar{u}=1$ by $\kappa_1$. The analysis in that 
chart turns out to be sufficient for proving \cref{prop-3}.
In chart $\kappa_1$, the blow-up transformation in~\cref{buR3} reads
\begin{align}\label{kappa1c}
u=r_1\qquad\text{and}\qquad\lambda=r_1 \lambda_1.
\end{align}
which gives
\begin{subequations} \label{sysdel1}
\begin{align}
r_1' &=r_1 w, \\
w' &=\lambda_1(1-\delta^4 \lambda_1^2),\\
\xi' &=r_1, \\
\lambda_1' &= -\lambda_1 w, \\
\delta' &=0
\end{align}
\end{subequations}
for \Cref{sysdele}; here, $\delta$ is the small (regular) perturbation parameter. For any
$\lambda \in [0,1]$, the existence of solutions to \cref{sysdel1} can be studied via the symmetric 
shooting argument outlined in \Cref{sec:dynfor}. To that end, we define a set of initial conditions at 
$(r_1,\xi)=(1,-1)$, as follows:
\begin{align}\label{Vl}
\mathcal{V}_\lambda=\set{(1,w_0,-1,\lambda,\delta)}{w_0 \in I},
\end{align}
where $I$ is a neighborhood of $w=-1$. We remark that the initial value $\lambda$ for 
$\lambda_1$ follows from $\lambda=r_1 \lambda_1$, cf.~\cref{kappa1c}, as $r_1=1$
initially. Next, we introduce $w$ as the independent variable in \cref{sysdel1}, whence
\begin{subequations}\label{sysdel1w}
\begin{align}
\frac{{\rm d} r_1}{{\rm d} w} &=\frac{r_1 w}{\lambda_1(1-\delta^4 \lambda_1^2)}, \\
\frac{{\rm d} \xi}{{\rm d} w} &=\frac{r_1}{\lambda_1(1-\delta^4 \lambda_1^2)}, \\
\frac{{\rm d} \lambda_1}{{\rm d} w} &= -\frac{w}{1-\delta^4 \lambda_1^2}, \\
\frac{{\rm d} \delta}{{\rm d} w} &=0,
\end{align}
\end{subequations}
with initial conditions
\begin{align}\label{sysdelbc}
r_1(w_0)=1,\quad\xi(w_0)=-1,\quad\lambda_1(w_0)=\lambda,\quad\text{and}\quad\delta(w_0)=0.
\end{align}
We track $\mathcal{V}_\lambda$ under the flow of \cref{sysdel1w} up to the hyperplane $\{w=0\}$; see \cref{fig:Vl}. There, we obtain a point 
$(r_1^{\rm out},0,\xi^{\rm out},\lambda_1^{\rm out},\delta)$ in $(r_1,w,\xi,\lambda_1,\delta)$-space.
Our shooting argument implies that we have to solve the equation
\begin{align}\label{xiout11}
\xi^{\rm out}(w_0,\lambda,\delta)=0.
\end{align}
\begin{figure}[H]
\centering
\includegraphics[scale=0.7]{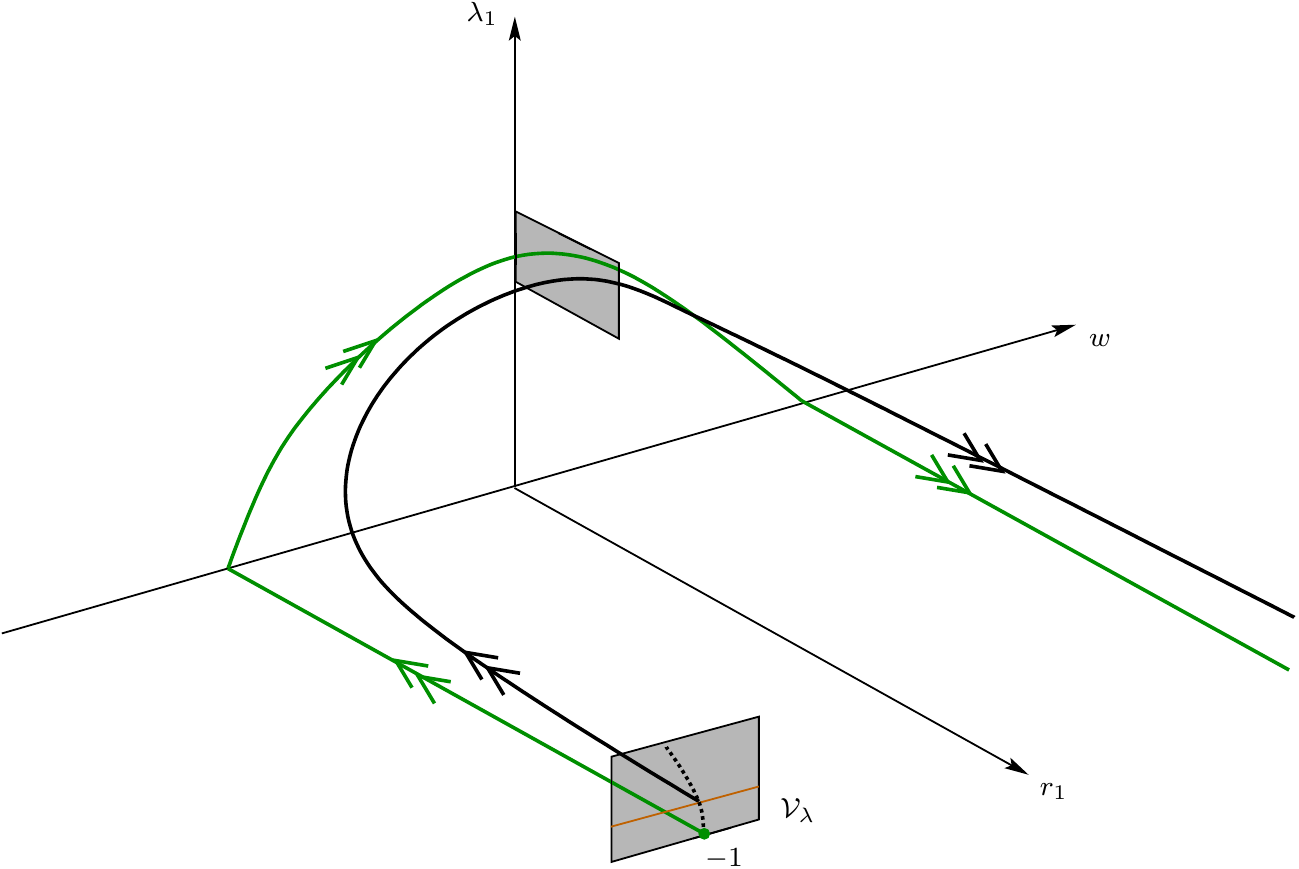}
\caption{Dynamics of \Cref{sysdel1} in $(r_1,w,\lambda_1)$-space. The gray 
section at $r_1=1$ corresponds to $\mathcal{V}_\lambda$, cf.~\cref{Vl}, which is flown forward to 
$\{w=0\}$. The green orbit represents the singular solution for $\lambda=0$, {\it i.e.}, a singular 
solution of type II, which satisfies $w=-1$ at $\xi=-1$. The black orbit corresponds to a 
solution of \cref{sysdel1} with initial conditions in $\mathcal{V}_{\lambda}$ for a fixed value of 
$\lambda >0$ and $\delta=0$, which is a solution of type III. The dashed curve contained in 
$\mathcal{V}_\lambda$ corresponds to the set $\{w_0=w_0(\lambda)\}$ that solves 
\Cref{w0A}. That set is defined by \Cref{w0A2} for $\delta=0$. 
The orange line indicates a section through $\mathcal{V}_{\lambda}$ for fixed $\lambda > 0$. 
}
\label{fig:Vl}
\end{figure}
At this point, we split $\mathcal{R}_3$ into two subregions in which we apply separate arguments
to prove the existence of a unique branch of solutions, as claimed in the statement of the proposition.
For $\lambda\geq\tilde{\lambda}$, with $\tilde{\lambda}$ fixed and positive, and $\delta=0$, 
\Cref{sysdel1w} can be solved explicitly subject to \cref{sysdelbc}; moreover, a solution 
$w_0=w_0(\lambda)$ of \Cref{xiout11} can be proven to exist for 
$\lambda\leq\lambda^\ast$. At $\lambda=\lambda^\ast$, transversality
breaks down, as \Cref{xiout11} does not admit a solution for $\lambda>\lambda^\ast$. The 
corresponding singular solutions are of type III; cf.~\cref{def:soltyp}.
Due to the regularity of \cref{xiout11} with respect to $\delta$, these 
solutions perturb in a regular fashion to solutions of \{\cref{sysdel1w},\cref{sysdelbc}\} for $\delta$ positive and small; in particular, we consider
$\delta\leq\frac1{\sqrt{\lambda_3}}$ with $\lambda_3$ large, in accordance with \cref{eq:R3}.
For $\lambda$ close to $\lambda^\ast$, individual solutions do not perturb regularly; however, the
structurally stable saddle-node bifurcation at $\lambda^\ast$ as a whole will persist as a regular 
perturbation, giving rise to a slightly perturbed value $\lambda^\ast(\delta)$ for the perturbed 
saddle-node point. Since the resulting asymptotics of $\lambda^\ast(\delta)$ is not our main 
concern, we do not consider it further here.

The second subregion of $\mathcal{R}_3$, which includes the overlap with region $\mathcal{R}_2$, 
corresponds to a small neighborhood of $(\lambda,\delta)=(0,0)$ that is given by
\begin{align} \label{p3p2}
(\lambda,\delta)\in\big[0,\tilde{\lambda}\big]\times\Big[0,\frac1{\sqrt{\lambda_3}}\Big].
\end{align}
To study the branch of solutions in this subregion, we solve \Cref{sysdel1w} with initial 
conditions as in~\cref{sysdelbc} by expanding around $(w_0,\lambda,\delta)=(-1,0,0)$, and 
by making use of the fact that the equations can be solved explicitly for $\delta=0$.
Linearizing \Cref{xiout11} around $\delta=0$, we obtain a regular perturbation problem in $\delta$ for $\xi^{\rm out}$, which gives the following expanded form of \Cref{xiout11}, up to 
higher-order terms in $(w_0,\lambda,\delta)$:
\begin{align}\label{w0A}
w_0+1-(4+3 w_0)\lambda\ln\lambda+\frac1{288}(1+w_0)\delta^8\ln\lambda=0.
\end{align}
\Cref{w0A} again contains logarithmic terms due to resonance between the 
eigenvalues $-1$, $0$ (double), and $1$ of the linearization of \Cref{sysdel1} about
the steady state at $(0,-1,-1,0)$ in chart $\kappa_1$. 
These terms arise in the passage of orbits through a neighborhood of $\{r_1=0\}$, as was observed in chart $K_1$; see \cref{sec:logsw}. Solving \Cref{w0A} for $w_0$ 
gives
\begin{align}\label{w0A2}
w_0=-1+\lambda\ln\lambda+C(\delta)\lambda+\OO\big[\lambda^2(\ln\lambda)^2\big]
\end{align}
with $C(\delta)=\OO(\delta^8)$, which is regular in $\delta$, as expected. We note that, for 
$\lambda=0$, \cref{w0A} reduces to the trivial equation $w_0+1=0$, which is solved by
$w_0=-1$, irrespective of $\delta$. The resulting singular solutions are type II-solutions, 
which are shown as the part of the green curve in the blown-up bifurcation diagram in \cref{fig:bdblup} 
that corresponds to $\bar{\eps}$ small. In line with these observations, \Cref{w0A} is 
identical to \Cref{csi2} up to terms of order $\OO(\delta^2\lambda)$ after the rescaling 
of $w$ in~\cref{eq-6}. For $\delta=0$ and $\lambda>0$, on the other hand, we match with the 
branch obtained in the part of region $\mathcal{R}_3$ that corresponds to 
$\lambda\geq\tilde{\lambda}$. 

The results obtained in the above two subregions prove the existence and 
uniqueness of a curve of solutions to the boundary value problem \{\cref{eq-7},\cref{eq-5b}\} in $\mathcal{R}_3$, as 
stated in \cref{prop-3}. It remains to consider the overlap between regions $\mathcal{R}_3$ and
$\mathcal{R}_2$: in $(\lambda,\delta)$-space, $\mathcal{R}_3$ corresponds to
\begin{align}\label{R3ps}
[0,1]\times\bigg[0,\frac1{\sqrt{\lambda_3}}\bigg]\setminus [0,\eps\lambda_3]\times 
\bigg[\frac1{\sqrt{\lambda_2}},\frac1{\sqrt{\lambda_3}}\bigg],
\end{align}
while $\mathcal{R}_2$ covers the area
\begin{align}\label{R2ps}
[0,\eps\lambda_2]\times\bigg[\frac1{\sqrt{\lambda_2}},\delta_1\bigg],
\end{align}
where $\delta_1<\frac2{\sqrt3}$ is defined as in \cref{prop-1}.
Hence, in $(\lambda,\delta)$-space, regions $\mathcal{R}_3$ and $\mathcal{R}_2$ overlap in 
the rectangle
\begin{align}\label{R2R3}
[\eps\lambda_3,\eps\lambda_2]\times\bigg[\frac1{\sqrt{\lambda_2}},\frac1{\sqrt{\lambda_3}}\bigg],
\end{align}
see \cref{fig:ld}, which is the area where the transition between the two regions occurs.
This concludes the proof of \cref{prop-3}.
\end{proof}
\begin{figure}[!h]
\centering
\includegraphics[scale=0.8]{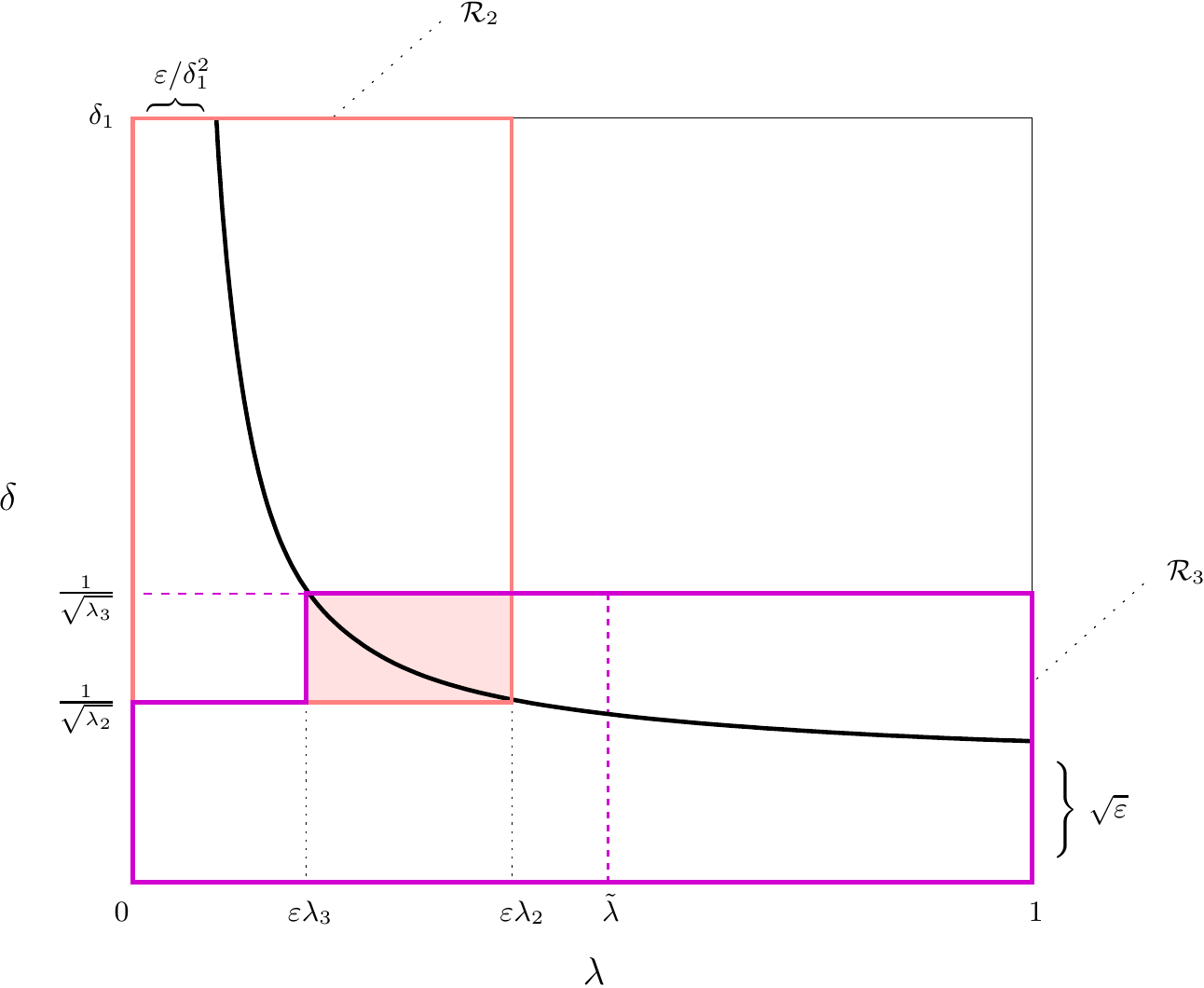}
\caption{Regions $\mathcal{R}_2$ (pink) and $\mathcal{R}_3$ (magenta) in 
$(\lambda,\delta)$-parameter space; cf.~\cref{R2ps} and~\cref{R3ps}, respectively. 
The dashed vertical magenta line at $\lambda=\tilde{\lambda}$ delimits the two subregions 
considered in the proof of \cref{prop-3}. The dashed horizontal magenta line at 
$\delta=\frac1{\sqrt{\lambda_3}}$ indicates that the argument employed in the second part of the 
proof of \cref{prop-3} is valid in the entire rectangle defined in~\cref{p3p2}, where we note that 
region $\mathcal{R}_3$ excludes the rectangle
$[0,\eps\lambda_3]\times\big[\frac1{\sqrt{\lambda_2}},\frac1{\sqrt{\lambda_3}}\big]$
by construction; cf.~\cref{eq:R3}. Regions $\mathcal{R}_2$ and $\mathcal{R}_3$ overlap in the 
shaded rectangle (light pink) given by~\cref{R2R3}. The black curve corresponds to 
$\delta^2\lambda=\eps$, for given $0<\eps\ll 1$.}
\label{fig:ld}
\end{figure}
The last step in the proof of \cref{thm-1} consists in proving \Cref{eq-normu}. 
\begin{proposition}
For $\varepsilon\in(0,\varepsilon_0)$, with $\varepsilon_0>0$ sufficiently small, the upper branch of solutions in \cref{fig:Lin:a} has the expansion stated in \Cref{eq-normu}.
\end{proposition}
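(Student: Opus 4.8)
The plan is to evaluate $\|u\|_2^2=\int_{-1}^1(u-1)^2\,d\xi$ directly along the upper branch constructed in \cref{prop-1}, where $u$ denotes the shifted variable of \cref{eq-shiftu} (so that $u-1$ is the original deflection) and $\xi$ is the original spatial variable; recall \cref{normu}. Throughout I take $\lambda$ bounded away from $0$ by a fixed positive constant, which is the meaning of ``outside a fixed neighborhood of $B$'' on this branch, so that $\delta=\sqrt{\eps/\lambda}=\OO(\sqrt\eps)$ by \cref{delta}. Since all solutions are even by \cref{rem:even}, it suffices to compute $\int_{-1}^0(u-1)^2\,d\xi$ and double the result. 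Each such solution splits into the two geometric pieces already analysed in \cref{ssec:reg1}: a transition layer near $\xi=-1$, described in chart $K_1$, on which $u$ decreases from the boundary value $u=1$ to $u\approx\eps$; and a ``flat'' portion, described by the slow drift along $\SS_2^0$ in chart $K_2$, on which $u\approx\eps$ and which reaches the symmetry point $\xi=0$. I would compute the contribution of each piece separately and add them.

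For the flat portion I would use that, along the reduced flow on $\SS_2^0$, the drift carries $\xi$ from the entry value $\xi_1^{\rm out}$ to $0$ while $u\equiv\eps u_2$ with $u_2\approx1$; hence $(u-1)^2\approx(1-\eps)^2$ there, and the fast excursions along $\WW_2^{\rm s}(Q_2^-)$ and $\WW_2^{\rm u}(Q_2^+)$ change $\xi$ only by $\OO(\delta\eps)$, since $\mathrm d\xi_2/\mathrm du_2=\delta\eps/w_2$ in $K_2$ (divide \cref{eq-13c} by \cref{eq-13a} with $r_2=\eps$). The length of the drift interval is read off from \cref{lem:xi1out}: by \cref{eq-70}, $\xi_1^{\rm out}=-1+\tfrac{\sqrt3}2\delta-\tfrac{3\sqrt3}8\delta\eps\ln\eps+\OO(\delta\eps)$, so the flat portion contributes $(1-\eps)^2\big(1-\tfrac{\sqrt3}2\delta\big)+\OO(\eps^{3/2}\ln\eps)$ to $\int_{-1}^0(u-1)^2\,d\xi$. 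The switchback term of \cref{lem:xi1out} is precisely what feeds the logarithmic part of the final error.

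For the transition layer I would pass to $u$ as the independent variable using $\mathrm d\xi/\mathrm du=\xi'/u'=1/w$ from \cref{eq-4a} and \cref{eq-4c}, together with the first integral of \{\cref{eq-4a},\cref{eq-4b}\}: integrating $w\,\mathrm dw=\lambda(u^2-\eps^2)u^{-4}\,\mathrm du$ from the turning point $w=0$, $u=u_m=\eps+o(\eps)$, gives to leading order $w(u)^2=2\lambda\big(\tfrac{2}{3\eps}-\tfrac1u+\tfrac{\eps^2}{3u^3}\big)$, which is exactly \cref{eq-27} after the rescalings \cref{eq-6} and \cref{delta}. Since $|w(u)|=\tfrac{2}{\sqrt3\,\delta}\big(1+\OO(\eps/u)\big)$ for $u=\OO(1)$, the layer contributes
\[
\int_{-1}^{\xi_1^{\rm out}}(u-1)^2\,d\xi=\int_{\OO(\eps)}^1\frac{(u-1)^2}{|w(u)|}\,\mathrm du=\frac{\sqrt3}{2}\delta\int_0^1(u-1)^2\,\mathrm du+\OO(\eps^{3/2}\ln\eps)=\frac{\sqrt3}{6}\delta+\OO(\eps^{3/2}\ln\eps).
\]
Adding the two pieces, expanding $(1-\eps)^2=1-2\eps+\OO(\eps^2)$, and using $-\tfrac{\sqrt3}2\delta+\tfrac{\sqrt3}6\delta=-\tfrac{\sqrt3}3\delta$, I obtain
\[
\int_{-1}^0(u-1)^2\,d\xi=1-\frac{\sqrt3}3\delta-2\eps+\OO(\eps^{3/2}\ln\eps),
\]
whence $\|u\|_2^2=2\big(1-\tfrac{\sqrt3}3\sqrt{\eps/\lambda}-2\eps+\OO(\eps^{3/2}\ln\eps)\big)$, which is \cref{eq-normu}.

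The main obstacle I anticipate is the careful bookkeeping of the transition-layer integral near the saddle and across the artificial chart boundary $u_2=\sigma^{-1}$ between $K_1$ and $K_2$. The integrand $(u-1)^2/|w(u)|$ develops an inverse-square-root/logarithmic singularity as $u\to\eps$, where $|w|$ vanishes, so one must show that the $\sigma$-dependent cutoff, the $\OO(\eps/u)$ correction to $|w|$, and the deviation $u_m-\eps$ together generate only terms of order $\OO(\delta\eps\ln\eps)=\OO(\eps^{3/2}\ln\eps)$, and that all $\sigma$-dependence cancels between the $K_1$ and $K_2$ contributions (the $\ln\sigma$ from the lower cutoff in $K_1$ matching that from the fast excursion $u_2=\sigma^{-1}\to1$ in $K_2$). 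Combined with the switchback term of \cref{lem:xi1out}, this pins down the $\OO(\eps^{3/2}\ln\eps)$ error. Finally, I would note that the estimates apply to the genuine solutions for $\eps>0$, not merely to the singular limit, since \cref{prop-1} guarantees $\OO(\eps)$-closeness to $\Gamma$ in each chart, so all the above integrals carry over with uniform errors.
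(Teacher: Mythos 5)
Your proposal is correct and follows essentially the same route as the paper's proof: symmetry reduction to $[-1,0]$, a split at the chart boundary $\Sigma_1^{\rm out}$ into a transition-layer contribution (chart $K_1$, computed by passing to $u$ as independent variable and using the explicit first integral, {\it i.e.}, \cref{eq-26}/\cref{eq-27}, giving $\tfrac{\sqrt3}6\delta$) and a flat-portion contribution (chart $K_2$, slow drift along $\SS_2^{r_2}$ with $u\approx\eps$ and length fixed by the $\xi_1^{\rm out}$-asymptotics), with the same error bookkeeping $\OO(\delta\eps\ln\eps)=\OO(\eps^{3/2}\ln\eps)$. The only cosmetic difference is that the paper first expands $\Vert u\Vert_2^2=2-2\Vert\tilde u\Vert_1+\Vert\tilde u\Vert_2^2$ via \cref{eq-shiftu} and estimates the $L^1$- and $L^2$-integrals of the shifted variable separately in the two charts, whereas you keep $(u-1)^2$ intact; the geometric inputs and the resulting leading terms are identical.
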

\begin{proof}
We first express $\Vert u\Vert_2^2$, with $u$ being the original variable considered in \Cref{eq-2}, in terms of our shifted variable $\tilde u$, as defined in \Cref{eq-shiftu}:
\begin{equation}\label{eq-normutil}
  \Vert u \Vert_2^2 = 2-2 \Vert \tilde u \Vert_1 + \Vert \tilde u \Vert_2^2.
\end{equation}
(While we had omitted the tilde in our notation following \cref{eq-shiftu}, we now include it again  for the sake of clarity.) Due to the symmetry of the boundary value problem \{\cref{eq-7},\cref{eq-5b}\}, we can focus our attention on the interval $[-1,0]$; correspondingly, we split the integrals occurring in \Cref{eq-normutil} into two parts, which are divided by the section $\Sigma_1^{\rm out}$ defined in \cref{eq-20b}. Since $\xi_1^{\rm out}=\xi_2^{\rm in}$, that split implies $[-1,0]=[-1,\xi_1^{\rm out}]\cup[\xi_2^{\rm in},0]$ and, hence, that these integrals can be investigated separately in charts $K_1$ and $K_2$:
 \begin{subequations}\label{eq-norm}
 \begin{align}
 \begin{split}
  \Vert \tilde u \Vert_1 &= \int_{-1}^1 \tilde u(\xi)\,\mathrm{d}\xi = 2 \int_{-1}^0 \tilde u(\xi)\,\mathrm{d}\xi   \\
  &= 2 \Bigg( \int_{-1}^{\xi_1^{\mathrm{out}}} r_1(\xi_1)\,\mathrm{d}\xi_1+\int_{\xi_2^{\mathrm{in}}}^0 r_2(\xi_2) u_2(\xi_2)\,\mathrm{d}\xi_2 \Bigg) \quad\text{and}
  \end{split} \label{eq-norm1} \\
  \begin{split}
  \Vert \tilde u \Vert_2^2 &= \int_{-1}^1 \tilde u^2(\xi)\,\mathrm{d}\xi = 2 \int_{-1}^0 \tilde u^2(\xi)\,\mathrm{d}\xi \\
  &= 2 \Bigg( \int_{-1}^{\xi_1^{\mathrm{out}}} r_1^2(\xi_1)\,\mathrm{d}\xi_1+\int_{\xi_2^{\mathrm{in}}}^0 (r_2(\xi_2) u_2(\xi_2))^2\,\mathrm{d}\xi_2 \Bigg),
  \end{split}  \label{eq-norm2}
 \end{align}
 \end{subequations}
where $\xi_1^{\mathrm{out}}(=\xi_1^{{\rm out}-})$ is approximated as in \Cref{eq-112}. Dividing \Cref{eq-11c} by \Cref{eq-11a} and using \cref{eq-26}, where we recall that $\eps_1=\frac{\eps}{r_1}$, we can rewrite the $\xi_1$-integrals in \Cref{eq-norm} as integrals in $r_1$, with $r_1\in[1,\frac\eps\sigma]$. 
% From \cref{prop-2} we obtained that that $w=-\frac2{\sqrt3} + \frac{2 \sqrt2}3.$
Expanding the resulting integrands for small $\eps$ and evaluating the integrals to the corresponding order, we obtain
\begin{align*}
\int_{-1}^{\xi_1^{\mathrm{out}}} r_1(\xi_1)\,\mathrm{d}\xi_1=\frac{\sqrt3}2 \sqrt{\frac{\eps}{\lambda}} + \OO(\eps^{\frac32})\quad\text{and}\quad\int_{-1}^{\xi_1^{\mathrm{out}}} r_1^2(\xi_1)\,\mathrm{d}\xi_1=\frac{\sqrt3}3 \sqrt{\frac{\eps}{\lambda}} + \OO(\eps^2).
\end{align*}
As for the integrals in $\xi_2$, we recall from \cref{eq-9b} that $r_2=\eps$ in chart $K_2$. Moreover, given the fast-slow structure of \Cref{eq-13}, $u_2$ can be expressed as the sum of a slow and a fast component,
\begin{align*}
 u_2(\xi_2)=1+\check u_2(\tfrac{\xi_2}\eps);
\end{align*}
by the definition of the slow manifold $\SS_2^\eps$ in \cref{lem-1}, the slow contribution is given by $u_2(\xi_2)\sim1$, while the fast contribution $\check u_2$ is obtained from the corresponding stable foliation $\FF_2^{\rm s}(\SS_2^\eps)$. In particular, the latter yields higher-order terms in the $\xi_2$-integrals in \cref{eq-norm}, which implies
\begin{align*}
\int_{\xi_2^{\mathrm{in}}}^0 r_2(\xi_2) u_2(\xi_2)\,\mathrm{d}\xi_2=2 \eps + \OO(\eps^{\frac32}\ln \eps)\quad\text{and}\quad\int_{\xi_2^{\mathrm{in}}}^0 \big(r_2(\xi_2) u_2(\xi_2)\big)^2\,\mathrm{d}\xi_2=\OO(\eps^2).
\end{align*}
Combining these estimates into \Cref{eq-normutil}, we obtain
 \begin{align*}
 \Vert u\Vert_2^2 &=2-2 \bigg(\frac{\sqrt3}2 \sqrt{\frac{\eps}{\lambda}}+2 \eps+\OO(\eps^{\frac32} \ln\eps)\bigg)+\frac{\sqrt3}3 \sqrt{\frac{\eps}{\lambda}}+\OO(\eps^2) \\
 &= 2\bigg(1-\frac{\sqrt3}{3}\sqrt{\frac{\eps}{\lambda}}-2\eps+\OO(\eps^{\frac32}\ln\eps)\bigg),
\end{align*}
which is precisely \Cref{eq-normu}.
\end{proof}
\cref{thm-1} is hence proven.
\begin{remark}\label{rem-slope}
Our analysis suggests that the expansion for the upper solution branch in \Cref{eq-normu} is still valid up to an $\OO(\eps)$-neighborhood of the fold point at $\lambda_\ast(\eps)$; that expansion hence provides a good approximation close to the point where the middle and upper branches in \cref{fig:Lin:a} meet. Differentiating \Cref{eq-normu} with respect to $\lambda$, evaluating the derivative at \mbox{$\lambda=\lambda_\ast(\eps)$}, as given in \Cref{eq-42}, and expanding for $\eps$ small, we obtain
\begin{equation}
\frac{\mathrm{d}\Vert u \Vert_2^2}{\rm{d}\lambda}\bigg|_{\lambda=\lambda_\ast(\eps)} = \frac{8}{9 \eps}+ \frac29 \big(9+4 \sqrt6\big)\ln\eps + \frac{5}{36}\big(59+24 \sqrt6\big)\eps(\ln \eps)^2 +  \OO(\eps^2),
\end{equation}
 which tends to infinity for $\eps\to 0^+$.
\end{remark}

%----------------------------------------------------------------------------
\section{Discussion and Outlook}\label{sec:diou}
%----------------------------------------------------------------------------

In this article, we have investigated stationary solutions of a regularized model
for Micro-Electro Mechanical Systems (MEMS). 
%second-order parabolic \textcolor{green}{partial differential equation},
%\Cref{eq-2}, 
%that arises in the modelling of Micro-Electro Mechanical Systems (MEMS). 
In particular, we have unveiled the asymptotics of the bifurcation diagram 
for solutions of the boundary value problem \{\cref{eq-7},\cref{eq-5b}\},
as the regularization parameter $\eps$ tends to zero.
In the process, we have proven that the new branch of solutions which emerges in the 
bifurcation diagram of the regularized model derives from an underlying, very degenerate singular structure.
Applying tools from dynamical systems theory and, specifically, geometric singular perturbation theory and the blow-up method, we have considered 
separately three principal regions in the bifurcation diagram; cf.~\cref{fig:bdsegm}. 
We emphasize that our findings are consistent with formal asymptotics and numerical simulations 
of Lindsay {\it et al.}; see, in particular, Section~3 of \cite{Li14} and
Section~4 of \cite{Li16}.

One of the most interesting features of the regularized model considered here is the presence of a 
highly singular saddle-node bifurcation point. While Lindsay {\it et al.}~\cite{Li14}
were able to derive a formal leading-order asymptotic expansion in the regularization 
parameter at that point, the coefficients therein had remained undetermined thus far. 
Our approach, on the other hand, allows us to obtain the fold point as the minimum of 
an appropriately defined bifurcation equation and, hence, to calculate explicitly the coefficients in 
that expansion. (For completeness, we remark that the coefficient of the leading-order term therein
appeared in \cite[Section~3]{Li14} in a different context: $\lambda_{0c}=\frac{m-1}{2(m-2)}$, which evaluates to $\frac34$ for $m=4$; see also \Cref{rem:xc}. However, that correspondence does not seem to have been noted there.) 
For verification, a comparison with numerical data obtained with the 
continuation package~\texttt{AUTO} has been performed, showing very good agreement with 
our asymptotic expansion.

Finally, we have shown that the somewhat unexpected asymptotics of solutions to \Cref{eq-2}, as derived in~\cite{Li14}, arises naturally due to a resonance phenomenon in the blown-up vector field. In particular, we have justified the occurrence of logarithmic ``switchback" in that asymptotics via a careful description of the flow through one of the coordinate charts, viz. $K_1$, after blow-up; see also \cite{Po05}. 
Our analysis hence establishes a further connection between the geometric approach proposed here and the method of matched asymptotic expansions.

Our geometric approach to the boundary value problem \{\cref{eq-7},\cref{eq-5b}\}
can be extended to the analysis of steady states of the corresponding regularized fourth-order model, which 
has been studied in~\cite{Li14,Li16,Li15} both asymptotically and numerically.
A future aim is to establish analogous results for that case. Another possible topic for future research is
the geometric analysis of \Cref{eq-2} in higher dimensions, possibly under the simplifying
assumption of radial symmetry.

\section*{Acknowledgments}
AI and PS would like to thank Alan Lindsay for helpful discussions. They would also like to
acknowledge the Fonds zur F\"orderung der wissenschaftlichen Forschung (FWF) for support via the doctoral school ``Dissipation and Dispersion in Nonlinear PDEs'' (project number W1245).
Moreover, AI is grateful to the School of Mathematics at the University of Edinburgh for its hospitality during an extensive research visit. Finally, the authors thank two anonymous referees for insightful comments that greatly improved the original manuscript.

\bibliographystyle{siamplain}
\bibliography{references}
\end{document}